\newtheorem{thm}{Theorem}[section]
\newtheorem{lemma}[thm]{Lemma}
\newtheorem{proposition}[thm]{Proposition}
\newtheorem{definition}[thm]{Definition}
\newcommand{\p}{\mathbb{P}}
\newcommand{\q}{\mathbb{Q}}
\newcommand{\ot}{\mathrm{ot}}
\newcommand{\cf}{\mathrm{cf}}
\newcommand{\cof}{\mathrm{cof}}
\begin{document}

\title{Coherent Adequate Sets and Forcing Square}

\author{John Krueger}

\address{Department of Mathematics \\ 
University of North Texas \\
1155 Union Circle \#311430 \\
Denton, TX 76203}

\email{jkrueger@unt.edu}

\date{}

\thanks{2010 \emph{Mathematics Subject Classification:} 
Primary 03E40; Secondary 03E05.}

\thanks{\emph{Key words and phrases.} Adequate set. 
Coherent adequate set. 
Models as side conditions. Square sequence.}

\begin{abstract}
We introduce the idea of a coherent adequate set of models, which can be 
used as side conditions in forcing. 
As an application we define a forcing poset which adds a square sequence on 
$\omega_2$ using finite conditions.
\end{abstract}

\maketitle

In previous work \cite{krueger1} we introduced the idea of an adequate set of 
models and showed how to use adequate sets as side conditions in forcing 
with finite conditions. 
We gave several examples of forcing with adequate sets, 
including forcing posets for adding a generic function on $\omega_2$, 
adding a nonreflecting stationary subset of $\omega_2$, adding a 
Kurepa tree on $\omega_1$, and in \cite{krueger2} adding a club to a fat 
stationary subset of $\omega_2$. 
The main result of the present paper is to define a forcing poset using 
adequate sets which adds a $\Box_{\omega_1}$-sequence.

The idea of using models as side conditions in forcing goes back to 
{Todor\v cevi\' c} \cite{todor1}, where the method was applied to add generic 
objects of size $\omega_1$ with finite approximations. 
In the original context of applications of PFA, 
the preservation of $\omega_2$ was not necessary. 
To preserve $\omega_2$, {Todor\v cevi\' c} introduced the 
requirement of a system of isomorphisms on the models in a condition.

In the present paper we introduce the idea of a coherent adequate set of models. 
A coherent adequate set is essentially an adequate set in the sense of \cite{krueger1} 
which also satisfies the existence of a system of isomorphisms in the sense 
of {Todor\v cevi\' c}. 
Combining these two ideas turns out to provide a powerful method for 
forcing with side conditions. 
As an application we define a forcing poset which adds a square sequence on $\omega_2$ with finite conditions.

\bigskip

We assume that the reader is familiar with the basic theory of adequate sets 
as described in Sections 1--3 of \cite{krueger1}. 
Our treatment of coherent adequate sets owes a lot to the presentation of 
nicely arranged families given by Abraham and Cummings \cite{cummings}. 
Forcing a square sequence with finite conditions was first achieved 
by Dolinar and Dzamonja \cite{mirna} 
using the Mitchell style of models as side conditions \cite{mitchell}. 
An important difference is that 
the clubs which appear in the square sequence added by their forcing poset 
belong to the ground model, whereas for us the clubs are themselves 
generically approximated by finite fragments.

\section{Adequate Sets}

In this section we review the material on adequate sets which we will use. 
Throughout the paper we assume that 
$2^\omega = \omega_1$ and $2^{\omega_1} = \omega_2$.

Let $\pi$ be a bijection of $\omega_2$ onto $H(\omega_2)$. 
Fix a set of definable Skolem functions for the structure $(H(\omega_2),\in,\pi)$. 
For any set $a \subseteq \omega_2$, let $Sk(a)$ denote the closure of $a$ under 
these Skolem functions. 
Let $C^*$ be the club set of $\beta < \omega_2$ 
such that $Sk(\beta) \cap \omega_2 = \beta$. 
Let $\Lambda := C^* \cap \cof(\omega_1)$. 
Note that any ordinal in $\Lambda$ is also a limit point of $C^*$.

Let $\mathcal X$ be the set of countable $M \subseteq \omega_2$ such that 
$Sk(M) \cap \omega_2 = M$ and for all $\gamma \in M$, 
$\sup(C^* \cap \gamma) \in M$. 
Note that $\mathcal X$ is a club subset of $P_{\omega_1}(\omega_2)$. 
If $M \in \mathcal X$ then $Sk(M) = \pi[M]$. 
It follows that if $M$ and $N$ are in $\mathcal X$ and 
$N \in Sk(M)$, then $Sk(N) \in Sk(M)$. 
If $a$ and $b$ are in $\mathcal X \cup \Lambda$, then 
$Sk(a) \cap Sk(b) = Sk(a \cap b)$ (see Lemma 1.4 of \cite{krueger1}). 
This implies that if $M \in \mathcal X$ and $\beta \in \Lambda$, 
then $M \cap \beta \in \mathcal X$.

If $M \in \mathcal X$ and $\beta \in \Lambda \cap \sup(M)$, 
then $\min(M \setminus \beta)$ is in $\Lambda$. 
Clearly $\min(M \setminus \beta)$ has cofinality $\omega_1$. 
If this ordinal is not in $\Lambda$, then it is not a limit point of $C^*$. 
Also $\beta \ne \min(M \setminus \beta)$, so 
$\sup(M \cap \beta) < \beta < \min(M \setminus \beta)$. 
Hence $\sup(C^* \cap \min(M \setminus \beta))$ is below 
$\min(M \setminus \beta)$ and is in $M$ by the definition of $\mathcal X$. 
In particular this supremum is below $\beta$. 
This is a contradiction since $\beta$ is in $C^*$.

Let $M$ be in $\mathcal X$. 
A set $K$ is an \emph{initial segment} of $M$ if either $K = M$ or 
there exists $\beta \in M \cap \Lambda$ such that 
$K = M \cap \beta$. 
So any initial segment of $M$ is also in $\mathcal X$. 
If $M$ and $N$ are in $\mathcal X$ and $N \in Sk(M)$, 
then since $N$ has only countably many initial segments, 
they are all members of $Sk(M)$.

Since $2^{\omega} = \omega_1$, for all $\beta \in \Lambda$, 
$\mathcal X \cap P(\beta) \subseteq Sk(\beta)$. 
For since $\cf(\beta) = \omega_1$, 
every member of $\mathcal X \cap P(\beta)$ belongs to 
$P_{\omega_1}(\gamma)$ for some $\gamma < \beta$. 
And since $\omega_1 \subseteq Sk(\beta)$, 
$P_{\omega_1}(\gamma) \subseteq Sk(\beta)$. 
In particular, if $M \in \mathcal X$ and $\beta \in \Lambda$ 
then $M \cap \beta \in Sk(\beta)$.

For a set $M \in \mathcal X$, let $\Lambda_M$ denote the set of 
$\beta \in \Lambda$ such that 
$$
\beta = \min(\Lambda \setminus \sup(M \cap \beta)).
$$
In other words, $\beta \in \Lambda_M$ if $\beta \in \Lambda$ and 
there are no elements of 
$\Lambda$ strictly between $\sup(M \cap \beta)$ and $\beta$. 
For $M$ and $N$ in $\mathcal X$, $\Lambda_M \cap \Lambda_N$ 
has a largest element (see Lemma 2.4 of \cite{krueger1}). 
We denote this largest element by $\beta_{M,N}$, which is called the 
\emph{comparison point} of $M$ and $N$.
An important property of the comparison point is the following:
$$
(M \cup \lim(M)) \cap (N \cup \lim(N)) \subseteq \beta_{M,N}
$$
(see Proposition 2.6 of \cite{krueger1}).

\begin{definition}
A set $A \subseteq \mathcal X$ is \emph{adequate} if for all $M$ and $N$ 
in $A$, either $M \cap \beta_{M,N} \in Sk(N)$, 
$N \cap \beta_{M,N} \in Sk(M)$, or 
$M \cap \beta_{M,N} = N \cap \beta_{M,N}$. 
\end{definition}

Note that a set $A \subseteq \mathcal X$ 
is adequate iff for all $M$ and $N$ in $A$, 
$\{ M, N \}$ is adequate. 
If $\{ M, N \}$ is adequate, then $M \cap \beta_{M,N} \in Sk(N)$ iff 
$M \cap \omega_1 < N \cap \omega_1$, and 
$M \cap \beta_{M,N} = N \cap \beta_{M,N}$ 
iff $M \cap \omega_1 = N \cap \omega_1$.

Suppose that $\{ M, N \}$ is adequate. 
If $M \cap \beta_{M,N} = N \cap \beta_{M,N}$, then 
$M \cap N = M \cap \beta_{M,N}$. 
And if $M \cap \beta_{M,N} \in Sk(N)$, then 
$M \cap N = M \cap \beta_{M,N}$.

The next lemma records some important technical facts about 
comparison points which are used frequently.  
The proofs of these facts can be found in Section 3 of \cite{krueger1}.

\begin{lemma}
The following statements hold:
\begin{enumerate}
\item Let $M \in \mathcal X$, $\beta \in \Lambda$, 
and suppose $M \subseteq \beta$. 
Then for all $N \in \mathcal X$, $\beta_{M,N} \le \beta$.
\item Let $K, M, N \in \mathcal X$, and suppose $M \subseteq N$. 
Then $\beta_{K,M} \le \beta_{K,N}$.
\item Let $M$ and $N$ be in $\mathcal X$ and $\beta \in \Lambda$. 
If $\beta_{M,N} \le \beta$, then 
$\beta_{M,N} = \beta_{M \cap \beta,N}$.
\item Let $M$ and $N$ be in $\mathcal X$ and $\beta \in \Lambda$. 
If $N \subseteq \beta$, then 
$\beta_{M,N} = \beta_{M \cap \beta,N}$.
\end{enumerate}
\end{lemma}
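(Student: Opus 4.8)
The plan is to derive all four statements from a single monotonicity observation about the sets $\Lambda_M$: if $M \subseteq N$ are both in $\mathcal X$, then $\Lambda_M \subseteq \Lambda_N$. To verify this, fix $\beta \in \Lambda_M$, so that $\beta \in \Lambda$ and there is no element of $\Lambda$ in the interval $[\sup(M \cap \beta), \beta)$. Since $M \subseteq N$ we have $\sup(M \cap \beta) \le \sup(N \cap \beta)$, and since $\cf(\beta) = \omega_1$ while $N$ is countable we have $\sup(N \cap \beta) < \beta$; hence $[\sup(N \cap \beta), \beta) \subseteq [\sup(M \cap \beta), \beta)$ also contains no element of $\Lambda$, which says precisely that $\beta = \min(\Lambda \setminus \sup(N \cap \beta))$, i.e. $\beta \in \Lambda_N$.

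For (1), suppose toward a contradiction that $\beta_{M,N} > \beta$ for some $N \in \mathcal X$. Since $M \subseteq \beta$ we have $M \cap \beta_{M,N} = M$ and $\sup M \le \beta < \beta_{M,N}$. But $\beta_{M,N} \in \Lambda_M$ means $\beta_{M,N} = \min(\Lambda \setminus \sup M)$, while $\beta$ is an element of $\Lambda$ lying in $[\sup M, \beta_{M,N})$, a contradiction. For (2), the monotonicity observation gives $\Lambda_M \subseteq \Lambda_N$, hence $\beta_{K,M} \in \Lambda_K \cap \Lambda_M \subseteq \Lambda_K \cap \Lambda_N$; since $\beta_{K,N}$ is by definition the largest element of $\Lambda_K \cap \Lambda_N$, we conclude $\beta_{K,M} \le \beta_{K,N}$.

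For (3), recall that $M \cap \beta \in \mathcal X$ because $\beta \in \Lambda$. Applying (2) to $M \cap \beta \subseteq M$ yields $\beta_{M \cap \beta, N} \le \beta_{M,N}$. For the reverse inequality it suffices to show $\beta_{M,N} \in \Lambda_{M \cap \beta}$, since then $\beta_{M,N} \in \Lambda_{M \cap \beta} \cap \Lambda_N$ and $\beta_{M \cap \beta, N}$ is the largest such element. Setting $\gamma := \beta_{M,N}$, the hypothesis $\gamma \le \beta$ gives $(M \cap \beta) \cap \gamma = M \cap \gamma$, so $\sup((M \cap \beta) \cap \gamma) = \sup(M \cap \gamma)$, and now $\gamma \in \Lambda_M$ immediately gives $\gamma \in \Lambda_{M \cap \beta}$. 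Finally, (4) follows at once: if $N \subseteq \beta$ then (1) applied with the roles of $M$ and $N$ reversed, together with $\beta_{M,N} = \beta_{N,M}$, gives $\beta_{M,N} \le \beta$, and then (3) applies verbatim.

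The only genuine content is the monotonicity of $M \mapsto \Lambda_M$ combined with the characterization of $\beta_{M,N}$ as the maximum of $\Lambda_M \cap \Lambda_N$; everything else is bookkeeping with suprema. The point most worth stating carefully is that $\cf(\beta) = \omega_1$ forces $\sup(M \cap \beta) < \beta$ for countable $M$, which is what makes the interval inclusions above legitimate and rules out degenerate cases; and one should keep in mind that $\beta_{M,N}$ is well defined only because $\Lambda_M \cap \Lambda_N$ has a largest element, which is used implicitly throughout.
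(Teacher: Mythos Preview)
Your argument is correct. The paper itself does not give a proof of this lemma; it simply records these facts and points to Section~3 of \cite{krueger1} for the proofs, so there is no in-paper argument to compare against. Your approach---isolating the monotonicity $\Lambda_M \subseteq \Lambda_N$ whenever $M \subseteq N$ and reading all four statements off the characterization $\beta_{M,N} = \max(\Lambda_M \cap \Lambda_N)$---is exactly the natural one, and each step checks out: the interval inclusion in the monotonicity claim is justified by $\sup(N \cap \beta) < \beta$ (using $\cf(\beta)=\omega_1$ and countability of $N$); the contradiction in (1) is clean since $\beta \in \Lambda \cap [\sup M,\beta_{M,N})$ violates $\beta_{M,N} = \min(\Lambda \setminus \sup M)$; and the reverse inequality in (3) correctly uses $(M \cap \beta) \cap \gamma = M \cap \gamma$ from $\gamma \le \beta$ to transfer membership in $\Lambda_M$ to $\Lambda_{M\cap\beta}$. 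One minor presentational point: when you invoke (2) in the proof of (3), you are using it in the form $\beta_{N,M\cap\beta} \le \beta_{N,M}$ (with $K=N$), relying implicitly on the symmetry $\beta_{M,N}=\beta_{N,M}$; it would do no harm to say so explicitly, as you do later for (4).
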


Another important fact is that if $\{ M, N \}$ is adequate and $\beta \in \Lambda$, 
then $\{ M \cap \beta, N \cap \beta \}$ is adequate 
(see Lemma 3.3 of \cite{krueger1}).

\begin{lemma}
If $\{ M \cap \beta_{M,N}, N \cap \beta_{M,N} \}$ is adequate, then so is 
$\{ M, N \}$.
\end{lemma}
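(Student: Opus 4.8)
The plan is to reduce the adequacy of $\{M, N\}$ to the hypothesis by showing that $\{M, N\}$ and $\{M \cap \beta_{M,N}, N \cap \beta_{M,N}\}$ have the same comparison point, and then transferring each of the three clauses in the definition of adequacy upward along the inclusions $M \cap \beta_{M,N} \subseteq M$ and $N \cap \beta_{M,N} \subseteq N$ using monotonicity of the Skolem hull.

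First I would set $\beta := \beta_{M,N}$ and observe that $\beta \in \Lambda$, so that $M \cap \beta$ and $N \cap \beta$ both lie in $\mathcal X$ and $\beta_{M \cap \beta, N \cap \beta}$ is defined. The key claim is $\beta_{M \cap \beta, N \cap \beta} = \beta$, which I would get from two applications of part (3) of the previous lemma. Applying that part to $M$, $N$ with threshold $\beta$ (legitimate since $\beta_{M,N} = \beta \le \beta$) yields $\beta_{M \cap \beta, N} = \beta$. Then, since the comparison-point operator is symmetric, $\beta_{N, M \cap \beta} = \beta \le \beta$, so a second application of part (3), this time to $N$, $M \cap \beta$ with threshold $\beta$ (the lemma intersects its first argument), yields $\beta_{N \cap \beta, M \cap \beta} = \beta$; that is, $\beta_{M \cap \beta, N \cap \beta} = \beta$.

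With the comparison points identified, I would unpack the hypothesis: since $\{M \cap \beta, N \cap \beta\}$ is adequate and its comparison point is $\beta$, one of $(M \cap \beta) \cap \beta \in Sk(N \cap \beta)$, $(N \cap \beta) \cap \beta \in Sk(M \cap \beta)$, or $(M \cap \beta) \cap \beta = (N \cap \beta) \cap \beta$ holds. Using $(M \cap \beta) \cap \beta = M \cap \beta$ and similarly for $N$, these become $M \cap \beta \in Sk(N \cap \beta)$, $N \cap \beta \in Sk(M \cap \beta)$, or $M \cap \beta = N \cap \beta$. The last case is literally one of the clauses of adequacy for $\{M, N\}$. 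In the first case, $Sk(N \cap \beta) \subseteq Sk(N)$ by monotonicity of $Sk$, so $M \cap \beta_{M,N} \in Sk(N)$; the second case is symmetric. In every case $\{M, N\}$ satisfies the definition of adequacy, and I would be done.

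I expect the only real content to lie in the equality $\beta_{M \cap \beta, N \cap \beta} = \beta$; everything after that is bookkeeping with the definitions. The small subtlety there is checking the threshold hypothesis of part (3) of the lemma before each use, and invoking the symmetry $\beta_{a,b} = \beta_{b,a}$ so that the lemma, which intersects only its first argument, can also be used to intersect the second.
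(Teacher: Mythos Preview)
Your proof is correct and follows essentially the same approach as the paper: establish $\beta_{M\cap\beta,\,N\cap\beta}=\beta$ and then read off the three adequacy clauses. The only cosmetic difference is that the paper obtains this equality via Lemma~1.2(3) followed by Lemma~1.2(4) (using $M\cap\beta\subseteq\beta$), whereas you use Lemma~1.2(3) twice with symmetry; both work.
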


\begin{proof}
Let $\beta := \beta_{M,N}$. 
Since $\beta \le \beta$, Lemma 1.2(3) implies that 
$\beta = \beta_{M \cap \beta,N}$. 
And as $M \cap \beta \subseteq \beta$, Lemma 1.2(4) implies 
that $\beta_{M \cap \beta,N} = 
\beta_{M \cap \beta,N \cap \beta}$. 
Hence $\beta = 
\beta_{M \cap \beta,N \cap \beta}$. 
In particular, 
$(M \cap \beta) \cap \beta_{M \cap \beta,N \cap \beta} 
= M \cap \beta$ and 
$(N \cap \beta) \cap \beta_{M \cap \beta,N \cap \beta} = 
N \cap \beta$. 
So if $(M \cap \beta) \cap \beta_{M \cap \beta,N \cap \beta} \in 
Sk(N \cap \beta)$, then 
$M \cap \beta \in Sk(N)$, and similarly 
if $(N \cap \beta) \cap \beta_{M \cap \beta,N \cap \beta} \in 
Sk(M \cap \beta)$ then 
$N \cap \beta \in Sk(M)$. 
Also the equality 
$(M \cap \beta) \cap \beta_{M \cap \beta,N \cap \beta} = 
(N \cap \beta) \cap \beta_{M \cap \beta,N \cap \beta}$ is 
equivalent to the equality 
$M \cap \beta = N \cap \beta$.
\end{proof}

\section{Coherent Adequate Sets}

In the basic theory of adequate sets, we identify a set $M$ in $\mathcal X$ 
with $Sk(M)$, and oftentimes with the structure 
$(Sk(M),\in,\pi \cap Sk(M))$, which is an elementary substructure 
of $(H(\omega_2),\in,\pi)$. 
For any set $P \subseteq H(\omega_2)$ and $M \in \mathcal X$, 
let $P_M := P \cap Sk(M)$. 
In the context of coherent adequate sets we are interested in the expanded structure 
$$
\mathfrak M = (Sk(M),\in,\pi_M,{\mathcal X}_M,\Lambda_M).
$$
Note that $\mathfrak M$ is not necessarily an elementary substructure of 
$(H(\omega_2),\in,\pi,\mathcal X,\Lambda)$. 
In general if a set in $\mathcal X$ is denoted with a particular letter, 
we use the Fractur version of the letter to denote the above structure on 
its Skolem hull.

Let $M$ and $N$ be in $\mathcal X$. 
We say that $M$ and $N$ are \emph{isomorphic} if the structures 
$\mathfrak M$ and $\mathfrak N$ are isomorphic. 
In other words, $M$ and $N$ are isomorphic if there exists a bijection 
$\sigma : Sk(M) \to Sk(N)$ such that for all $x$ and $y$ in $Sk(M)$:
\begin{enumerate}
\item $x \in y$ iff $\sigma(x) \in \sigma(y)$;
\item $\pi(x) = y$ iff $\pi(\sigma(x)) = \sigma(y)$;
\item $x \in \mathcal X$ iff $\sigma(x) \in \mathcal X$;
\item $x \in \Lambda$ iff $\sigma(x) \in \Lambda$.
\end{enumerate}
In particular, such a map $\sigma$ is an isomorphism from 
$(Sk(M),\in)$ to $(Sk(N),\in)$. 
Since these structures model the axiom of extensionality, such an isomorphism 
is unique if it exists. 
In that case, 
let $\sigma_{M,N}$ denote the unique isomorphism from $\mathfrak M$ to 
$\mathfrak N$.
Note that if $M$, $N$, and $K$ are isomorphic, 
then $\sigma_{M,N} = \sigma_{K,N} \circ \sigma_{M,K}$.

For $M \in \mathcal X$, let $\overline{\mathfrak M}$ denote the transitive 
collapse of the structure $\mathfrak M$, and let 
$\sigma_M : \mathfrak M \to \overline{\mathfrak M}$ be the 
collapsing map. 
Note that $M$ and $N$ are isomorphic iff 
$\overline{\mathfrak M} = \overline{\mathfrak N}$. 
In that case, by the uniqueness of isomorphisms we have that 
$$
\sigma_{M,N} = \sigma_N^{-1} \circ \sigma_M.
$$

Suppose that $M$ and $N$ are isomorphic and $a \in Sk(M)$ is countable. 
We claim that $\sigma_{M,N}(a) = \sigma_{M,N}[a]$. 
Since $a$ and $\sigma_{M,N}(a)$ are countable, $a \subseteq Sk(M)$ 
and $\sigma_{M,N}(a) \subseteq Sk(N)$. 
Hence $x \in a$ implies $\sigma_{M,N}(x) \in \sigma_{M,N}(a)$, so that 
$\sigma_{M,N}[a] \subseteq \sigma_{M,N}(a)$. 
On the other hand, if $z \in \sigma_{M,N}(a)$, then for some $x \in Sk(M)$, 
$\sigma_{M,N}(x) = z$, which implies that $x \in a$. 
So $z \in \sigma_{M,N}[a]$.

\begin{lemma}
Let $M$ and $N$ be isomorphic and $K \in Sk(M) \cap \mathcal X$. 
Let $K^* = \sigma_{M,N}(K)$. 
Then $\sigma_{M,N}(Sk(K)) = Sk(K^*)$, $K$ and $K^*$ are isomorphic, 
and $\sigma_{M,N} \restriction Sk(K) = \sigma_{K,K^*}$.
\end{lemma}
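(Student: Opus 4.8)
The plan is to work with the transitive collapse characterization established just before the lemma, namely $\sigma_{M,N} = \sigma_N^{-1}\circ\sigma_M$, and to exploit the fact that $\mathfrak M$ thinks $Sk(K)$ equals $\pi[K]$ whenever $K\in\mathcal X$. First I would observe that since $K\in Sk(M)\cap\mathcal X$, the structure $\mathfrak M$ satisfies ``$K\in\mathcal X$''; applying $\sigma_{M,N}$ and using condition (3) in the definition of isomorphism, $\mathfrak N$ satisfies ``$K^*\in\mathcal X$'', so $K^*\in Sk(N)\cap\mathcal X$. In particular $Sk(K^*)$ makes sense. Next, because $K\in\mathcal X$ we have $Sk(K)=\pi[K]$, and this identity is expressible in $\mathfrak M$ using the predicate $\pi_M$: for each $\gamma\in K$, $\pi(\gamma)\in Sk(K)$, and conversely every element of $Sk(K)$ has the form $\pi(\gamma)$ for some $\gamma\in K$. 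I would transfer each such instance across $\sigma_{M,N}$, using condition (2) and the already-proved fact that $\sigma_{M,N}$ commutes with taking images of countable sets, to conclude $\sigma_{M,N}(Sk(K)) = \pi[\sigma_{M,N}[K]] = \pi[K^*] = Sk(K^*)$, where the last equality uses $K^*\in\mathcal X$.

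For the claim that $K$ and $K^*$ are isomorphic, I would show that $\sigma_{M,N}\restriction Sk(K)$ is the required isomorphism from $\mathfrak K$ to $\mathfrak K^*$, and then appeal to the uniqueness of such isomorphisms (which forces it to equal $\sigma_{K,K^*}$, giving the last clause for free). Since $\sigma_{M,N}$ already maps $Sk(K)$ bijectively onto $Sk(K^*)=\sigma_{M,N}(Sk(K))$ by the previous paragraph, and since it trivially preserves $\in$ and $\pi$ (being an isomorphism on the larger structures), the only real content is that it preserves the predicates $\mathcal X$ and $\Lambda$ \emph{relativized to} $Sk(K)$ and $Sk(K^*)$. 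For $\mathcal X$: if $x\in Sk(K)$ and $x\in\mathcal X$, then $x\in\mathcal X\cap Sk(M)$, so $\sigma_{M,N}(x)\in\mathcal X\cap Sk(N)$ by condition (3) for $\sigma_{M,N}$; since also $\sigma_{M,N}(x)\in Sk(K^*)$, this shows $x\in\mathcal X_K$ implies $\sigma_{M,N}(x)\in\mathcal X_{K^*}$, and the converse is symmetric via $\sigma_{M,N}^{-1}=\sigma_{N,M}$. The argument for $\Lambda$ is identical, using condition (4).

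The step I expect to be most delicate is the identity $\sigma_{M,N}(Sk(K))=Sk(K^*)$, specifically making sure that ``$Sk(K)=\pi[K]$'' is genuinely an internal statement of $\mathfrak M$ and not merely a true statement in $V$. The point is that $K\in\mathcal X$, so by the remark in Section~1 that $Sk(a)=\pi[a]$ for $a\in\mathcal X$, and since $K$ is countable hence $K\subseteq Sk(M)$ (so $\pi[K]$ is computed correctly inside $\mathfrak M$ using $\pi_M$), the structure $\mathfrak M$ does compute $Sk(K)$ as $\pi_M[K]$; one must also check that $Sk(K)$ itself, as a set, lies in $Sk(M)$, which follows from $K\in Sk(M)\cap\mathcal X$ and $N\in Sk(M)$ implying $Sk(N)\in Sk(M)$ for members of $\mathcal X$ (the elementarity remark near the definition of $\mathcal X$, applied with $K$ in place of $N$). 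Once this is pinned down, transferring across $\sigma_{M,N}$ is mechanical, and combined with $\sigma_{M,N}[K]=K^*$ (which holds since $K$ is countable, using the previously established fact that $\sigma_{M,N}$ commutes with images of countable sets) and $K^*\in\mathcal X$, the identity drops out.
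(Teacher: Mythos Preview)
Your proposal is correct and follows essentially the same route as the paper: use $Sk(K)=\pi[K]$ together with the fact that $\sigma_{M,N}$ commutes with $\pi$ and with images of countable sets to get $\sigma_{M,N}(Sk(K))=\pi[K^*]=Sk(K^*)$, then observe that the restriction of $\sigma_{M,N}$ to $Sk(K)$ preserves $\in,\pi,\mathcal X,\Lambda$ and invoke uniqueness. Your extra care about $K^*\in\mathcal X$, about $Sk(K)\in Sk(M)$, and about the relativized predicates is fine but not strictly needed---the argument works externally in $V$ without needing ``$Sk(K)=\pi[K]$'' to be internal to $\mathfrak M$, since all you use is that $Sk(K)$ is a countable element of $Sk(M)$ and that $\sigma_{M,N}$ respects $\pi$ pointwise.
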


\begin{proof}
Since $K$ is countable, $K^* = \sigma_{M,N}[K]$. 
For all $\alpha \in K$, we have that 
$\sigma_{M,N}(\pi(\alpha)) = \pi(\sigma_{M,N}(\alpha))$. 
It follows that 
$$
\sigma_{M,N}(Sk(K)) = 
\sigma_{M,N}[Sk(K)] = 
\sigma_{M,N}[\pi[K]] = 
\pi[\sigma_{M,N}[K]] = \pi[K^*] = Sk(K^*).
$$
So $\sigma_{M,N} \restriction Sk(K)$ is a bijection from 
$Sk(K)$ to $Sk(K^*)$, and it clearly preserves the predicates 
$\in$, $\pi$, $\mathcal X$, and $\Lambda$. 
Hence $\sigma_{M,N} \restriction Sk(K)$ is an isomorphism of 
$\mathfrak K$ to $\mathfrak K^*$. 
So $K$ and $K^*$ are isomorphic and $\sigma_{K,K^*} = 
\sigma_{M,N} \restriction Sk(K)$.
\end{proof}

\begin{lemma}
Let $M$ and $N$ be isomorphic, and let $K$ be an initial segment of $M$. 
Let $K^* := \sigma_{M,N}[K]$. 
Then $K^*$ is an initial segment of $N$, 
$\sigma_{M,N}[Sk(K)] = Sk(K^*)$, 
$K$ and $K^*$ are isomorphic, 
and $\sigma_{M,N} \restriction Sk(K) = \sigma_{K,K^*}$.
\end{lemma}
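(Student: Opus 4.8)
The plan is to follow the proof of Lemma 2.2, treating the two cases in the definition of ``initial segment'' separately. If $K = M$ then $K^* = \sigma_{M,N}[M] = N$, and all four conclusions are immediate; so assume $K = M \cap \beta$ for some $\beta \in M \cap \Lambda$. Note that here we cannot simply quote Lemma 2.2, since an initial segment of $M$ need not be an element of $Sk(M)$; instead we argue directly, the one genuinely new point being to recognize $K^*$ as an initial segment of $N$.

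First I would observe that the ordinals belonging to $Sk(M)$ are exactly $Sk(M) \cap \omega_2 = M$ (and likewise for $N$), and that $\sigma_{M,N}$, being an isomorphism of $\in$-structures, restricts to the order isomorphism of $M$ onto $N$. Put $\beta^* := \sigma_{M,N}(\beta)$; then $\beta^* \in N$ is an ordinal, and since $\sigma_{M,N}$ preserves the predicate $\Lambda$, in fact $\beta^* \in N \cap \Lambda$. As $\sigma_{M,N} \restriction M$ is order preserving it carries $M \cap \beta$ onto $N \cap \beta^*$, so $K^* = \sigma_{M,N}[M \cap \beta] = N \cap \beta^*$ is an initial segment of $N$; in particular $K^* \in \mathcal X$.

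The remaining steps run exactly as in Lemma 2.2. Since $K \subseteq M \subseteq Sk(M)$ and $Sk(M)$ is closed under the Skolem functions, $Sk(K) \subseteq Sk(M)$, so $\sigma_{M,N} \restriction Sk(K)$ is defined; using $Sk(K) = \pi[K]$ (as $K \in \mathcal X$), $Sk(K^*) = \pi[K^*]$ (as $K^*$ is an initial segment of $N$, hence in $\mathcal X$), and the fact that $\sigma_{M,N}$ commutes with $\pi$ on $Sk(M)$, one computes
$$
\sigma_{M,N}[Sk(K)] = \sigma_{M,N}[\pi[K]] = \pi[\sigma_{M,N}[K]] = \pi[K^*] = Sk(K^*).
$$
Thus $\sigma_{M,N} \restriction Sk(K)$ is a bijection of $Sk(K)$ onto $Sk(K^*)$, and being a restriction of $\sigma_{M,N}$ it preserves $\in$, $\pi$, $\mathcal X$, and $\Lambda$; hence it is an isomorphism of $\mathfrak K$ onto $\mathfrak K^*$. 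Therefore $K$ and $K^*$ are isomorphic, and by uniqueness of such isomorphisms $\sigma_{K,K^*} = \sigma_{M,N} \restriction Sk(K)$. The only real obstacle is the initial case analysis together with the verification that $\beta^* \in \Lambda$ and $K^* = N \cap \beta^*$; once that is settled, the rest is essentially a transcription of the previous lemma.
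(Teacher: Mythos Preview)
Your proof is correct and follows essentially the same approach as the paper: dispose of the case $K=M$, then for $K=M\cap\beta$ set $\beta^*=\sigma_{M,N}(\beta)\in N\cap\Lambda$, identify $K^*=N\cap\beta^*$, and rerun the $Sk(K)=\pi[K]$ computation from the preceding lemma. You supply more detail than the paper (which just says ``easily $K^*=N\cap\sigma_{M,N}(\beta)$'' and ``by the argument from the previous lemma''), but the route is the same; note only that your references to ``Lemma 2.2'' should point to the preceding lemma (Lemma 2.1 in the paper's numbering).
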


\begin{proof}
This is clear if $M = K$. 
Otherwise there is $\beta \in M \cap \Lambda$ such that 
$K = M \cap \beta$. 
Then $\sigma_{M,N}(\beta) \in N \cap \Lambda$, and easily 
$K^* = N \cap \sigma_{M,N}(\beta)$. 
By the argument from the previous lemma, 
$\sigma_{M,N}[Sk(K)] = Sk(\sigma_{M,N}[K]) = Sk(K^*)$, and 
$\sigma_{M,N} \restriction Sk(K)$ is an isomorphism of $Sk(K)$ 
to $Sk(K^*)$. 
Hence $K$ and $K^*$ are isomorphic and 
$\sigma_{M,N} \restriction Sk(K) = \sigma_{K,K^*}$.
\end{proof}

Suppose that $M \cap \beta_{M,N} = N \cap \beta_{M,N}$ and $M$ and 
$N$ are isomorphic.  
Applying the previous lemma, 
$\sigma_{M,N} \restriction (M \cap \beta_{M,N})$ 
is an isomorphism of $M \cap \beta_{M,N}$ to the initial 
segment $\sigma_{M,N}[M \cap \beta_{M,N}]$ of $N$. 
But the latter initial segment has the same order type as the initial segment 
$N \cap \beta_{M,N}$, 
so it is equal to it. 
Hence $\sigma_{M,N} \restriction Sk(M \cap \beta_{M,N})$ 
is an isomorphism of $Sk(M \cap \beta_{M,N})$ to itself, and therefore it is 
the identity map. 
But $M \cap \beta_{M,N} = M \cap N$. 
In particular, we have proven the following lemma.

\begin{lemma}
Let $\{ M, N \}$ be adequate, where $M$ and $N$ are isomorphic and 
$M \cap \beta_{M,N} = N \cap \beta_{M,N}$. 
Then $\sigma_{M,N} \restriction Sk(M \cap N)$ is the identity function.
\end{lemma}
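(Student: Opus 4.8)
The plan is to restrict $\sigma_{M,N}$ to the Skolem hull of $M \cap N$ and to recognise this restriction as an automorphism of a well-founded extensional structure, which is therefore the identity. Write $\beta := \beta_{M,N}$ and $K := M \cap \beta = N \cap \beta$. First I would recall from Section 1 that, since $\{M,N\}$ is adequate and $M \cap \beta = N \cap \beta$, we have $M \cap N = K$; thus it suffices to show that $\sigma_{M,N} \restriction Sk(K)$ is the identity. Next I would check that $K$ is an initial segment of $M$: since $\beta$ has cofinality $\omega_1$ and $M$ is countable, $\beta \ne \sup(M)$, so either $\beta > \sup(M)$ and $K = M$, or $\beta < \sup(M)$ and, by the fact from Section 1 that $\min(M \setminus \beta) \in \Lambda$ whenever $\beta \in \Lambda \cap \sup(M)$, the ordinal $\gamma := \min(M \setminus \beta)$ lies in $M \cap \Lambda$; since $M$ meets no ordinal in $[\beta,\gamma)$ we get $K = M \cap \gamma$.

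Now I would apply the preceding lemma (the version for initial segments) to $K$ as an initial segment of $M$. With $K^* := \sigma_{M,N}[K]$ this yields that $K^*$ is an initial segment of $N$, that $\sigma_{M,N}[Sk(K)] = Sk(K^*)$, and that $\sigma_{M,N} \restriction Sk(K) = \sigma_{K,K^*}$. Because $\sigma_{M,N}$ preserves $\in$, its restriction to the ordinals of $Sk(M)$ is order preserving, so $K^*$ has the same order type as $K = N \cap \beta$. But $K^*$ and $N \cap \beta$ are both downward closed subsets of the well-ordered set $N$, and a downward closed subset of a well-order is determined by its order type; hence $K^* = N \cap \beta = K$. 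Therefore $\sigma_{M,N} \restriction Sk(K)$ is an isomorphism of $\mathfrak K$ onto itself, and since $(Sk(K),\in)$ is well-founded and satisfies extensionality, the only such automorphism is the identity. As $M \cap N = K$, this gives that $\sigma_{M,N} \restriction Sk(M \cap N)$ is the identity function.

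I expect the only real work here to be bookkeeping: verifying that $K$ is genuinely an initial segment of $M$ so that the preceding lemma applies, and that equality of order types upgrades to the literal equality $K^* = K$ rather than a mere order isomorphism. Granting those, the closing step — that an automorphism of a well-founded extensional structure is trivial — is entirely standard.
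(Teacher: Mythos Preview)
Your proposal is correct and follows essentially the same approach as the paper's proof: apply Lemma~2.2 to the initial segment $K = M \cap \beta_{M,N}$, observe that its image $K^*$ is an initial segment of $N$ with the same order type as $N \cap \beta_{M,N} = K$, deduce $K^* = K$, and conclude that the restriction is an automorphism of a well-founded extensional structure and hence the identity. The only difference is that you spell out explicitly why $K$ is an initial segment of $M$ (via $\gamma = \min(M \setminus \beta) \in M \cap \Lambda$), whereas the paper invokes Lemma~2.2 without comment; your extra care here is justified and the argument is sound (note that $M$, being closed under the successor function, has no maximum, so indeed $\sup(M)$ has countable cofinality and $\beta \ne \sup(M)$).
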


We now introduce the most important idea of the paper.

\begin{definition}
Let $A \subseteq \mathcal X$. 
Then $A$ is a \emph{coherent adequate set} if $A$ is adequate and for all 
$M$ and $N$ in $A$:
\begin{enumerate}
\item if $M \cap \beta_{M,N} = 
N \cap \beta_{M,N}$, then $M$ and $N$ are isomorphic;
\item if $M \cap \beta_{M,N} \in Sk(N)$, then there exists $N'$ in $A$ 
such that $M \in Sk(N')$ and $N$ and $N'$ are isomorphic;
\item if $M \cap \beta_{M,N} = N \cap \beta_{M,N}$ and 
$K \in A \cap Sk(M)$, then $\sigma_{M,N}(K) \in A$.
\end{enumerate}
\end{definition}

Recall that if $A$ is adequate and $M$ and $N$ are in $A$, then 
$M \cap \beta_{M,N} \in Sk(N)$ iff $M \cap \omega_1 < N \cap \omega_1$, 
and $M \cap \beta_{M,N} = N \cap \beta_{M,N}$ iff 
$M \cap \omega_1 = N \cap \omega_1$. 
It follows that a finite adequate set $A$ is coherent iff the set 
$\{ Sk(M) : M \in A \}$ is a nicely arranged family in the sense of 
Definition 3.3 of \cite{cummings}. 

Also note that if $M$ and $N$ are in $\mathcal X$ and are isomorphic, then 
$M \cap \omega_1 = N \cap \omega_1$. 
For in that case $\sigma_{M,N}(\omega_1) = \omega_1$, and therefore 
$\sigma_{M,N}[M \cap \omega_1] = N \cap \omega_1$. 
But this implies that 
$M \cap \omega_1$ and $N \cap \omega_1$ have the same order type 
and thus are the same ordinal. 
Consequently the following are equivalent for $M$ and $N$ in a coherent 
adequate set: (1) $M \cap \omega_1 = N \cap \omega_1$; 
(2) $M \cap \beta_{M,N} = N \cap \beta_{M,N}$; 
(3) $M$ and $N$ are isomorphic.

\begin{lemma}
Let $A$ be a coherent adequate set. 
Let $M$ and $K$ be in $A$. 
If $K \cap \beta_{K,M} \in Sk(M)$, then 
there is $K^*$ in $A \cap Sk(M)$ such that 
$K$ and $K^*$ are isomorphic and 
$K \cap \beta_{K,M} = K^* \cap \beta_{K,M}$. 
\end{lemma}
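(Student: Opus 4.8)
The plan is to obtain $K^*$ as the transferred copy of $K$ inside $Sk(M)$. Since $K \cap \beta_{K,M} \in Sk(M)$, clause (2) of the definition of a coherent adequate set (applied with $K$ and $M$ in the roles of $M$ and $N$ there) yields a model $M' \in A$ with $K \in Sk(M')$ such that $M$ and $M'$ are isomorphic. If $M' = M$ then $K \in Sk(M)$ already and $K^* := K$ works, so I would assume $M \neq M'$, set $\sigma := \sigma_{M',M}$, and put $K^* := \sigma(K)$; since $K$ is countable, $K^* = \sigma[K] \subseteq Sk(M)$. To see $K^* \in A$ I would invoke clause (3): as $M$ and $M'$ are isomorphic members of the coherent adequate set $A$, we have $M' \cap \beta_{M',M} = M \cap \beta_{M',M}$, and $K \in A \cap Sk(M')$, so clause (3) gives $\sigma_{M',M}(K) \in A$. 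That $K$ and $K^*$ are isomorphic then follows from Lemma 2.1 applied to the isomorphic pair $M'$, $M$ and the model $K \in Sk(M') \cap \mathcal X$, which also yields $\sigma \restriction Sk(K) = \sigma_{K,K^*}$. This settles two of the three requirements.

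The remaining requirement, $K \cap \beta_{K,M} = K^* \cap \beta_{K,M}$, is the heart of the matter. Write $\beta := \beta_{M,M'}$. By Lemma 2.3, applied to the adequate isomorphic pair $M'$, $M$, the map $\sigma$ is the identity on $Sk(M' \cap M)$, and $M' \cap M = M \cap \beta$; moreover, since $\{K, M\}$ is adequate and $K \cap \beta_{K,M} \in Sk(M)$, we have $K \cap \beta_{K,M} = K \cap M$. Because $K \subseteq M'$, it follows that $K \cap M \subseteq M' \cap M = M \cap \beta$, so $\sigma$ fixes $K \cap \beta_{K,M}$ pointwise, giving $K \cap \beta_{K,M} = \sigma[K \cap \beta_{K,M}] \subseteq \sigma[K] = K^*$; this is one inclusion. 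For the other inclusion I would first establish that $\beta_{K,M} \le \beta$: we have $\sup(K \cap \beta_{K,M}) = \sup(K \cap M) \le \sup(M \cap \beta)$, which is strictly below $\beta$ since $\beta$ has cofinality $\omega_1$ while $M \cap \beta$ is countable, and since $\beta_{K,M} \in \Lambda_K$ — i.e. $\beta_{K,M} = \min(\Lambda \setminus \sup(K \cap \beta_{K,M}))$ — and $\beta \in \Lambda$, this forces $\beta_{K,M} \le \beta$. Then, given $\xi \in K^* \cap \beta_{K,M}$, I would write $\xi = \sigma(\alpha)$ with $\alpha \in K \subseteq M'$: if $\alpha < \beta$ then $\alpha \in M' \cap \beta = M \cap M'$, so $\sigma(\alpha) = \alpha$ and $\xi = \alpha \in K$; and if $\alpha \ge \beta$ then, since $\sigma$ restricts to an order isomorphism of $M'$ onto $M$ fixing $M \cap \beta$ pointwise, $\sigma$ carries $M' \setminus \beta$ onto $M \setminus \beta$, so $\xi = \sigma(\alpha) \ge \beta \ge \beta_{K,M}$, which is absurd. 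Hence every $\xi \in K^* \cap \beta_{K,M}$ lies in $K \cap \beta_{K,M}$, and the two inclusions give the desired equality.

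The step I expect to be the main obstacle is exactly the inequality $\beta_{K,M} \le \beta_{M,M'}$: the reverse-inclusion argument collapses without it. It is obtained by tracing the containment $K \cap M \subseteq M \cap \beta_{M,M'}$ (which comes from $K \subseteq M'$ together with the agreement of $M$ and $M'$ below their comparison point) through the characterization of $\beta_{K,M}$ as the least element of $\Lambda$ lying above $\sup(K \cap \beta_{K,M})$.
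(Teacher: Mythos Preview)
Your proof is correct and follows essentially the same route as the paper: obtain $M'$ via clause (2) of Definition 2.4, set $K^* := \sigma_{M',M}(K)$, use clause (3) for $K^* \in A$, Lemma 2.1 for the isomorphism, Lemma 2.3 for the identity on $Sk(M \cap M')$, and then verify $K \cap \beta_{K,M} = K^* \cap \beta_{K,M}$ via $\beta_{K,M} \le \beta_{M,M'}$. The only real difference is that the paper obtains the inequality $\beta_{K,M} \le \beta_{M,M'}$ in one line by invoking Lemma 1.2(2) (monotonicity of comparison points under inclusion, applied to $K \subseteq M'$), whereas you re-derive it from scratch via $\sup(K \cap \beta_{K,M}) < \beta$ and the defining property of $\Lambda_K$; your argument is sound but the cited lemma gives it for free.
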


\begin{proof}
Since $A$ is coherent, there exists $M'$ in $A$ such that 
$K \in Sk(M')$ and $M$ and $M'$ are isomorphic. 
Let $K^* = \sigma_{M',M}(K)$. 
Since $A$ is coherent, $K^* \in A$. 
By Lemma 2.1, $\sigma_{M',M} \restriction Sk(K)$ is an isomorphism of 
$Sk(K)$ to $Sk(K^*)$ and is equal to $\sigma_{K,K^*}$. 
And $\sigma_{M',M}$ is the identity on $M' \cap M = M' \cap \beta_{M,M'} = 
M \cap \beta_{M,M'}$. 
Since $K \subseteq M'$, $\beta_{K,M} \le \beta_{M',M}$.

Since $\sigma_{M',M} \restriction M' \cap \beta_{M',M}$ is the identity, 
$\sigma_{M',M}(K \cap \beta_{K,M}) = 
\sigma_{M',M}[K \cap \beta_{K,M}] = K \cap \beta_{K,M}$. 
Since $\sigma_{M',M} \restriction Sk(K) = \sigma_{K,K^*}$, 
Lemma 2.2 implies that $K \cap \beta_{K,M}$ is an initial segment of $K^*$. 
If $\gamma$ is in $K^* \setminus K$ and $\gamma < \beta_{K,M}$, 
then $\gamma < \beta_{M',M}$ implies that 
$\gamma = \sigma_{M,M'}(\gamma) \in K$ which is a contradiction. 
So $K \cap \beta_{K,M} = K^* \cap \beta_{K,M}$. 
\end{proof}

\begin{lemma}
Suppose that $A$ is a finite coherent adequate set, $N \in \mathcal X$, and 
$A \in Sk(N)$. 
Then $A \cup \{ N \}$ is a coherent adequate set. 
\end{lemma}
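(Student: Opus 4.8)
The plan is to exploit the fact that every member of $A$ actually lies in $Sk(N)$. Since $A$ is finite and $A \in Sk(N)$, elementarity of $Sk(N)$ in $(H(\omega_2),\in,\pi)$ gives a bijection $f : n \to A$ with $n < \omega$ and $f \in Sk(N)$; as each $i < n$ is a definable natural number, $f(i) \in Sk(N)$, so $A \subseteq Sk(N)$. Fix $M \in A$. Then $M \in Sk(N)$, so, $M$ being countable, $M \subseteq Sk(N) \cap \omega_2 = N$, and $M \cap \omega_1 \in Sk(N) \cap \omega_1 = N \cap \omega_1$, so $M \cap \omega_1 < N \cap \omega_1$. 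To identify the comparison point, let $\beta$ be the least element of $\Lambda$ with $\beta \ge \sup(M)$; since $M$ is countable, $\sup(M)$ does not have uncountable cofinality and so $\sup(M) \notin \Lambda$, whence $\beta > \sup(M)$, $M \subseteq \beta$, and $M \cap \beta = M$. One checks directly that $\beta \in \Lambda_M$ (because $\sup(M \cap \beta) = \sup(M)$ and $\beta = \min(\Lambda \setminus \sup(M))$), and also $\beta \in \Lambda_N$: we have $M \subseteq N \cap \beta$, so $\sup(N \cap \beta) \ge \sup(M)$, and by the choice of $\beta$ there is no element of $\Lambda$ in the interval $[\sup(M),\beta)$, hence none in $[\sup(N \cap \beta),\beta)$. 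Therefore $\beta \le \beta_{M,N}$, while $M \subseteq \beta$ and Lemma 1.2(1) give $\beta_{M,N} \le \beta$; so $\beta_{M,N} = \beta$ and $M \cap \beta_{M,N} = M \in Sk(N)$. In particular $\{M,N\}$ is adequate, so $A \cup \{N\}$ is adequate (this adequacy statement is the analogue already implicit in the basic theory of \cite{krueger1}).

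Next I would verify the three coherence clauses of Definition 2.4 for $A \cup \{N\}$. For an ordered pair $(M_1,M_2)$ of members of $A$ nothing changes: clause (1) does not refer to $A$; any witness $N'$ for clause (2) found in $A$ still lies in $A \cup \{N\}$; and for clause (3), $N \notin Sk(M_1)$, since otherwise $N$ countable would give $N \subseteq Sk(M_1) \cap \omega_2 = M_1$, which together with $M_1 \subseteq N$ forces $M_1 = N$, contradicting $M_1 \cap \omega_1 < N \cap \omega_1$; hence $(A \cup \{N\}) \cap Sk(M_1) = A \cap Sk(M_1)$ and clause (3) is unaffected. Now take a pair involving $N$ and some $M \in A$. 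From the first paragraph, $M \cap \beta_{M,N} = M \in Sk(N)$, so clause (2) is the one that applies to the pair $(M,N)$, and it is witnessed by $N' := N$, since $M \in Sk(N)$ and $N$ is isomorphic to itself. The remaining instances are vacuous: $M \cap \omega_1 < N \cap \omega_1$ forces $M \cap \beta_{M,N} \ne N \cap \beta_{M,N}$ (so clauses (1) and (3) are vacuous for both $(M,N)$ and $(N,M)$), and it forces $N \cap \beta_{M,N} \notin Sk(M)$, since that would require $N \cap \omega_1 < M \cap \omega_1$ (so clause (2) is vacuous for $(N,M)$). The pair $(N,N)$ is trivial. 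Hence $A \cup \{N\}$ is coherent.

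The only step that takes real work is the identification of the comparison point $\beta_{M,N}$ in the first paragraph, namely the verification that the least element of $\Lambda$ above $\sup(M)$ belongs to $\Lambda_M \cap \Lambda_N$; this is where the hypotheses $M \in Sk(N)$ (so $M \subseteq N$) and $M$ countable are used, together with Lemma 1.2(1). After that, the coherence clauses are dispatched by inspection, the key point being simply that $M \cap \omega_1 < N \cap \omega_1$ for every $M \in A$, so the only active clause is clause (2) for a pair $(M,N)$, which is witnessed by $N$ itself.
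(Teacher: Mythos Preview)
Your proof is correct and follows the same overall outline as the paper's: show $A\cup\{N\}$ is adequate by checking that $M\cap\beta_{M,N}=M\in Sk(N)$ for every $M\in A$, and then observe that the coherence clauses are trivially satisfied. The one substantive difference is in how you obtain $M\cap\beta_{M,N}=M$. The paper does this in one line using the basic fact recorded in Section~1 that $(M\cup\lim(M))\cap(N\cup\lim(N))\subseteq\beta_{M,N}$: since $M\in Sk(N)$ gives $M\subseteq N$, this immediately yields $M\subseteq\beta_{M,N}$. You instead compute $\beta_{M,N}$ explicitly as $\min(\Lambda\setminus\sup(M))$, verifying directly that this ordinal lies in $\Lambda_M\cap\Lambda_N$ and invoking Lemma~1.2(1) for the reverse inequality. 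Your route is more self-contained and actually pins down $\beta_{M,N}$, but it is considerably longer; the paper's route is a one-liner once the cited containment is available. Your detailed check of the coherence clauses (including the observation that $N\notin Sk(M_1)$ for $M_1\in A$) is fine, though the paper is content to call all of this ``trivially satisfied,'' since the strict inequality $M\cap\omega_1<N\cap\omega_1$ makes only clause~(2) for the pair $(M,N)$ active, witnessed by $N$ itself.
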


\begin{proof}
If $M \in A$ then since $M \in Sk(N)$, 
$M \cap \beta_{M,N} = M$, which is in $Sk(N)$. 
So $A \cup \{ N \}$ is adequate, and the requirements of being coherent 
are trivially satisfied.
\end{proof}

\begin{lemma}
Let $A$ be a coherent adequate set and $N \in A$. 
Then $A \cap Sk(N)$ is a coherent adequate set.
\end{lemma}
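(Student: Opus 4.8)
The plan is to show that $A' := A \cap Sk(N)$ inherits the three coherence conditions from $A$. Since $A' \subseteq A$, it is automatically adequate, so the work is entirely in verifying conditions (1)--(3) of Definition~2.5 for pairs $M, M'$ in $A'$. The only subtle point is condition (2): given $M, M' \in A'$ with $M \cap \beta_{M,M'} \in Sk(M')$, coherence of $A$ produces some $M''$ in $A$ (not obviously in $Sk(N)$) with $M \in Sk(M'')$ and $M'$ isomorphic to $M''$; I must find such a witness inside $Sk(N)$.

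First I would dispatch conditions (1) and (3), which are nearly immediate. For (1): if $M, M' \in A'$ and $M \cap \beta_{M,M'} = M' \cap \beta_{M,M'}$, then since $M, M' \in A$ and $A$ is coherent, $M$ and $M'$ are isomorphic; this is a statement about $M$ and $M'$ alone and transfers to $A'$ without change. For (3): if additionally $K \in A' \cap Sk(M) = A \cap Sk(N) \cap Sk(M)$, then $\sigma_{M,M'}(K) \in A$ by coherence of $A$; it remains to see $\sigma_{M,M'}(K) \in Sk(N)$. Here I would use that $M, M' \in Sk(N)$, so by elementarity-style reasoning $\sigma_{M,M'} \in Sk(N)$ (the isomorphism is unique and definable from $M$ and $M'$), and $K \in Sk(N)$, hence $\sigma_{M,M'}(K) \in Sk(N)$; thus $\sigma_{M,M'}(K) \in A'$.

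The main obstacle, as noted, is condition (2). Suppose $M, M' \in A'$ and $M \cap \beta_{M,M'} \in Sk(M')$. By the equivalences recorded before Lemma~2.6, this means $M \cap \omega_1 < M' \cap \omega_1$. I plan to invoke Lemma~2.6 with the pair $M, M'$ (noting $K \cap \beta_{K,M} \in Sk(M)$ there corresponds to our situation after renaming): Lemma~2.6 gives $M^* \in A \cap Sk(M')$ with $M$ and $M^*$ isomorphic and $M \cap \beta_{M,M'} = M^* \cap \beta_{M,M'}$. But wait --- that produces a replacement for $M$, not the $N'$ of condition~(2) which should be an isomorphic copy of $M'$ containing $M$. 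So instead I would argue directly: since $M' \in Sk(N)$ and $A \in Sk(N)$, and coherence of $A$ (a statement true in $V$, hence --- since $A$ is finite and $A \in Sk(N)$ with $N \in \mathcal X$ so $Sk(N) \prec H(\omega_2)$ in the relevant language) reflects into $Sk(N)$, there is a witness $M'' \in A \cap Sk(N)$ with $M \in Sk(M'')$ and $M'$ isomorphic to $M''$. The point is that $A$, $M$, $M'$ are all in $Sk(N)$, so the existential assertion "there is $M'' \in A$ with $M \in Sk(M'')$ and $M', M''$ isomorphic", being true in $V$, holds with a witness in $Sk(N)$; thus $M'' \in A' $ as required. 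I would spell out carefully why the isomorphism relation and membership in $A$ are expressible over $\mathfrak N$, perhaps citing that $A$ is finite so $A \in Sk(N)$ codes all its members and their structures, and that $Sk(N) = \pi[N]$ computes $\beta_{M,M'}$, $\sigma_{M,M'}$, and so on correctly for its own elements.

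Assembling these, $A' = A \cap Sk(N)$ is adequate and satisfies (1)--(3), so it is a coherent adequate set, completing the proof. I expect conditions (1) and (3) to be routine and condition (2) to require the reflection argument above as the one genuinely non-trivial step; the key enabling facts are that $A$ is finite (so it and all its structural data lie in $Sk(N)$ once $A \in Sk(N)$) and that $Sk(N)$, being $\pi[N]$ for $N \in \mathcal X$, correctly computes Skolem hulls, comparison points, and the canonical isomorphisms among its members.
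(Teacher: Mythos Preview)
Your treatment of conditions (1) and (3) is fine; in particular, the observation that $\sigma_{M,M'} = \sigma_{M'}^{-1}\circ\sigma_M$ is definable in $(H(\omega_2),\in,\pi)$ from $M$ and $M'$, hence lies in $Sk(N)$, correctly yields $\sigma_{M,M'}(K)\in Sk(N)$.

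The gap is in your argument for condition (2). You assume $A\in Sk(N)$ (and that $A$ is finite), but neither is given, and in fact $A\in Sk(N)$ is \emph{false}: we have $N\in A$, while $N\notin Sk(N)$ (a countable $Sk(N)\prec H(\omega_2)$ does not contain $N=Sk(N)\cap\omega_2$ as an element). Hence the existential statement ``there is $M''\in A$ with $M\in Sk(M'')$ and $M''\cong M'$'' cannot be reflected into $Sk(N)$, because the parameter $A$ is not available there. Replacing $A$ by $A\cap Sk(N)$ does not help either: you would then be assuming exactly what you need to prove, namely that a witness already exists inside $A\cap Sk(N)$.

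The paper's proof avoids elementarity entirely. Given $K,M\in A\cap Sk(N)$ with $K\cap\beta_{K,M}\in Sk(M)$, coherence of $A$ first yields some $M'\in A$ (possibly outside $Sk(N)$) with $K\in Sk(M')$ and $M'\cong M$. Since $M'\cap\omega_1=M\cap\omega_1<N\cap\omega_1$, we get $M'\cap\beta_{M',N}\in Sk(N)$, and then Lemma~2.5 (applied to the pair $M',N$) produces $M^*\in A\cap Sk(N)$ isomorphic to $M'$ with $M^*\cap\beta_{M',N}=M'\cap\beta_{M',N}$. Finally $K\in Sk(M')\cap Sk(N)=Sk(M'\cap\beta_{M',N})=Sk(M^*\cap\beta_{M',N})\subseteq Sk(M^*)$, so $M^*$ is the required witness in $A\cap Sk(N)$. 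The key move you are missing is this use of Lemma~2.5 to pull the external witness $M'$ back into $Sk(N)$ while preserving enough of it below $\beta_{M',N}$ to still contain $K$.
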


\begin{proof}
Clearly $A \cap Sk(N)$ is adequate, and (1) of Definition 2.4 is obvious. 
(3) is also straightforward. 
For (2), let $M$ and $K$ be in $A \cap Sk(N)$ and 
suppose that $K \cap \beta_{K,M} \in Sk(M)$. 
Since $A$ is coherent, there exists $M'$ in $A$ such that 
$K \in Sk(M')$ and $M$ and $M'$ are isomorphic. 
As $M \in Sk(N)$, $M' \cap \omega_1 = M \cap \omega_1 < N \cap \omega_1$. 
Hence $M' \cap \beta_{M',N} \in Sk(N)$. 
By Lemma 2.5 there exists $M^*$ in $A \cap Sk(N)$ such that 
$M'$ and $M^*$ are isomorphic and 
$M^* \cap \beta_{M',N} = M \cap \beta_{M',N}$. 
Now $K \in Sk(M') \cap Sk(N) = Sk(M' \cap N) = 
Sk(M' \cap \beta_{M',N}) = Sk(M^* \cap \beta_{M',N})$. 
So $K \in Sk(M^*)$, $M^* \in A \cap Sk(N)$, and 
$M^*$ and $M$ are isomorphic.
\end{proof}

\begin{lemma}
Let $A$ be a coherent adequate set. 
Suppose that $N$, $N'$, and $N^*$ are in $A$ and are isomorphic, 
where $N' \ne N^*$. 
Then $\sigma_{N',N} \restriction Sk(N' \cap N^*) = 
\sigma_{N^*,N} \restriction (N' \cap N^*)$, 
and for some $\beta \in N \cap \Lambda$, 
this function is an isomorphism 
of $Sk(N' \cap N^*)$ to $Sk(N \cap \beta)$. 
Also $\sigma_{N,N'} \restriction Sk(N \cap \beta) = 
\sigma_{N,N^*} \restriction Sk(N \cap \beta)$.
\end{lemma}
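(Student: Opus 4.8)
The plan is to reduce everything to Lemmas 2.2 and 2.3 together with the composition rule for the isomorphisms $\sigma$. Write $\beta' := \beta_{N',N^*}$. Since $N'$ and $N^*$ are isomorphic members of the coherent adequate set $A$, the equivalences recorded after Definition 2.4 give $N' \cap \beta' = N^* \cap \beta'$, and then the basic fact about adequate pairs quoted in Section 1 gives $N' \cap N^* = N' \cap \beta'$. Note also that $N' \ne N^*$ together with the fact that isomorphic sets in $\mathcal X$ have the same order type forces $N' \not\subseteq N^*$, so in particular $N' \cap N^*$ is a proper subset of $N'$.

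The first thing I would verify is that $N' \cap N^*$ is an \emph{initial segment} of $N'$ in the technical sense, i.e. of the form $N' \cap \delta$ with $\delta \in N' \cap \Lambda$; by symmetry the same will then hold for $N^*$. Since $N' \not\subseteq N^* = N' \cap \beta'$ cannot be all of $N'$, we have $N' \not\subseteq \beta'$; as $\beta'$ is a comparison point it lies in $\Lambda$, so $\beta' \in \Lambda \cap \sup(N')$. By the Section 1 observation about $\min(M \setminus \beta)$, the ordinal $\delta := \min(N' \setminus \beta')$ then lies in $N' \cap \Lambda$, and by minimality $N' \cap \delta = N' \cap \beta' = N' \cap N^*$, as wanted.

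Now for the three assertions. By Lemma 2.3 applied to the adequate pair $\{N', N^*\}$ (isomorphic, with $N' \cap \beta' = N^* \cap \beta'$), the restriction $\sigma_{N',N^*} \restriction Sk(N' \cap N^*)$ is the identity. Since $N$, $N'$, $N^*$ are pairwise isomorphic we have $\sigma_{N',N} = \sigma_{N^*,N} \circ \sigma_{N',N^*}$, and restricting to $Sk(N' \cap N^*)$ (which lies in the domain of both sides since $N' \cap N^*$ is contained in each of $N'$, $N^*$) gives $\sigma_{N',N} \restriction Sk(N' \cap N^*) = \sigma_{N^*,N} \restriction Sk(N' \cap N^*)$; this is the first assertion. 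For the second, apply Lemma 2.2 to the isomorphic pair $N'$, $N$ and the initial segment $K := N' \cap N^*$ of $N'$: this yields an initial segment $K^* = \sigma_{N',N}[K]$ of $N$ with $\sigma_{N',N}[Sk(K)] = Sk(K^*)$ and $\sigma_{N',N} \restriction Sk(K) = \sigma_{K,K^*}$. Since $\sigma_{N',N}$ is an $\in$-isomorphism and hence order preserving on ordinals, and $K$ is a \emph{proper} initial segment of $N'$, $K^*$ is a proper initial segment of $N$, so $K^* = N \cap \beta$ for some $\beta \in N \cap \Lambda$; this $\beta$ is the one sought, and $\sigma_{N',N} \restriction Sk(N' \cap N^*) = \sigma_{K,K^*}$ is an isomorphism of $Sk(N' \cap N^*)$ onto $Sk(N \cap \beta)$. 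Finally, by the first assertion the maps $\sigma_{N',N}$ and $\sigma_{N^*,N}$ agree on $Sk(N' \cap N^*)$ and hence each carries it bijectively onto the same set $Sk(N \cap \beta)$, so their inverses $\sigma_{N,N'}$ and $\sigma_{N,N^*}$ agree on $Sk(N \cap \beta)$, which is the third assertion.

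The main obstacle is the second paragraph: Lemma 2.2 is stated only for initial segments, so one genuinely has to check that $N' \cap N^*$ is an initial segment of $N'$ rather than merely an intersection with the ordinal $\beta'$ — the delicate point being that $\beta'$ itself need not belong to $N'$, which is exactly why one passes to $\delta = \min(N' \setminus \beta')$ and invokes the Section 1 fact that this ordinal lands in $\Lambda$. Once that is in place the remaining steps are routine bookkeeping with the maps $\sigma$, the composition rule, and Lemma 2.3.
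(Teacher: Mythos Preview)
Your proof is correct and follows essentially the same approach as the paper's: use Lemma 2.3 plus the composition rule for the first assertion, then Lemma 2.2 for the second, then take inverses for the third. The only difference is that the paper simply asserts ``Since $N' \ne N^*$, $N' \cap N^*$ is a proper initial segment of $N'$ and of $N^*$'' without justification, whereas you spell this out carefully by passing to $\delta = \min(N' \setminus \beta_{N',N^*})$ and invoking the Section~1 fact that this ordinal lies in $\Lambda$; your extra paragraph fills a genuine (if small) gap in the paper's exposition.
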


\begin{proof}
By Lemma 2.3, 
$\sigma_{N',N^*} \restriction Sk(N' \cap N^*)$ is the identity function. 
Also $\sigma_{N',N} = \sigma_{N^*,N} \circ \sigma_{N',N^*}$. 
So for any $x \in Sk(N' \cap N^*)$, 
$\sigma_{N',N}(x) = \sigma_{N^*,N}(\sigma_{N',N^*}(x)) = 
\sigma_{N^*,N}(x)$. 
This proves that 
$\sigma_{N',N} \restriction Sk(N' \cap N^*) = 
\sigma_{N^*,N} \restriction (N' \cap N^*)$. 
Denote this map by $\sigma$.
 
Since $N' \ne N^*$, $N' \cap N^*$ is a proper initial segment of $N'$ and of $N^*$. 
By Lemma 2.2, $\sigma[N' \cap N^*]$ is equal to 
$N \cap \beta$ for some $\beta \in N \cap \Lambda$, and 
$\sigma$ is an isomorphism of 
$Sk(N' \cap N^*)$ to $Sk(N \cap \beta)$. 
The last statement of the lemma 
follows from the fact that $\sigma_{N,N'} \restriction Sk(N \cap \beta)$ and 
$\sigma_{N,N^*} \restriction Sk(N \cap \beta)$ are both the inverse of $\sigma$.
\end{proof}

\section{Amalgamating Coherent Adequate Sets}

One of the main methods for preserving cardinals when forcing with 
models as side conditions is amalgamating conditions over elementary 
substructures. 
Proposition 3.5, which handles amalgamation over countable substructures, 
will be used to prove that the forcing poset in the 
next section is strongly proper and 
hence preserves $\omega_1$. 
Proposition 3.6 covers amalgamation over models of size $\omega_1$ 
and will be used to prove that the forcing poset in the next section is 
$\omega_2$-c.c.

The next four technical lemmas will be used to prove 
Proposition 3.5.

\begin{lemma}
Let $M$ and $N$ be in $\mathcal X$ and suppose that 
$M$ and $N$ are isomorphic. 
If $\alpha < \gamma$ are in $M$ and 
$\Lambda \cap [\alpha,\gamma] = \emptyset$, 
then $\Lambda \cap [\sigma_{M,N}(\alpha),\sigma_{M,N}(\gamma)] 
= \emptyset$. 
\end{lemma}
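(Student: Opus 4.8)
The plan is to use the fact that $\sigma_{M,N}$ preserves the predicate $\Lambda$ (condition (4) in the definition of isomorphic structures) together with the fact that for members of $\mathcal X$, membership in $\Lambda$ and the relevant suprema are absolute. First I would argue by contradiction: suppose $\alpha < \gamma$ are in $M$, that $\Lambda \cap [\alpha,\gamma] = \emptyset$, but that there is some $\delta \in \Lambda$ with $\sigma_{M,N}(\alpha) \le \delta \le \sigma_{M,N}(\gamma)$.

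The key step is to pull $\delta$ back into $M$ in a controlled way. The natural candidate is $\delta' := \sup(C^* \cap \delta)$ — but I must be careful, since $\delta$ need not itself be in $N$. Instead, a cleaner route is to use the ordinal $\eta := \min(N \setminus \sigma_{M,N}(\alpha))$. Since $\sigma_{M,N}(\gamma) \in N$ and $\sigma_{M,N}(\alpha) \le \delta \le \sigma_{M,N}(\gamma)$, we have $\sigma_{M,N}(\alpha) \le \eta \le \sigma_{M,N}(\gamma)$, and $\eta \in N$. Now I claim $\eta \in \Lambda$: by the remark early in Section 1, if $N \in \mathcal X$ and there is an element of $\Lambda$ in the half-open interval $(\sup(N \cap \sigma_{M,N}(\alpha)), \eta]$ — which there is, namely we can take $\min(\Lambda \setminus \sigma_{M,N}(\alpha)) \le \delta \le \sigma_{M,N}(\gamma)$, forcing it to be $\le \eta$ or handled via $\Lambda_N$ — then $\min(N \setminus \beta) \in \Lambda$ for the appropriate $\beta$. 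More directly: since $\mathfrak M$ and $\mathfrak N$ satisfy the same first-order sentences with parameters matched by $\sigma_{M,N}$, and "$\eta$ is the least element of $N$ above $\sigma_{M,N}(\alpha)$" is expressible, the preimage $\sigma_{M,N}^{-1}(\eta) = \min(M \setminus \alpha)$; and "there exists an element of $\Lambda$ in $[\sigma_{M,N}(\alpha), \eta]$" — wait, $\Lambda$ is not a parameter of $\mathfrak N$ beyond the predicate $\Lambda_N = \Lambda \cap Sk(N)$, so I should instead note that $\delta \le \sigma_{M,N}(\gamma)$ and use that $\min(\Lambda \setminus \sigma_{M,N}(\alpha))$ is itself in $Sk(N)$ because $N$'s structure computes it — this needs the observation that $\beta \in \Lambda, \beta \le \sup(N)$ with $\beta$ the least such above an element of $N$ implies $\beta = \min(N \setminus \beta)$...

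Let me streamline. The honest approach: let $\delta_0 := \min(\Lambda \setminus \sigma_{M,N}(\alpha))$. Since there is an element of $\Lambda$ in $[\sigma_{M,N}(\alpha), \sigma_{M,N}(\gamma)]$, we get $\delta_0 \le \sigma_{M,N}(\gamma)$. By the displayed observation in Section 1 (applied with $N$ in place of $M$), $\delta_0 \in \Lambda_N$, hence $\delta_0 \in Sk(N)$, hence $\delta_0 \in N$. Therefore $\sigma_{M,N}^{-1}(\delta_0)$ is defined, lies in $\Lambda$ by predicate preservation, and (since $\sigma_{M,N}$ is order-preserving with $\sigma_{M,N}(\alpha) \le \delta_0 \le \sigma_{M,N}(\gamma)$) satisfies $\alpha \le \sigma_{M,N}^{-1}(\delta_0) \le \gamma$. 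This contradicts $\Lambda \cap [\alpha, \gamma] = \emptyset$.

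The main obstacle is justifying that $\delta_0 := \min(\Lambda \setminus \sigma_{M,N}(\alpha))$ actually lands in $Sk(N)$; this is exactly the content of the paragraph in Section 1 showing "if $M \in \mathcal X$ and $\beta \in \Lambda \cap \sup(M)$, then $\min(M \setminus \beta) \in \Lambda$" combined with "$\delta_0 \in \Lambda_N$" — one must check $\delta_0 = \min(\Lambda \setminus \sup(N \cap \delta_0))$, i.e. that there is no element of $\Lambda$ strictly between $\sup(N \cap \delta_0)$ and $\delta_0$, which holds because $\sup(N \cap \delta_0) \le \sigma_{M,N}(\alpha)$ (as $\delta_0$ is the least element of $\Lambda$ at or above $\sigma_{M,N}(\alpha)$ and $\sigma_{M,N}(\alpha) \in N$, any element of $N$ below $\delta_0$ is below $\sigma_{M,N}(\alpha)$... actually $\sigma_{M,N}(\alpha)$ itself could be below $\delta_0$, giving $\sup(N \cap \delta_0) \ge \sigma_{M,N}(\alpha)$, so $\sup(N \cap \delta_0) = \sigma_{M,N}(\alpha)$ if $\alpha \notin \Lambda$, or $\delta_0 = \sigma_{M,N}(\alpha)$). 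In either case there is no element of $\Lambda$ in $(\sup(N\cap\delta_0), \delta_0)$, so $\delta_0 \in \Lambda_N \subseteq Sk(N)$. The rest is routine application of predicate preservation and order-preservation for $\sigma_{M,N}$.
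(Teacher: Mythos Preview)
Your overall strategy---find an element of $\Lambda \cap N$ in the image interval and pull it back---is correct, and is exactly what the paper does. But your execution has a real gap at the step ``$\delta_0 \in \Lambda_N$, hence $\delta_0 \in Sk(N)$.'' You are conflating two different objects the paper unfortunately denotes by the same symbol: the Section~1 set $\Lambda_N = \{\beta \in \Lambda : \beta = \min(\Lambda \setminus \sup(N \cap \beta))\}$, and the Section~2 predicate $\Lambda_N = \Lambda \cap Sk(N)$ appearing in the structure $\mathfrak N$. Your justification (``there is no element of $\Lambda$ strictly between $\sup(N\cap\delta_0)$ and $\delta_0$'') establishes membership in the former, but the former is \emph{not} contained in $Sk(N)$: for instance $\min(\Lambda \setminus \sup(N))$ always belongs to the Section~1 $\Lambda_N$ while lying above $N$. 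More to the point, even below $\sup(N)$ your $\delta_0 = \min(\Lambda \setminus \sigma_{M,N}(\alpha))$ need not land in $N$: nothing prevents $N$ from containing $\sigma_{M,N}(\alpha)$ and some $\lambda_1 \in \Lambda$ above it while skipping an intermediate $\lambda_0 \in \Lambda$. Your side-claim that $\sup(N \cap \delta_0) = \sigma_{M,N}(\alpha)$ is also unjustified---$N$ can meet $(\sigma_{M,N}(\alpha),\delta_0)$ in non-$\Lambda$ ordinals.

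The fix is exactly the paper's move: for any $\zeta \in \Lambda \cap [\sigma_{M,N}(\alpha),\sigma_{M,N}(\gamma)]$, set $\zeta^* := \min(N \setminus \zeta)$. This is in $N$ by definition, lies in $[\sigma_{M,N}(\alpha),\sigma_{M,N}(\gamma)]$ since $\sigma_{M,N}(\gamma)\in N$, and lies in $\Lambda$ by the Section~1 observation that $\min(N\setminus\beta)\in\Lambda$ whenever $\beta\in\Lambda\cap\sup(N)$. Then $\sigma_{N,M}(\zeta^*) \in \Lambda\cap[\alpha,\gamma]$ gives the contradiction. Your argument becomes correct once you replace $\delta_0$ by $\min(N\setminus\delta_0)$ at the key step.
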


\begin{proof}
Suppose for a contradiction that $\zeta$ is in 
$\Lambda \cap [\sigma_{M,N}(\alpha),\sigma_{M,N}(\gamma)]$. 
Let $\zeta^* = \min(N \setminus \zeta)$. 
Then $\zeta^* \in N \cap \Lambda \cap 
[\sigma_{M,N}(\alpha),\sigma_{M,N}(\gamma)]$. 
Therefore $\sigma_{N,M}(\zeta^*) \in \Lambda \cap [\alpha,\gamma]$, 
which contradicts that $\Lambda \cap [\alpha,\gamma] = \emptyset$.
\end{proof}

\begin{lemma}
Let $M$ and $N$ be in $\mathcal X$. 
Let $\alpha \le \gamma$ be ordinals, where $\alpha \in M \cup \lim(M)$ and 
$\gamma \in N \cup \lim(N)$. 
If $\Lambda \cap [\alpha,\gamma] = \emptyset$, then 
$\gamma < \beta_{M,N}$.
\end{lemma}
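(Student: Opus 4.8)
The plan is to argue by contradiction, assuming $\beta_{M,N} \le \gamma$, and then derive a violation of the hypothesis $\Lambda \cap [\alpha,\gamma] = \emptyset$. The starting observation is that $\beta_{M,N}$ lies in $\Lambda$, so if $\beta_{M,N} \le \gamma$ we already have one element of $\Lambda$ in the interval $(-\infty,\gamma]$; the work is to show it cannot escape below $\alpha$. I would first handle the easy direction: since $\beta_{M,N} \in \Lambda_M \cap \Lambda_N$, in particular $\beta_{M,N} \in \Lambda_N$, which means there is no element of $\Lambda$ strictly between $\sup(N \cap \beta_{M,N})$ and $\beta_{M,N}$. Combined with the key property of the comparison point, namely $(M \cup \lim(M)) \cap (N \cup \lim(N)) \subseteq \beta_{M,N}$, this should pin down where $\gamma$ and $\alpha$ can sit relative to $\sup(N \cap \beta_{M,N})$.

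Next I would use that $\gamma \in N \cup \lim(N)$. If $\beta_{M,N} \le \gamma$, then since $\beta_{M,N} \in \Lambda$ and $\gamma \ge \beta_{M,N}$, consider $\sup(N \cap \beta_{M,N})$. Because $\beta_{M,N} \in \Lambda_N$ means $\beta_{M,N} = \min(\Lambda \setminus \sup(N \cap \beta_{M,N}))$, every element of $\Lambda$ in the half-open interval $[\sup(N \cap \beta_{M,N}), \beta_{M,N})$ is absent — so in fact the least element of $\Lambda$ that is $\ge \sup(N\cap\beta_{M,N})$ is $\beta_{M,N}$ itself. I would then show $\alpha \le \sup(N \cap \beta_{M,N})$: since $\alpha \in M \cup \lim(M)$ and $\alpha \le \gamma$, either $\alpha < \beta_{M,N}$ (in which case, because $\alpha \in M \cup \lim(M)$, one checks $\alpha \le \sup(M \cap \beta_{M,N})$, and the symmetric fact $\beta_{M,N} \in \Lambda_M$ together with $\Lambda \cap [\alpha,\gamma] = \emptyset$ forces the relevant suprema to agree) or $\alpha \ge \beta_{M,N}$, in which case $\alpha \in (M \cup \lim(M)) \cap (N \cup \lim(N))$ is impossible by the comparison-point property unless $\alpha < \beta_{M,N}$, a contradiction. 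So $\alpha < \beta_{M,N} \le \gamma$, and then $\beta_{M,N} \in \Lambda \cap [\alpha,\gamma]$, contradicting the hypothesis.

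The cleaner way to package this, which I would actually write: assume $\beta := \beta_{M,N} \le \gamma$. Since $\gamma \in N \cup \lim(N)$ and $\beta \le \gamma$, either $\gamma = \beta$ or $\gamma > \beta$; in the latter case $\gamma \in N \cup \lim(N)$ with $\gamma > \beta$ forces $\beta < \sup(N \cap \gamma)$ unless... — here is the real point — by the comparison point property $\gamma \notin \lim(N) \cap (\text{anything beyond } \beta \text{ shared with } M)$, but we only know $\gamma \in N \cup \lim(N)$, not that $\gamma$ is shared, so this is fine. The substantive step is showing $\alpha < \beta$: since $\alpha \le \gamma$ and $\Lambda \cap [\alpha,\gamma] = \emptyset$, and $\beta \in \Lambda$ with $\beta \le \gamma$, we get $\beta < \alpha$ is impossible (that would put $\beta \in \Lambda \cap [\alpha,\gamma]$... wait, $\beta < \alpha$ means $\beta \notin [\alpha,\gamma]$), so actually I must rule out $\beta < \alpha$ by a separate argument: if $\beta < \alpha \le \gamma$, then $\alpha \in M \cup \lim(M)$ with $\alpha > \beta = \beta_{M,N}$; but $\beta_{M,N} = \min(\Lambda_M \cap \Lambda_N)$-type maximality and Lemma 1.2 give that $\beta_{M,N}$ dominates the "interaction" of $M$ with $N$ below it — more precisely I would invoke that $\sup(M \cap \beta) = \sup(M)$ is false in general, so instead: $\alpha > \beta$ and $\alpha \in M$ means $\min(M \setminus \beta) \le \alpha$, and $\min(M\setminus\beta) \in \Lambda$ (by the fact proved in Section 1 that $\min(M \setminus \beta) \in \Lambda$ for $\beta \in \Lambda \cap \sup(M)$, using $\beta \in \Lambda$), with $\beta \le \min(M\setminus\beta) \le \alpha \le \gamma$, so $\min(M \setminus \beta) \in \Lambda \cap [\alpha,\gamma]$ provided $\min(M\setminus\beta) \ge \alpha$, which holds since $\min(M\setminus\beta) \le \alpha$ and... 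I need $\min(M\setminus\beta) = \alpha$ or to bound it below by $\alpha$. If $\alpha \in M$ then $\min(M \setminus \beta) \le \alpha$; if additionally $\min(M\setminus\beta) < \alpha$ then $\min(M\setminus\beta) \in [\beta,\gamma] \subseteq$ region, and since $\min(M\setminus\beta) \ge \beta > $ ... this needs $\min(M\setminus\beta) \in [\alpha,\gamma]$, i.e. $\min(M\setminus\beta)\ge\alpha$; combined with $\le \alpha$ we'd need equality. If $\alpha \in \lim(M)\setminus M$, handle by taking a point of $M$ just below $\alpha$ but above $\beta$. In all cases one produces an element of $\Lambda$ in $[\alpha,\gamma]$, contradiction; hence $\alpha \le \beta$. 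Then $\alpha \le \beta \le \gamma$ yields $\beta \in \Lambda \cap [\alpha,\gamma]$, the final contradiction. \textbf{Main obstacle:} the case analysis showing $\alpha \le \beta_{M,N}$ — specifically keeping track of whether $\alpha$ is in $M$ itself or merely a limit point, and deploying the right one of the Section-1 facts ($\min(M \setminus \beta) \in \Lambda$) versus the comparison-point containment; I expect the limit-point subcase to require the most care, but it should reduce to the membership subcase by choosing an approximating point of $M$ strictly between $\beta_{M,N}$ and $\alpha$.
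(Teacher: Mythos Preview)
Your contradiction approach has a genuine gap in the case $\beta_{M,N} < \alpha$. You set $\delta := \min(M \setminus \beta_{M,N})$ and correctly note $\delta \in \Lambda$; but you only get $\beta_{M,N} \le \delta \le \alpha$, not $\alpha \le \delta$. So $\delta$ lands in $[\beta_{M,N},\alpha]$, not in $[\alpha,\gamma]$, and there is no contradiction with $\Lambda \cap [\alpha,\gamma] = \emptyset$. You recognize this yourself (``I need $\min(M\setminus\beta) = \alpha$ or to bound it below by $\alpha$'') but then go in circles without ever establishing the needed bound. The limit-point subcase has the same defect: approximating $\alpha$ from below by a point of $M$ above $\beta_{M,N}$ only pushes $\delta$ further \emph{below} $\alpha$, never above it.

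The paper's proof sidesteps all of this by working directly rather than by contradiction. Instead of starting from $\beta_{M,N}$ and trying to manufacture an element of $\Lambda$ in the forbidden interval, it starts from the interval: set $\beta := \min(\Lambda \setminus \gamma)$. Then $\gamma \le \sup(N \cap \beta)$ (since $\gamma \in N \cup \lim(N)$ and $\gamma < \beta$) forces $\beta = \min(\Lambda \setminus \sup(N\cap\beta))$, i.e.\ $\beta \in \Lambda_N$; and $\alpha \le \sup(M \cap \beta)$ together with $\Lambda \cap [\alpha,\gamma] = \emptyset$ (hence $\Lambda \cap [\alpha,\beta) = \emptyset$) forces $\beta \in \Lambda_M$. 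Thus $\beta \in \Lambda_M \cap \Lambda_N$, so $\beta \le \beta_{M,N}$, and since $\gamma \notin \Lambda$ we get $\gamma < \beta \le \beta_{M,N}$. The key move you are missing is to exhibit a \emph{new} member of $\Lambda_M \cap \Lambda_N$ above $\gamma$, rather than trying to squeeze a contradiction out of the location of $\beta_{M,N}$ itself.
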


\begin{proof}
Let $\beta = \min(\Lambda \setminus \gamma)$. 
Then $\gamma \le \sup(N \cap \beta)$, so 
$\beta = \min(\Lambda \setminus \sup(N \cap \beta))$. 
Also $\alpha \le \sup(M \cap \beta)$, and since 
$\Lambda \cap [\alpha,\gamma] = \emptyset$, 
$\beta = \min(\Lambda \setminus \sup(M \cap \beta))$. 
Therefore $\beta \in \Lambda_M \cap \Lambda_N$, which implies that 
$\beta \le \beta_{M,N}$. 
Since $\gamma$ is not in $\Lambda$, it follows that $\gamma < \beta_{M,N}$.
\end{proof}

\begin{lemma}
Let $M$, $N$, $K$, and $P$ be in $\mathcal X$, where $M$ and $N$ are 
isomorphic and $K$ and $P$ are in $Sk(M)$. 
Let $\sigma := \sigma_{M,N}$, 
$K^* := \sigma(K)$, and $P^* = \sigma(P)$. 
Suppose that $\beta = \min(M \setminus \beta_{K,P})$. 
Then $\sigma(\beta) = 
\min(N \setminus \beta_{K^*,P^*})$.
\end{lemma}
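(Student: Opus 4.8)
The goal is to show that the operation "close off under the comparison point and then take the next element of the model" commutes with the isomorphism $\sigma = \sigma_{M,N}$. The essential obstacle is that $\beta_{K,P}$ need not belong to $M$, so I cannot simply apply $\sigma$ to $\beta_{K,P}$ directly; instead I must pin down $\beta_{K^*,P^*}$ by a characterization that transfers under $\sigma$. The natural characterization is: $\beta = \min(M \setminus \beta_{K,P})$ is the least element of $M \cap \Lambda$ which is $\ge \sup(K \cap P) $ — more precisely, the least $\delta \in M \cap \Lambda$ such that $\delta \ge \beta_{K,P}$, equivalently (since $\beta_{K,P} \in \Lambda$ and there are no elements of $\Lambda$ strictly between $\sup$ of the relevant intersection and $\beta_{K,P}$) the least $\delta \in M \cap \Lambda$ with $\sup((K \cup \lim K) \cap (P \cup \lim P)) < \delta$ and $\beta_{K,P} \le \delta$. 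I will first establish, using Lemma 1.2, that $\beta_{K,P} \le \beta$; indeed $K \subseteq M$ and $P \subseteq M$, so $\beta_{K,P} \le \beta_{M,M}$, but more usefully I just need that $\min(M \setminus \beta_{K,P})$ is well-defined, which follows since $\sup(M) \in \lim(M) \subseteq \lim(C^*)$ lies above $\beta_{K,P}$.

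Next I would transfer the defining data. Since $K, P \in Sk(M) \cap \mathcal X$, Lemma 2.1 gives that $K^* = \sigma(K)$ and $P^* = \sigma(P)$ are in $Sk(N) \cap \mathcal X$ with $\sigma \restriction Sk(K) = \sigma_{K,K^*}$ and $\sigma \restriction Sk(P) = \sigma_{P,P^*}$. In particular $\sigma[K] = K^*$ and $\sigma[P] = P^*$ (both are countable, so $\sigma$ acts as pointwise image), hence $\sigma[K \cap P] = K^* \cap P^*$, and since $\sigma$ is order-preserving, $\sigma$ carries $\lim(K) \cap \lim(P)$ to $\lim(K^*) \cap \lim(P^*)$ and thus $\sup((K \cup \lim K)\cap(P \cup \lim P))$ inside $M$ corresponds appropriately. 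The key point is that $\beta_{K,P}$ is determined by the pair $(K,P)$ via the $\Lambda$-structure, and $\sigma$ preserves $\Lambda$ by clause (4) of the definition of isomorphism; combined with Lemma 3.1 (which says $\sigma$ preserves $\Lambda$-gaps among elements of $M$), this should let me conclude that the element $\min(M \setminus \beta_{K,P}) = \beta$ is sent by $\sigma$ to the element of $N$ playing the analogous role for $(K^*,P^*)$.

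Concretely, I would argue: let $\beta := \min(M \setminus \beta_{K,P})$ and show (a) $\sigma(\beta) \in N \cap \Lambda$ and $\beta_{K^*,P^*} \le \sigma(\beta)$, and (b) there is no element of $N$ in the half-open interval $[\beta_{K^*,P^*}, \sigma(\beta))$. For (a): $\beta \in M \cap \Lambda$ gives $\sigma(\beta) \in N \cap \Lambda$ by clause (4); and since $\beta \ge \beta_{K,P} \ge \sup((K\cup\lim K)\cap(P\cup\lim P))$, applying $\sigma$ and using that $\sigma$ preserves $\Lambda$ together with Lemma 3.2 (with $\alpha$ the relevant sup, which lies in $\lim K^* \subseteq \lim K^* \cup K^*$, and $\gamma = \sigma(\beta)$) — or more directly by the fact that $\beta_{K^*,P^*}$ is the largest element of $\Lambda_{K^*} \cap \Lambda_{P^*}$ and the gap structure below $\sigma(\beta)$ is the $\sigma$-image of that below $\beta$ — yields $\beta_{K^*,P^*} \le \sigma(\beta)$. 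For (b): if $\gamma^* \in N$ with $\beta_{K^*,P^*} \le \gamma^* < \sigma(\beta)$, then $\gamma := \sigma^{-1}(\gamma^*) \in M$ satisfies $\gamma < \beta$, so by minimality of $\beta$ we get $\gamma < \beta_{K,P}$; but then there would be an element of $\Lambda$ (namely, push $\gamma$ up to $\min(\Lambda \setminus \gamma) \le \beta_{K,P}$... this needs that $\gamma < \beta_{K,P}$ contradicts $\gamma^* \ge \beta_{K^*,P^*}$) — here I expect the cleanest route is to observe $\beta = \min(M \setminus \beta_{K,P})$ forces $\beta_{K,P} \in \Lambda_{K} \cap \Lambda_P$ to have no $M$-elements in $[\beta_{K,P},\beta)$, apply Lemma 3.1 to the pair $\beta_{K,P} \le \beta$ ... but $\beta_{K,P}$ may not be in $M$. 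So instead: pick $\alpha := \sup(M \cap \beta_{K,P}) \in M \cup \lim(M) \subseteq M$ (as $\beta \in \Lambda$, $M \cap \beta$ is cofinal in $\beta$, but $\alpha$ could equal $\beta_{K,P}$ only if it is in $M$) — rather, take any $\alpha \in M$ with $\sup((K\cup\lim K)\cap(P\cup\lim P)) \le \alpha < \beta_{K,P}$, which exists since $\beta_{K,P}\in\Lambda_K$ so this sup is $<\beta_{K,P}$ and lies in $K \cup \lim K \subseteq M \cup \lim M$; then $\Lambda \cap (\alpha,\beta) \subseteq \{\beta_{K,P}\}$ has the property $\Lambda \cap [\alpha',\beta) = \{\beta_{K,P}\}$ for $\alpha' = \beta_{K,P}$... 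The bookkeeping here is the main obstacle, but it is routine: transfer the statement "$\beta_{K,P}$ is the unique element of $\Lambda$ in $(\alpha,\beta]$" via Lemma 3.1 to get "$\sigma(\beta)$ is the least element of $\Lambda \cap N$ above $\sigma(\alpha) \ge \sup((K^*\cup\lim K^*)\cap(P^*\cup\lim P^*))$", and then identify that least element with $\min(N \setminus \beta_{K^*,P^*})$ using that $\beta_{K^*,P^*} \in \Lambda_{K^*}\cap\Lambda_{P^*}$ sits just above that same sup. This yields $\sigma(\beta) = \min(N \setminus \beta_{K^*,P^*})$, as desired.
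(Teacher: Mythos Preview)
Your high-level two-step plan --- show (a) $\beta_{K^*,P^*} \le \sigma(\beta)$ and (b) $N \cap [\beta_{K^*,P^*},\sigma(\beta)) = \emptyset$ --- matches the paper exactly, and you correctly identify Lemmas~3.1 and~3.2 as the key tools. The gap is in the invariant you choose to transfer. You repeatedly try to pin down $\beta_{K,P}$ via the single quantity $\sup\bigl((K\cup\lim K)\cap(P\cup\lim P)\bigr)$, but this does not determine $\beta_{K,P}$: the comparison point is the largest element of $\Lambda_K \cap \Lambda_P$, a condition on how $K$ and $P$ \emph{separately} approach elements of $\Lambda$, not on their overlap. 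The inclusion $(K\cup\lim K)\cap(P\cup\lim P)\subseteq\beta_{K,P}$ only gives one inequality; there is no reason $\beta_{K,P}$ should be the \emph{least} element of $\Lambda$ above that supremum. Your sentence ``$\beta_{K,P}\in\Lambda_K$ so this sup is $<\beta_{K,P}$'' conflates $\sup(K\cap\beta_{K,P})$ with the sup of the intersection, and the subsequent ``routine bookkeeping'' cannot be completed from this starting point.

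The paper's fix is to work instead with the two ordinals $\alpha := \sup(K\cap\beta)$ and $\gamma := \sup(P\cap\beta)$. These lie in $M$ (since $K,P\subseteq M$ and the sups are computed in $Sk(M)$), so $\sigma$ can be applied to them directly. The defining property $\beta_{K,P}\in\Lambda_K\cap\Lambda_P$ gives $\beta_{K,P}=\min(\Lambda\setminus\alpha)=\min(\Lambda\setminus\gamma)$, hence $\Lambda\cap[\alpha,\gamma]=\emptyset$; Lemma~3.1 transfers this gap to $[\sigma(\alpha),\sigma(\gamma)]$, and since $\sigma(\alpha)\in\lim(K^*)$, $\sigma(\gamma)\in\lim(P^*)$, Lemma~3.2 yields $\beta_{K^*,P^*}>\sigma(\gamma)$. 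A second application of Lemma~3.1 to the elements of $M\cap[\gamma,\beta)$ gives $\beta_{K^*,P^*}>\sup(N\cap\sigma(\beta))$, which is your (b). For (a), the paper argues by contradiction: if $\beta_{K^*,P^*}>\sigma(\beta)$, repeat the same analysis with $\tau:=\sup(K^*\cap\beta_{K^*,P^*})$ and $\xi:=\sup(P^*\cap\beta_{K^*,P^*})$ and $\sigma^{-1}$ to force $\beta_{K,P}>\beta$. The symmetry of this last step is what makes the two-supremum approach clean; your single-supremum quantity does not behave symmetrically under $\sigma$ and $\sigma^{-1}$.
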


\begin{proof}
Let $\alpha = \sup(K \cap \beta)$ and $\gamma = \sup(P \cap \beta)$. 
Without loss of generality assume that $\alpha \le \gamma$. 
Since $\alpha$ and $\gamma$ have cofinality $\omega$, they are 
not in $\Lambda$. 
And as $\alpha$ and $\gamma$ are in $M$ and below $\beta$, 
$\alpha$ and $\gamma$ are less than $\beta_{K,P}$. 
Thus $\alpha = \sup(K \cap \beta_{K,P})$ and 
$\gamma = \sup(P \cap \beta_{K,P})$. 

Since $\beta_{K,P} \in \Lambda_K \cap \Lambda_P$, 
$\beta_{K,P} = \min(\Lambda \setminus \alpha) = 
\min(\Lambda \setminus \gamma)$. 
So $\Lambda \cap [\alpha,\gamma] = \emptyset$. 
By Lemma 3.1 it follows that 
$\Lambda \cap [\sigma(\alpha),\sigma(\gamma)] = \emptyset$. 
Since $\sigma(\alpha) \in \lim(K^*)$ and $\sigma(\gamma) \in \lim(P^*)$, 
Lemma 3.2 implies that $\beta_{K^*,P^*} > \sigma(\gamma)$. 

By the definition of $\beta$, $\sup(M \cap \beta) < \beta_{K,P}$. 
Since $\beta_{K,P} = \min(\Lambda \setminus \gamma)$, 
it follows that for all $\gamma' \in M \cap [\gamma,\beta)$, 
$\Lambda \cap [\gamma,\gamma'] = \emptyset$. 
Hence by Lemma 3.1, for all $\gamma^* \in N \cap [\sigma(\gamma),\sigma(\beta))$, 
$\Lambda \cap [\sigma(\gamma),\gamma^*] = \emptyset$. 
Therefore $\beta_{K^*,P^*} > \sup(N \cap \sigma(\beta))$.

We will be done if we can show that $\beta_{K^*,P^*} \le \sigma(\beta)$. 
Suppose for a contradiction that $\beta_{K^*,P^*} > \sigma(\beta)$. 
Let $\tau = \sup(K^* \cap \beta_{K^*,P^*})$ and 
$\xi = \sup(P^* \cap \beta_{K^*,P^*})$. 
Without loss of generality assume that $\tau \le \xi$, since the other case 
follows by a symmetric argument. 
So $\beta_{K^*,P^*} = \min(\Lambda \setminus \tau) = 
\min(\Lambda \setminus \xi)$. 
Since $\beta_{K^*,P^*} > \sigma(\beta)$ and $\sigma(\beta) \in \Lambda$, 
$\tau$ and $\xi$ 
are greater than $\sigma(\beta)$. 
Also clearly $\Lambda \cap [\tau,\xi] = \emptyset$. 
By Lemma 3.1, $\Lambda \cap [\sigma^{-1}(\tau),\sigma^{-1}(\xi)] = \emptyset$. 
Since $\sigma^{-1}(\tau) \in \lim(K)$ and 
$\sigma^{-1}(\xi) \in \lim(P)$, 
Lemma 3.2 implies that $\beta_{K,P} > \sigma^{-1}(\xi)$. 
But $\xi > \sigma(\beta)$ implies that $\sigma^{-1}(\xi) > \beta$. 
Hence $\beta_{K,P} > \beta$, which is a contradiction.
\end{proof}

\begin{lemma}
Let $M$, $N$, $K$, and $P$ be in $\mathcal X$. 
Suppose that $M$ and $N$ are isomorphic and $K$ and $P$ are in $Sk(M)$. 
If $\{ K, P \}$ is adequate, 
then $\{ \sigma_{M,N}(K), \sigma_{M,N}(P) \}$ is adequate.
\end{lemma}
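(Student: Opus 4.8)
The plan is to transport the adequacy of $\{K,P\}$ along the isomorphism $\sigma := \sigma_{M,N}$ to its image, writing $K^* := \sigma(K)$ and $P^* := \sigma(P)$. First I would record the standard consequences of the isomorphism: since $\sigma$ preserves the predicate $\mathcal X$, both $K^*$ and $P^*$ lie in $\mathcal X \cap Sk(N)$; since a member of $\mathcal X$ lying in $Sk(M)$ is a subset of $M$ (and likewise with $N$), we have $K, P \subseteq M$ and $K^*, P^* \subseteq N$; $\sigma$ preserves $\in$ and is therefore order-preserving on ordinals; and $\sigma(a) = \sigma[a]$ for every countable $a \in Sk(M)$, so in particular $K^* = \sigma[K]$ and $P^* = \sigma[P]$. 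I would also note that $K \cap \beta_{K,P}$ and $P \cap \beta_{K,P}$ both belong to $Sk(M)$: in the equality case both equal $K \cap P \in Sk(M)$, and in each membership case one of them lies in $Sk(P)$ or in $Sk(K)$, which, being a countable member of $Sk(M)$, is contained in $Sk(M)$.

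The crux is the identity
$$
K^* \cap \beta_{K^*,P^*} = \sigma[K \cap \beta_{K,P}]
$$
together with the analogous identity for $P$. To establish it, put $\beta := \min(M \setminus \beta_{K,P})$, which exists because $\beta_{K,P}$ is bounded above by some element of $M \cap \Lambda$ (by Lemma 1.2(1)), so that Lemma 3.3 applies and gives $\sigma(\beta) = \min(N \setminus \beta_{K^*,P^*})$. Since $K^* \subseteq N$ and no element of $N$ falls in the interval $[\beta_{K^*,P^*},\sigma(\beta))$, we get $K^* \cap \beta_{K^*,P^*} = K^* \cap \sigma(\beta)$; since $K^* = \sigma[K]$ and $\sigma$ is order-preserving, $K^* \cap \sigma(\beta) = \sigma[K \cap \beta]$; and since $K \subseteq M$ and $\beta = \min(M \setminus \beta_{K,P})$, every element of $K$ below $\beta$ is below $\beta_{K,P}$, so $K \cap \beta = K \cap \beta_{K,P}$. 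The same computation with $P$ in place of $K$ finishes this step.

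Finally I would split into the three cases of the definition of adequacy for $\{K,P\}$. If $K \cap \beta_{K,P} = P \cap \beta_{K,P}$, applying $\sigma$ elementwise gives $K^* \cap \beta_{K^*,P^*} = P^* \cap \beta_{K^*,P^*}$. If $K \cap \beta_{K,P} \in Sk(P)$, then since $K \cap \beta_{K,P} \in Sk(M)$ we have $\sigma(K \cap \beta_{K,P}) = \sigma[K \cap \beta_{K,P}] = K^* \cap \beta_{K^*,P^*}$, and applying $\sigma$ to the membership $K \cap \beta_{K,P} \in Sk(P)$ while using Lemma 2.1 in the form $\sigma(Sk(P)) = Sk(P^*)$ yields $K^* \cap \beta_{K^*,P^*} \in Sk(P^*)$; the case $P \cap \beta_{K,P} \in Sk(K)$ is symmetric. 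In each case one of the three clauses of adequacy holds for $\{K^*,P^*\}$, completing the proof. I expect the only real obstacle to be the bookkeeping in the middle paragraph, since the genuine geometric content---that $\sigma$ respects comparison points even though $\mathfrak M$ need not be elementary in $(H(\omega_2),\in,\pi,\mathcal X,\Lambda)$---is already isolated in Lemma 3.3, and everything else is a routine pushforward of the three-way disjunction.
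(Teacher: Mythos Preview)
Your approach is essentially the paper's, and the core computation via Lemma~3.3 is correct, but there is one gap: your claim that $\beta := \min(M \setminus \beta_{K,P})$ always exists is not justified by Lemma~1.2(1). That lemma says that if $K \subseteq \gamma$ for some $\gamma \in \Lambda$ then $\beta_{K,P} \le \gamma$, but there is no reason such a $\gamma$ should lie in $M$; nothing prevents $\beta_{K,P} \ge \sup(M)$, in which case $M \setminus \beta_{K,P}$ is empty and Lemma~3.3 does not apply. The paper handles this by an explicit case split: when $\beta_{K,P} \ge \sup(M)$ one has $K \cap \beta_{K,P} = K$ and $P \cap \beta_{K,P} = P$, so adequacy of $\{K,P\}$ reduces to $K \in Sk(P)$, $P \in Sk(K)$, or $K = P$, each of which is pushed forward by $\sigma$ directly using $\sigma(Sk(P)) = Sk(P^*)$ from Lemma~2.1. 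Once this trivial case is disposed of, your argument for the remaining case $\beta_{K,P} < \sup(M)$ coincides with the paper's.
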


\begin{proof}
Let $\sigma := \sigma_{M,N}$, $K^* := \sigma_{M,N}(K)$, and 
$P^* := \sigma_{M,N}(P)$. 
By symmetry it suffices to consider the cases when 
$K \cap \beta_{K,P} \in Sk(P)$ and $K \cap \beta_{K,P} = P \cap \beta_{K,P}$. 
First assume that $\beta_{K,P} \ge \sup(M)$. 
Then $K \cap \beta_{K,P} = K$ and $P \cap \beta_{K,P} = P$. 
If $K \cap \beta_{K,P} \in Sk(P)$, then $K \in Sk(P)$. 
So $\sigma(K) \in \sigma(Sk(P)) = Sk(\sigma(P))$. 
Also if $K \cap \beta_{K,P} = P \cap \beta_{K,P}$, then $K = P$, which implies 
that $\sigma(K) = \sigma(P)$.

Now assume that $\beta_{K,P} < \sup(M)$. 
Let $\beta := \min(M \setminus \beta_{K,P})$. 
Then $K \cap \beta = K \cap \beta_{K,P}$ and 
$P \cap \beta = P \cap \beta_{K,P}$. 
By Lemma 3.3, $\sigma(\beta) = 
\min(N \setminus \beta_{K^*,P^*})$. 
Therefore $K^* \cap \sigma(\beta) = 
K^* \cap \beta_{K^*,P^*}$ and 
$P^* \cap \sigma(\beta) = 
P^* \cap \beta_{K^*,P^*}$.

Suppose that $K \cap \beta_{K,P} \in Sk(P)$. 
Then $K \cap \beta \in Sk(P)$. 
So $\sigma(K \cap \beta) = K^* \cap \sigma(\beta) 
\in \sigma(Sk(P)) = Sk(P^*)$. 
Therefore $K^* \cap \beta_{K^*,P^*} \in Sk(P^*)$. 
Now suppose that 
$K \cap \beta_{K,P} = P \cap \beta_{K,P}$. 
Then $K \cap \beta = P \cap \beta$. 
So $K^* \cap \sigma(\beta) = \sigma(K \cap \beta) = 
\sigma(P \cap \beta) = P^* \cap \sigma(\beta)$. 
Hence $K^* \cap \beta_{K^*,P^*} = 
P^* \cap \beta_{K^*,P^*}$.
\end{proof}

The following proposition describes amalgamation of coherent adequate sets over 
countable elementary substructures. 
It will be used to prove that the forcing poset in the next section 
is strongly proper.

\begin{proposition}
Let $A$ be a coherent adequate set and $N \in A$. 
Suppose that $B$ is a coherent adequate set and 
$A \cap Sk(N) \subseteq B \subseteq Sk(N)$. 
Let $C$ be the set 
$$
\{ M \in A : N \cap \omega_1 \le M \cap \omega_1 \} \cup 
\{ \sigma_{N,N'}(K) : N' \in A, \ N \cap \omega_1 = N' \cap \omega_1, \ 
K \in B \}.
$$
Then $C$ is a coherent adequate set which contains $A \cup B$.
\end{proposition}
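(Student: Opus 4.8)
The plan is to verify the two containments, then check adequacy and each of the three coherence clauses, organizing everything around a decomposition of $C$. Write $A^{\ge N} := \{M \in A : N \cap \omega_1 \le M \cap \omega_1\}$, and for each $N' \in A$ with $N' \cap \omega_1 = N \cap \omega_1$ put $B_{N'} := \sigma_{N,N'}[B] \subseteq Sk(N')$; then $C = A^{\ge N} \cup \bigcup_{N'} B_{N'}$, with $B_N = B$. That $C \subseteq \mathcal X$ is clear, and $B \subseteq C$ is the instance $N' = N$. For $A \subseteq C$: a model $M \in A$ of level $\ge N \cap \omega_1$ lies in $A^{\ge N}$, while if $M \cap \omega_1 < N \cap \omega_1$, coherence clause (2) of $A$ gives $N'$ isomorphic to $N$ with $M \in Sk(N')$, whence clause (3) of $A$ gives $\sigma_{N',N}(M) \in A \cap Sk(N) \subseteq B$ and so $M = \sigma_{N,N'}(\sigma_{N',N}(M)) \in B_{N'}$. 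The same argument yields a refinement used repeatedly: for $N' \in A$ of level $N \cap \omega_1$, applying $\sigma_{N',N}$ and clause (3) of $A$ shows $A \cap Sk(N') \subseteq B_{N'}$, so any member of $A$ lying in some $Sk(N')$ may be regarded as a member of $B_{N'}$. Two routine observations recur: every member of $\bigcup_{N'} B_{N'}$ has $\omega_1$-level strictly below $N \cap \omega_1$, whereas members of $A^{\ge N}$ have level at least $N \cap \omega_1$ (which kills many mixed cases); and, since an adequate pair intersected with an element of $\Lambda$ is adequate, Lemma 1.3 gives that if $\beta \in \Lambda$ with $\beta \ge \beta_{P,P'}$, then adequacy of $\{P \cap \beta, P' \cap \beta\}$ implies adequacy of $\{P, P'\}$ (intersect at $\beta_{P,P'}$; Lemma 1.2 handles the comparison points).

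For adequacy of $C$, split by which pieces the two models lie in. Two models in $A^{\ge N}$ are adequate because $A$ is. For $\sigma_{N,N'}(K)$ and $\sigma_{N,N''}(K')$ with $K, K' \in B$: if $N' = N''$, push the adequate pair $\{K, K'\}$ through $\sigma_{N,N'}$ by Lemma 3.4; if $N' \ne N''$, apply Lemma 2.8 to $\{N, N', N''\}$ to get $\beta^* \in N \cap \Lambda$ with $\sigma_{N',N} \restriction Sk(N' \cap N'')$ an isomorphism onto $Sk(N \cap \beta^*)$, use Lemma 1.2(2) to see $\beta_{\sigma_{N,N'}(K),\sigma_{N,N''}(K')} \le \beta_{N',N''}$, check that this isomorphism sends the $\beta_{N',N''}$-truncations of $\sigma_{N,N'}(K)$ and $\sigma_{N,N''}(K')$ to $K \cap \beta^*$ and $K' \cap \beta^*$, and transport the adequate pair $\{K \cap \beta^*, K' \cap \beta^*\}$ back through the inverse isomorphism via Lemma 3.4, finishing by the reduction above. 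For $M \in A^{\ge N}$ and $\sigma_{N,N'}(K) \in B_{N'}$: if $M = N'$ this is immediate since $\sigma_{N,N'}(K) \in Sk(N')$; if $M \ne N'$, then since $\sigma_{N,N'}(K) \subseteq N'$ we may by Lemma 1.2(2) work below $\beta_{M,N'}$, where we replace $\sigma_{N,N'}(K)$ by an isomorphic copy lying in $Sk(M)$ and agreeing with it below $\beta_{M,N'}$ — produced by Lemma 2.8 when $M$ and $N'$ have equal level, and by Lemma 2.5 when $M$ has the higher level — reducing to the triviality that any member of $\mathcal X \cap Sk(M)$ is adequate with $M$.

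Coherence clauses (1) and (2) for $C$ follow the same division and are comparatively routine. For (1), level bookkeeping leaves the two models either both in $A^{\ge N}$ (use coherence of $A$) or both in $\bigcup B_{N'}$ (use that $\sigma_{N,N'}(K)$ is isomorphic to $K$ by Lemma 2.1, together with clause (1) of $B$). For (2), with $M_1 \cap \omega_1 < M_2 \cap \omega_1$: the case $M_1 \in A^{\ge N}$, $M_2 \in \bigcup B_{N'}$ is vacuous; if both lie in $A^{\ge N}$, the witness from clause (2) of $A$ again lies in $A^{\ge N}$; if $M_1 = \sigma_{N,N'}(K)$ and $M_2 \in A^{\ge N}$, then either $N'$ is isomorphic to $M_2$ and $N' \in A^{\ge N} \subseteq C$ is a witness (as $M_1 \in Sk(N')$), or $N' \cap \omega_1 < M_2 \cap \omega_1$ and clause (2) of $A$ applied to $\{N', M_2\}$ produces $M_2'' \in A^{\ge N}$ with $N' \in Sk(M_2'')$, hence $M_1 \in Sk(N') \subseteq Sk(M_2'')$; and if $M_1 = \sigma_{N,N'}(K)$, $M_2 = \sigma_{N,N''}(K')$ lie in $\bigcup B_{N'}$ with $K \cap \omega_1 < K' \cap \omega_1$, apply clause (2) of $B$ to $\{K, K'\}$ to get $K'' \in B$ with $K \in Sk(K'')$ and $K'$ isomorphic to $K''$, and take $\sigma_{N,N'}(K'') \in B_{N'} \subseteq C$ as the witness, using Lemma 2.1 to see $M_1 \in Sk(\sigma_{N,N'}(K''))$.

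Clause (3) — given isomorphic $M_1, M_2 \in C$ and $K \in C \cap Sk(M_1)$, show $\sigma_{M_1,M_2}(K) \in C$ — is where I expect the real work. Again only ``both $M_i \in A^{\ge N}$'' and ``both $M_i \in \bigcup B_{N'}$'' occur, and by the refinement above we may take $K$ of the form $\sigma_{N,N'}(K_0)$ with $K_0 \in B$ (the case $K \in A$ with $M_1, M_2 \in A^{\ge N}$ being dispatched by clause (3) of $A$ and $A \subseteq C$). In the remaining configurations the strategy is to factor $\sigma_{M_1,M_2}$ into isomorphisms among $N$, the members of $B$, and the models of $A$ of level $N \cap \omega_1$: for $M_1 = \sigma_{N,P_1}(J_1)$, Lemma 2.1 identifies $\sigma_{J_1,M_1}$ with $\sigma_{N,P_1} \restriction Sk(J_1)$; Lemma 2.8 (applied to $\{N, M_1, N'\}$, or to $\{N, P_1, N'\}$) shows that the map carrying $K$ back into $Sk(N)$ returns $K_0 \in B$; when $M_1 \in A^{\ge N}$ has strictly higher level than $N'$, coherence clause (2) of $A$ absorbs $N'$ into an isomorphic copy $M_1'$ of $M_1$, and since $N' \cap M_1 \subseteq M_1'$ the conjugating isomorphism $\sigma_{M_1,M_1'}$ is the identity on $Sk(N' \cap M_1)$, which contains $K$; and clause (3) of $A$ (respectively of $B$) shows the transported model is again of the form $\sigma_{N,N''}(K_1)$ with $N'' \in A$ of level $N \cap \omega_1$ and $K_1 \in B$, hence lies in $B_{N''} \subseteq C$. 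The main obstacle, here and in the $N' \ne N''$ adequacy case, is keeping the bookkeeping of these isomorphisms straight — exactly which restricted isomorphism fixes which Skolem hull, and what the image of a comparison-point truncation is; once that is settled, Lemmas 2.1, 2.5, 2.8, 3.4 and the coherence of $A$ and $B$ do the rest.
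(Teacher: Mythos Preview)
Your proposal is correct and follows essentially the same route as the paper: the same decomposition of $C$, the same case structure for adequacy and for each coherence clause, and the same reliance on Lemmas 2.1, 2.5, 2.8, and 3.4 (the paper even remarks that the coherence part is Lemmas 3.8--3.9 of \cite{cummings}). The one place where the paper is more direct is the adequacy of a mixed pair $M \in A^{\ge N}$, $L = \sigma_{N,N'}(K)$: rather than producing an isomorphic copy of $L$ inside $Sk(M)$ via Lemma 2.5 or 2.8, the paper simply observes that $\beta_{L,M} \le \beta_{M,N'}$ and that $N' \cap \beta_{M,N'}$ is either in $Sk(M)$ or equal to $M \cap \beta_{M,N'}$, so $L \cap \beta_{L,M} \in Sk(N') \cap Sk(\beta_{M,N'}) = Sk(N' \cap \beta_{M,N'}) \subseteq Sk(M)$, which settles adequacy in one line.
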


\begin{proof}
First we prove that $C$ is adequate. 
Obviously any two sets in $\{ M \in A : N \cap \omega_1 \le M \cap \omega_1 \}$ 
compare properly since $A$ is adequate. 
Consider $M \in A$ with $N \cap \omega_1 \le M \cap \omega_1$, and 
$L = \sigma_{N,N'}(K)$ for some $N' \in A$ with 
$N \cap \omega_1 = N' \cap \omega_1$ and 
some $K \in B$. 
Since $N' \cap \omega_1 = N \cap \omega_1 \le M \cap \omega_1$, 
the set $N' \cap \beta_{M,N'}$ is either in $Sk(M)$ 
or is equal to $M \cap \beta_{M,N'}$. 
In either case, $Sk(N' \cap \beta_{M,N'})$ is a subset of $Sk(M)$. 
Since $L \subseteq N'$, $\beta_{L,M} \le \beta_{M,N'}$. 
As $L$ is in $Sk(N')$, $L \cap \beta_{L,M}$ is in 
$Sk(N') \cap Sk(\beta_{M,N'}) = Sk(N' \cap \beta_{M,N'})$. 
Hence $L \cap \beta_{L,M}$ is a member of $Sk(M)$.

Now consider $M$ and $L$ such that $M = \sigma_{N,N'}(K)$ for some 
$N' \in A$ with $N \cap \omega_1 = N' \cap \omega_1$ and some $K \in B$, and 
$L = \sigma_{N,N^*}(P)$ for some $N^* \in A$ with 
$N \cap \omega_1 = N^* \cap \omega_1$ and some $P \in B$. 
Since $B$ is adequate, $K$ and $P$ compare properly. 
If $N' = N^*$, then $\{ M, L \}$ is adequate by Lemma 3.4. 
Suppose $N' \ne N^*$. 
By symmetry it suffices to consider the cases when 
$K \cap \beta_{K,P}$ is either in $Sk(P)$ or is equal to $P \cap \beta_{K,P}$.

The sets $N'$ and $N^*$ are isomorphic, and 
$N' \cap \beta_{N',N^*} = N^* \cap \beta_{N',N^*} = N' \cap N^*$. 
By Lemma 2.8, $\sigma_{N',N} \restriction N' \cap N^* = 
\sigma_{N^*,N} \restriction N' \cap N^*$, and there exists 
$\beta \in N \cap \Lambda$ such that 
$N \cap \beta = \sigma_{N',N}[N' \cap N^*]$. 
Let $\sigma := \sigma_{N,N'} \restriction Sk(N \cap \beta)$. 
By Lemma 2.8, $\sigma = \sigma_{N,N^*} \restriction Sk(N \cap \beta)$ and 
$\sigma$ is an isomorphism of $Sk(N \cap \beta)$ to $Sk(N' \cap N^*)$. 
Now $\sigma(K \cap \beta) = 
\sigma_{N,N'}[K \cap (N \cap \beta)] = 
\sigma_{N,N'}[K] \cap \sigma_{N,N'}[N \cap \beta] = 
M \cap (N' \cap \beta_{N',N^*}) = M \cap \beta_{N',N^*}$, and similarly 
$\sigma(P \cap \beta) = L \cap \beta_{N',N^*}$.

Since $\{ K, P \}$ is adequate, so is $\{ K \cap \beta, P \cap \beta \}$. 
By Lemma 3.4, it follows that 
$\{ \sigma(K \cap \beta), \sigma(P \cap \beta) \}$ is adequate. 
In other words, 
$\{ M \cap \beta_{N',N^*}, L \cap \beta_{N',N^*} \}$ is adequate. 
Since $M \subseteq N'$, $\beta_{L,M} \le \beta_{L,N'}$, and since 
$L \subseteq N^*$, $\beta_{L,N'} \le \beta_{N',N^*}$. 
Hence $\beta_{L,M} \le \beta_{N',N^*}$. 
Therefore $\{ M \cap \beta_{L,M}, L \cap \beta_{L,M} \}$ is adequate. 
By Lemma 1.3 it follows that $\{ M, L \}$ is adequate.

Now we show that $A \cup B \subseteq C$ and $C$ is coherent. 
This statement follows immediately from Lemmas 3.8 and 3.9 
of \cite{cummings}; we include a proof for completeness. 
If $K \in B$, then $K = \sigma_{N,N}(K)$ is in $C$ by definition. 
Let $M \in A$. 
If $N \cap \omega_1 \le M \cap \omega_1$, then $M \in C$ by definition. 
Otherwise $M \cap \omega_1 < N \cap \omega_1$. 
So there exists $N' \in A$ 
isomorphic to $N$ such that $M \in Sk(N')$. 
Let $K := \sigma_{N',N}(M)$, which is in $A \cap Sk(N)$ and hence in $B$. 
Then $M = \sigma_{N,N'}(K)$ is in $C$.

Suppose that $L$ and $M$ are in $C$ and 
$L \cap \omega_1 = M \cap \omega_1$. 
We will show that $L$ and $M$ are isomorphic. 
If $M \cap \omega_1 \ge N \cap \omega_1$, then $L$ and $M$ are in $A$ 
and hence are isomorphic. 
Otherwise $M = \sigma_{N,N'}(M^*)$ and $L = \sigma_{N,N''}(L^*)$, where 
$M^*$ and $L^*$ are in $B$ and $N'$ and $N''$ are in $A$ 
and are isomorphic to $N$. 
Then $M^* \cap \omega_1 = L^* \cap \omega_1$, which implies that 
$M^*$ and $L^*$ are isomorphic. 
It follows that $M$ and $L$ are isomorphic.

Assume that $L$ and $M$ are in $C$ and 
$L \cap \omega_1 < M \cap \omega_1$. 
We will show that there is $M'$ in $C$ isomorphic to $M$ such that 
$L \in Sk(M')$. 
If $N \cap \omega_1 \le L \cap \omega_1$, then $L$ and $M$ are in $A$ 
and we are done. 
Suppose that $L \cap \omega_1 < N \cap \omega_1 \le M \cap \omega_1$. 
Then $L = \sigma_{N,N'}(L^*)$ for some $L^*$ in $B$ and $N' \in A$ 
which is isomorphic to $N$. 
Fix $M'$ in $A$ which is isomorphic to $M$ such that $N'$ 
is either equal to $M'$ or is a member of $Sk(M')$. 
Then $L \in Sk(M')$ and we are done.

Assume that $M \cap \omega_1 < N \cap \omega_1$. 
Then $L = \sigma_{N,N'}(L^*)$ and $M = \sigma_{N,N''}(M^*)$, where 
$L^*$ and $M^*$ are in $B$ and $N'$ and $N''$ are in $A$ and are both 
isomorphic to $N$. 
Since $L^* \cap \omega_1 < M^* \cap \omega_1$, 
there is $M^{**}$ in $B$ isomorphic to $M^*$ such that 
$L^* \in Sk(M^{**})$. 
Then $\sigma_{N,N'}(M^{**})$ is in $C$, is 
isomorphic to $M^{**}$ and hence to 
$M$, and its Skolem hull contains $L$.

Now assume that $M$, $K$, and $L$ are in $C$, 
$M \cap \omega_1 = K \cap \omega_1$, and $L \in C \cap Sk(M)$. 
We will show that $\sigma_{M,K}(L) \in C$. 
First assume that $N \cap \omega_1 \le M \cap \omega_1$. 
Then $M$ and $K$ are in $A$. 
If $L \in A$ then we are done. 
So assume that $L = \sigma_{N,N'}(L^*)$ for some $L^* \in B$ and 
$N'$ in $A$ isomorphic to $N$. 
Fix $J$ in $A$ isomorphic to $M$ such that $N'$ is either equal to $J$ 
or a member of $Sk(J)$. 
Let $N'' := \sigma_{J,M}(N')$ and let 
$N''' := \sigma_{M,K}(N'')$. 
Then $N''$ and $N'''$ are in $A$. 
So $\sigma_{N,N'''}(L^*) \in C$. 
Since $L$ is in $Sk(J) \cap Sk(M)$, 
$\sigma_{J,M}(L) = L$. 
Then $\sigma_{N,N'''}(L^*) = 
\sigma_{N'',N'''}(\sigma_{N',N''}(\sigma_{N,N'}(L^*))) = 
\sigma_{N'',N'''}(\sigma_{N',N''}(L)) = 
\sigma_{M,K}(\sigma_{J,M}(L)) = \sigma_{M,K}(L)$. 
So $\sigma_{M,K}(L) \in C$.

Finally, assume that $M \cap \omega_1 < N \cap \omega_1$. 
Then $M = \sigma_{N,N'}(M^*)$, $K = \sigma_{N,N''}(K^*)$, and 
$L = \sigma_{N,N'''}(L^*)$, where $M^*$, $K^*$, and $L^*$ are in $B$, 
and $N'$, $N''$, and $N'''$ are in $A$ and are isomorphic to $N$. 
Since $L \in Sk(M)$, $L \in Sk(N') \cap Sk(N''')$. 
So $\sigma_{N',N}(L) = \sigma_{N''',N}(L) = L^*$. 
So $\sigma_{M,M^*}(L) = \sigma_{N',N}(L) = L^*$. 
Then $\sigma_{M,K}(L) = 
\sigma_{K^*,K}(\sigma_{M^*,K^*}(\sigma_{M,M^*}(L))) = 
\sigma_{K^*,K}(\sigma_{M^*,K^*}(L^*)) = 
\sigma_{N,N''}(\sigma_{M^*,K^*}(L^*))$. 
Since $L^* \in B$, $\sigma_{M^*,K^*}(L^*) \in B$. 
Hence $\sigma_{N,N''}(\sigma_{M^*,K^*}(L^*)) \in C$. 
So $\sigma_{M,K}(L) \in C$.
\end{proof}

The next result describes amalgamation of coherent adequate sets over models 
of size $\omega_1$. 
It will be used to show that the forcing poset in the next section 
is $\omega_2$-c.c.

\begin{proposition}
Let $A$ be a coherent adequate set and $\beta \in \Lambda$. 
Let $A^+ := \{ M \in A : M \setminus \beta \ne \emptyset \}$ and 
$A^{-} := \{ M \in A : M \subseteq \beta \}$. 
Suppose that $\beta^* \in \beta \cap \Lambda$ and 
for all $M \in A$, $\sup(M \cap \beta) < \beta^*$. 
Assume that there exists a map $M \mapsto M'$ from $A^+$ into 
$\mathcal X \cap Sk(\beta)$ satisfying that for all $M$ and $K$ in $A^+$:
\begin{enumerate}
\item $M$ and $M'$ are isomorphic and 
$M \cap \beta^* = M' \cap \beta^*$;
\item $K \in Sk(M)$ iff $K' \in Sk(M')$;
\item if $K \in Sk(M)$ then $\sigma_{M,M'}(K) = K'$;
\item $A^{-} \cup \{ M' : M \in A^+ \}$ is a coherent adequate set.
\end{enumerate}
Then $C := A \cup \{ M' : M \in A^+ \}$ is a coherent adequate set.
\end{proposition}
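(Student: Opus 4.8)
The plan is to follow the template of the proof of Proposition~3.5: verify first that $C$ is adequate, and then verify the three clauses of Definition~2.4. Write $B := A^{-} \cup \{ M' : M \in A^+ \}$, the coherent adequate set provided by hypothesis~(4); note $A = A^+ \cup A^{-}$, that every member of $A^{-}$ and every $M'$ is a subset of $\beta$ (hence lies in $Sk(\beta)$), and consequently $B \subseteq C$ and $C \cap Sk(X) \subseteq B$ whenever $X \subseteq \beta$. For $M \in A^+$ one has $M \cap \beta = M \cap \beta^*$, since $\sup(M \cap \beta) < \beta^* < \beta$; hence $M \cap M' = M \cap \beta^* = M' \cap \beta^*$ is an initial segment of both $M$ and $M'$. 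The adequacy argument below shows in particular that $\{ M, M' \}$ is adequate with $M \cap \beta_{M,M'} = M' \cap \beta_{M,M'}$, so by hypothesis~(1) and Lemma~2.3 the map $\sigma_{M,M'} \restriction Sk(M \cap \beta^*)$ is the identity. Combined with hypotheses~(2) and~(3) this gives the \emph{transport principle}: $\sigma_{M,M'}$ fixes every member of $C \cap Sk(M)$ that lies in $\beta$, sends each $K \in A^+ \cap Sk(M)$ to $K' \in B$, and therefore carries $C \cap Sk(M)$ into $B \cap Sk(M')$. This principle, together with the coherence of $A$ and of $B$, is the engine of the argument.

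For adequacy, pairs from $A$ compare properly because $A$ is adequate, pairs from $B$ because $B$ is coherent adequate, and (since $A^{-} \subseteq B$ and each $M' \in B$) the only new pair is $\{ M, L' \}$ with $M, L \in A^+$. I would first compute $\beta_{M,L'} = \beta_{M \cap \beta^*, L \cap \beta^*} =: \gamma \le \beta^*$: using $L' \subseteq \beta$ and Lemma~1.2(4), $\beta_{M,L'} = \beta_{M \cap \beta, L'} = \beta_{M \cap \beta^*, L'}$; since $M \cap \beta^* \subseteq \beta^*$, Lemma~1.2(1) gives $\beta_{M \cap \beta^*, L'} \le \beta^*$; then Lemma~1.2(3) (applied to the second coordinate, the comparison point being symmetric) together with $L' \cap \beta^* = L \cap \beta^*$ yields the stated value. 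Since $\{ M, L \}$ is adequate so is $\{ M \cap \beta^*, L \cap \beta^* \}$, and restricting this adequate pair to $\gamma \in \Lambda$ gives that $\{ M \cap \gamma, L' \cap \gamma \}$ is adequate (because $M \cap \gamma = (M \cap \beta^*) \cap \gamma$ and $L' \cap \gamma = (L \cap \beta^*) \cap \gamma$, using $\gamma \le \beta^*$). Lemma~1.3 then gives that $\{ M, L' \}$ is adequate; the case $M = L$ is the same computation with a singleton and yields the facts about $\{ M, M' \}$ used above.

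Coherence clauses~(1) and~(2) are routine. For~(1): given $X, Y \in C$ with $X \cap \omega_1 = Y \cap \omega_1$, replace any primed model by its unprimed preimage (isomorphic to it by hypothesis~(1)), apply clause~(1) for $A$, and compose isomorphisms. For~(2): given $X, Y \in C$ with $X \cap \omega_1 < Y \cap \omega_1$, pass to the unprimed preimages and apply clause~(2) for $A$ to produce $\hat Y \in A$ isomorphic to the preimage of $Y$ with the preimage of $X$ in $Sk(\hat Y)$; using the observation that a member of $A$ whose Skolem hull contains a member of $A^+$ is itself in $A^+$ (any witnessing ordinal $\ge \beta$ in the inner model lies in the outer one), one sees $\hat Y \in A^+$ whenever $X$ is primed, and then $\hat Y$, or $\hat Y'$ when $X = L'$ (using hypothesis~(2) to place $L'$ in $Sk(\hat Y')$), is the required witness in $C$.

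Clause~(3) is where the work lies: given isomorphic $X, Y \in C$ and $K \in C \cap Sk(X)$, one must show $\sigma_{X,Y}(K) \in C$. If $X, Y \in A$ and $K \in A$ this is clause~(3) for $A$. In every other case I would decompose $\sigma_{X,Y}$ as a composition: first $\sigma_{X,X'}$ if $X \in A^+$ (if $X \in A^{-}$ or $X$ is primed, $X$ already lies in $B$ and there is no first factor), then an isomorphism between two members of $B$ (namely $X$ or $X'$, and $Y$ or $Y'$), then $\sigma_{Y',Y}$ if $Y \in A^+$; these compose to $\sigma_{X,Y}$ because all the intermediate models are pairwise isomorphic, the rule $\sigma_{M,N} = \sigma_{K,N} \circ \sigma_{M,K}$ applying. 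The transport principle handles the outer factors: $K$ lies in $\beta$ unless $K \in A^+$ (in which case it first goes to $K' \subseteq \beta$), so it is fixed by $\sigma_{X,X'}$, and likewise $\sigma_{Y',Y}$ fixes the final argument, which is already in $B$ and $\subseteq \beta$; the middle factor sends its argument into $B$ by clause~(3) for $B$. Hence every branch collapses to $\sigma_{X,Y}(K) \in B \subseteq C$. I expect the bookkeeping here to be the main obstacle, namely checking at each step that the object being moved lies in $\beta$ — hence in the common initial segment $Sk(Z \cap \beta^*)$ on which $\sigma_{Z,Z'}$ is the identity — and lies in the relevant Skolem hull so that clause~(3) for $B$ is applicable; everything else reduces mechanically to Lemmas~1.2, 1.3, 2.3, the coherence of $A$ and of $B$, and hypotheses~(1)--(4).
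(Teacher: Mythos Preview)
Your overall strategy matches the paper's, and the adequacy computation and clauses~(1)--(2) are fine (for clause~(2) you route everything through the coherence of $A$ plus hypothesis~(2), whereas the paper uses the coherence of $B$ for the case $M' \cap \beta_{K,M'} \in Sk(K)$; both work). There is, however, a genuine slip in your clause~(3) argument.

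You claim that ``$\sigma_{Y',Y}$ fixes the final argument, which is already in $B$ and $\subseteq \beta$'', and conclude that $\sigma_{X,Y}(K) \in B$. But being a subset of $\beta$ is not what makes $\sigma_{Y',Y}$ act as the identity: the identity restriction is only on $Sk(Y' \cap \beta^*) = Sk(Y \cap \beta^*)$, so it applies only to arguments contained in $\beta^*$. The output of your middle factor lies in $B \cap Sk(Y')$, and $B$ contains primed models $P'$ which satisfy $P' \subseteq \beta$ but $P' \setminus \beta^* \ne \emptyset$ (since $P$ and $P'$ have the same order type, strictly larger than that of $P \cap \beta^* = P' \cap \beta^*$). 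For such $P'$ one has $\sigma_{Y',Y}(P') = P \in A^+$ by the inverse of hypothesis~(3), not $P'$; so the final answer lands in $A$, not in $B$, and your conclusion ``$\sigma_{X,Y}(K) \in B$'' is false in this case.

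The fix is immediate: state the \emph{inverse} transport principle, namely that for $Y \in A^+$ the map $\sigma_{Y',Y}$ carries $B \cap Sk(Y')$ into $A \cap Sk(Y)$, fixing elements of $A^-$ and sending each $P' \mapsto P$ by hypotheses~(2)--(3). With that correction your three-factor decomposition goes through and yields $\sigma_{X,Y}(K) \in C$; this is precisely what the paper does, only the paper writes out the two mixed cases ($M_1 \in A^+$, $M_2$ primed, and vice versa) separately rather than packaging them as one composition.
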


\begin{proof}
Note that by assumption (1), $\sigma_{M,M'} \restriction \beta^*$ is 
the identity function for all $M \in A^+$. 
Let us begin by proving that $C$ is adequate. 
Note that if $M \in A^+$, then $M$ and $M'$ have the same order type, 
which is larger than the order type of $M \cap \beta^* = M' \cap \beta^*$; 
it follows that $M' \setminus \beta^*$ is nonempty. 
Therefore $C$ is the union of the three disjoint sets 
$A^{-}$, $A^{+}$, and $\{ M' : M \in A^+ \}$. 
By (4) and the fact that $A$ is adequate, it suffices to compare 
a set in $A^+$ with a set in $\{ M' : M \in A^+ \}$.

Let $K$ and $M$ be in $A^+$, and let us compare $K$ and $M'$. 
Since $M' \subseteq \beta$, $\beta_{K,M'} \le \beta$ by Lemma 1.2(1). 
Hence $\beta_{K,M'} = \beta_{K \cap \beta,M'}$ by Lemma 1.2(3). 
But $K \cap \beta = K \cap \beta^*$, which implies by Lemma 1.2(1,4) that 
$\beta_{K,M'} = 
\beta_{K \cap \beta,M'} = 
\beta_{K \cap \beta^*,M' \cap \beta^*} \le \beta^*$. 
Also $K \cap \beta^* = K' \cap \beta^*$ and 
$M' \cap \beta^* = M \cap \beta$. 
Now $\beta_{K,M'} = \beta_{K \cap \beta^*,M' \cap \beta^*}$, 
and since $K \cap \beta^* \subseteq K$ and 
$M' \cap \beta^* \subseteq M$, it follows that 
$\beta_{K,M'} \le \beta_{K,M}$.

We split into cases depending on the comparison of $K$ and $M$. 
Suppose that $K \cap \beta_{K,M} \in Sk(M)$. 
Since $\beta_{K,M'} \le \beta^*, \beta_{K,M}$, 
it follows that $K \cap \beta_{K,M'} \in 
Sk(M) \cap Sk(\beta^*) = Sk(M \cap \beta^*) = 
Sk(M' \cap \beta^*)$. 
Therefore $K \cap \beta_{K,M'} \in Sk(M')$. 
Now assume that $M \cap \beta_{K,M} \in Sk(K)$. 
Since $\beta_{K,M'} \le \beta_{K,M}$, 
$M \cap \beta_{K,M'} \in Sk(K)$. 
But $\beta_{K,M'} \le \beta^*$ implies that 
$M \cap \beta_{K,M'} = M' \cap \beta_{K,M'}$. 
So $M' \cap \beta_{K,M'} \in Sk(K)$. 
Now assume that $K \cap \beta_{K,M} = M \cap \beta_{K,M}$. 
Since $\beta_{K,M'} \le \beta_{K,M}$, 
$K \cap \beta_{K,M'} = M \cap \beta_{K,M'}$. 
But $\beta_{K,M'} \le \beta^*$, so 
$M \cap \beta_{K,M'} = M' \cap \beta_{K,M'}$. 
Hence $K \cap \beta_{K,M'} = M' \cap \beta_{K,M'}$.

Now we show that $C$ is coherent. 
Recall that $A$ is the union of the three disjoint sets 
$A^{+}$, $A^{-}$, and $\{ M' : M \in A^+ \}$. 
The union of the first and second set is equal to $A$, which is coherent, and the 
union of the second and third set is coherent by (4). 
Note that requirements (1) and (2) in the definition of coherence 
follow immediately from these facts, 
except for the case of a pair of models where one 
is in $A^+$ and the other is in $\{ M' : M \in A^{+} \}$.

Let $K$ and $M$ be in $A^+$, and we verify requirements (1) and (2) 
for $K$ and $M'$. 
Suppose that $K \cap \beta_{K,M'} = M' \cap \beta_{K,M'}$. 
Then $K \cap \omega_1 = M \cap \omega_1$. 
Since $A$ is coherent, $K$ and $M$ are isomorphic. 
Hence $K$ and $M'$ are isomorphic.

Suppose that $K \cap \beta_{K,M'} \in Sk(M')$. 
Then $K \cap \omega_1 < M \cap \omega_1$, 
so $K \cap \beta_{K,M} \in Sk(M)$. 
So there exists $M^*$ in $A$ such that $K \in Sk(M^*)$ and 
$M$ and $M^*$ are isomorphic. 
Hence $M^*$ and $M'$ are isomorphic. 
Now assume that $M' \cap \beta_{K,M'} \in Sk(K)$. 
Then $M' \cap \omega_1 < K' \cap \omega_1$, so 
$M' \cap \beta_{K',M'} \in Sk(K')$. 
Since $A^{-} \cup \{ L' : L \in A^+ \}$ is coherent, 
there is $K^*$ in $C$ such that $M' \in Sk(K^*)$ and 
$K^*$ and $K'$ are isomorphic. 
Then $K^*$ and $K$ are isomorphic.

Now we prove that requirement (3) holds of $C$. 
Let $M_1$ and $M_2$ be in $C$ with 
$M_1 \cap \beta_{M_1,M_2} = M_2 \cap \beta_{M_1,M_2}$ 
and let $K \in C \cap Sk(M_1)$. 
We will prove that $\sigma_{M_1,M_2}(K)$ is in $C$. 
Note that if $M_1$ and $M_2$ are both in $A$, then so is $K$, and if 
$M_1$ and $M_2$ are both in $A^- \cup \{ M' : M \in A^+ \}$, 
then so is $K$. 
Since $A$ and $A^- \cup \{ M' : M \in A^+ \}$ are both coherent, 
we are done in these cases. 
So again it suffices to prove (3) in the case of two sets, where one 
is in $A^+$ and the other is in $\{ M' : M \in A^+ \}$. 

Assume that $M_1$ is in $A^+$ and $M_2 = M'$ for some 
$M \in A^+$. 
Then $M_1$ and $M$ are isomorphic. 
Since $K \in Sk(M_1)$, $K \cap \beta \subseteq \beta^*$, and hence 
$K$ is in $A$. 
As $A$ is coherent, $P := \sigma_{M_1,M}(K) \in A \cap Sk(M)$. 
If $P \in A^-$, then since $\sigma_{M,M'} \restriction \beta^*$ 
is the identity, $\sigma_{M,M'}(P) = P$. 
Hence $\sigma_{M_1,M'}(K) = \sigma_{M,M'}(P) = P$ is in $A$. 
Otherwise $P \in A^+$, and by assumption (3), 
$\sigma_{M,M'}(P) = P'$. 
So $\sigma_{M_1,M'}(K) = \sigma_{M,M'}(\sigma_{M_1,M}(K)) = 
\sigma_{M,M'}(P) = P' \in C$.

In the last case assume that $M_1 = M'$ for some $M \in A^+$ 
and $M_2 \in A^+$. 
Since $K \in Sk(M')$, $K \subseteq \beta$, so $K$ is not in $A^+$. 
Suppose that $K$ is in $A^-$. 
Then $K$ is a subset of $\beta^*$, so 
$\sigma_{M',M}(K) = K$. 
Hence $K$ is in $Sk(M) \cap A$, and therefore $\sigma_{M,M_2}(K) \in A$ 
since $A$ is coherent. 
But $\sigma_{M',M_2}(K) = \sigma_{M,M_2}(\sigma_{M',M}(K)) 
= \sigma_{M,M_2}(K) \in C$. 
Otherwise $K$ is equal to $P'$ for some $P \in A^+$. 
So $P' \in Sk(M')$. 
By assumptions (3) and (4), $P \in Sk(M)$ and $\sigma_{M,M'}(P) = P'$. 
Since $P$ is in $A$ and $A$ is coherent, $\sigma_{M,M_2}(P) \in A$. 
So $\sigma_{M',M_2}(K) = \sigma_{M',M_2}(P') = 
\sigma_{M',M_2}(\sigma_{M,M'}(P)) = 
\sigma_{M,M_2}(P) \in C$.
\end{proof}

\section{Forcing Square with Finite Conditions}

We define a forcing poset which adds a square sequence with finite conditions, 
using coherent adequate sets as side conditions.

By a \emph{triple} we mean a sequence 
$\langle \alpha, \gamma, \beta \rangle$, where $\alpha \in \Lambda$ and 
$\gamma < \beta < \alpha$. 
Given distinct triples $\langle \alpha, \gamma, \beta \rangle$ and 
$\langle \alpha', \gamma', \beta' \rangle$, we say that the triples are 
\emph{nonoverlapping} if either $\alpha \ne \alpha'$, 
or $\alpha = \alpha'$ and $[\gamma,\beta) \cap [\gamma',\beta') = \emptyset$; 
otherwise they are \emph{overlapping}. 
Given a triple $\langle \alpha, \gamma, \beta \rangle$ and $M \in \mathcal X$, 
we say that $\langle \alpha, \gamma, \beta \rangle$ and $M$ 
are \emph{nonoverlapping} 
if $\alpha \in M$ implies that either 
$\gamma$ and $\beta$ are in $M$ or $\sup(M \cap \alpha) < \gamma$; 
otherwise they are \emph{overlapping}.

Clearly if $M$ and $N$ are isomorphic and $a$ and $b$ are 
nonoverlapping triples in $Sk(M)$, then 
$\sigma_{M,N}(a)$ and $\sigma_{M,N}(b)$ are nonoverlapping triples. 
And if $K \in Sk(M) \cap \mathcal X$ and $a$ and $K$ are nonoverlapping, 
then $\sigma_{M,N}(a)$ and $\sigma_{M,N}(K)$ are nonoverlapping.

\begin{definition}
Let $\p$ be the forcing poset whose conditions are pairs 
$( x, A )$ satisfying:
\begin{enumerate}
\item $x$ is a finite pairwise nonoverlapping set of triples;
\item $A$ is a finite coherent adequate set;
\item for all $M \in A$ and $\langle \alpha, \gamma, \beta \rangle \in x$, 
$M$ and $\langle \alpha, \gamma, \beta \rangle$ are nonoverlapping;
\item if $M$ and $M'$ are in $A$ and 
$M \cap \beta_{M,M'} = M' \cap \beta_{M,M'}$, then for any 
triple $\langle \alpha, \gamma, \beta \rangle \in Sk(M) \cap x$, 
$\sigma_{M,M'}(\langle \alpha, \gamma, \beta \rangle) \in x$.
\end{enumerate}
Let $(y,B) \le (x,A)$ if $x \subseteq y$ and $A \subseteq B$.
\end{definition}

If $p = (x,A)$, we write $x_p := x$ and $A_p := A$.

We will prove that $\p$ preserves all cardinals. 
For each $\alpha \in \Lambda$, let $\dot c_\alpha$ be a $\p$-name for 
the set 
$$
\{ \gamma : \exists p \in \dot G \ \exists \beta \ 
( \langle \alpha, \gamma, \beta \rangle \in x_p ) \}.
$$
We will show that each $\dot c_\alpha$ is a cofinal subset of $\alpha$ 
with order type $\omega_1$, and whenever $\xi$ is a common 
limit point of $\dot c_{\alpha}$ and $\dot c_{\alpha'}$, 
$\dot c_\alpha \cap \xi = \dot c_{\alpha'} \cap \xi$.

\begin{lemma}
Let $A$ be a coherent adequate set and $x$ a set of triples. 
Let $y$ be the set 
$$
x \cup \{ \sigma_{M,M'}(a) : M, M' \in A, \ 
M \cap \omega_1 = M' \cap \omega_1, \ a \in x \cap Sk(M) \}.
$$
Then for all $N$ and $N'$ in $A$ which are isomorphic and any $a \in y$, 
$\sigma_{N,N'}(a) \in y$.
\end{lemma}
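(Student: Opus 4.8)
The plan is to verify directly from the definition of $y$ that $y$ is closed under all maps $\sigma_{N,N'}$ where $N,N'\in A$ are isomorphic. An element $a\in y$ is either in $x$, or of the form $\sigma_{M,M'}(b)$ for some isomorphic $M,M'\in A$ and $b\in x\cap Sk(M)$. In each case I want to rewrite $\sigma_{N,N'}(a)$ as $\sigma_{P,P'}(c)$ for suitable isomorphic $P,P'\in A$ and $c\in x\cap Sk(P)$.

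First I would dispose of the trivial subcase: if $a\in x$ and $a\in Sk(N)$, then $\sigma_{N,N'}(a)$ is already of the required form with $M=N$, $M'=N'$, $b=a$. If $a\in x$ but $a\notin Sk(N)$, there is nothing to force: but wait — I actually need $\sigma_{N,N'}(a)$ to make sense, i.e.\ $a\in Sk(N)$. So really the only nontrivial content is when $a\in Sk(N)$. Thus assume $a\in Sk(N)$ throughout. If $a\in x$ this is handled. So suppose $a=\sigma_{M,M'}(b)$ with $M,M'\in A$ isomorphic, $b\in x\cap Sk(M)$, and $a\in Sk(N)$; I must show $\sigma_{N,N'}(a)\in y$.

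The key step is to locate, inside $A$, an isomorphic copy $M^{*}$ of $M$ sitting in $Sk(N)$ with $\sigma_{M,M^{*}}$ acting correctly on $b$. Here the cases on the comparison of $M'$ and $N$ come in. Since $M$ and $M'$ are isomorphic, $M\cap\omega_1=M'\cap\omega_1$. If $M'\cap\omega_1\ge N\cap\omega_1$: then $a=\sigma_{M,M'}(b)\in Sk(N)$ forces (by an order-type argument, or by noting $a\subseteq\beta_{a,N}$ lies below $\sup(N\cap\ldots)$) that in fact $M'\in Sk(N)$ is impossible unless $M'\cap\omega_1 = N\cap\omega_1$, in which case $a\in Sk(N)\cap Sk(M')=Sk(M'\cap N)$; using coherence (requirement (3) of Definition 2.4 and Lemma 2.5) I pull $M$ back to an isomorphic $M^{*}\in A\cap Sk(N)$ with $b^{*}:=\sigma_{M,M^{*}}(b)\in x$ (by clause (4) of the definition of $\p$) and $\sigma_{M^{*},M'}$ acting as the identity on enough of the structure that $\sigma_{M,M'}(b)=\sigma_{M^{*},M'}(b^{*})$, whence $\sigma_{N,N'}(a)=\sigma_{N,N'}(\sigma_{M^{*},M'}(b^{*}))$ can be rewritten through $N,N'$ and $M^{*}$ — here I use that $M^{*},N$ are isomorphic (same $\omega_1$), so $\sigma_{N,N'}\circ\sigma_{M^{*},M'}$ composes to $\sigma_{M^{**},M^{***}}$ for appropriate models of $A$ that $Sk(N')$ sees. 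If instead $M'\cap\omega_1 < N\cap\omega_1$: coherence gives $N'''\in A$ isomorphic to $N$ with $M\in Sk(N''')$ (after first, by clause (2) of Definition 2.4, finding a copy of $N$ above $M$; note $b\in Sk(M)\subseteq Sk(N''')$, and $b\in x$), then $\sigma_{N''',N}(b)\in x$ by clause (4) of $\p$ lies in $Sk(N)$, and one computes $\sigma_{N,N'}(a)=\sigma_{N,N'}(\sigma_{M,M'}(b))$ by chasing the commuting diagram of $\sigma$'s among $M,M',N,N',N'''$, landing on an element of the form $\sigma_{P,P'}(c)$, $P,P'\in A$ isomorphic, $c\in x\cap Sk(P)$.

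I expect the main obstacle to be the bookkeeping of the compositions: verifying that each $\sigma$-chain such as $\sigma_{N,N'}\circ\sigma_{M,M'}$ collapses (via the cocycle identity $\sigma_{M,N}=\sigma_{K,N}\circ\sigma_{M,K}$ for pairwise isomorphic $M,N,K$, and via Lemma 2.1 / Lemma 2.2 for restrictions to Skolem hulls of members) to a \emph{single} map $\sigma_{P,P'}$ with both $P,P'$ in $A$ and with the witnessing triple $c$ actually in $x$ and in $Sk(P)$. The triples in $x$ require clause (4) of Definition 4.1 exactly to guarantee $\sigma_{M,M'}(a)\in x$ when $M,M'\in A$ are isomorphic and $a\in x\cap Sk(M)$, and the transfer of triples under the $\sigma$-maps is purely formal since triples are finite sequences of ordinals, so preservation is automatic once the domain condition $a\in Sk(M)$ is met. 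The argument is in fact parallel to the verification that $C$ is coherent in Proposition 3.5 (requirement (3)), and I would structure it the same way, case by case on how the various $\omega_1$-values of the models involved compare.
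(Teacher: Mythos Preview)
Your proposal has a genuine gap: you repeatedly invoke ``clause (4) of the definition of $\p$'' to conclude that images of elements of $x$ under isomorphisms $\sigma_{M,M'}$ again lie in $x$. But Lemma~4.2 does \emph{not} assume that $(x,A)$ is a condition in $\p$; it assumes only that $A$ is a coherent adequate set and $x$ is an arbitrary set of triples. Indeed, in the applications (Proposition~4.3(4) and Lemma~4.6(4)) the set $x$ is something like $x_r \cup x_w$ or $x_p \cup \{\langle\alpha,\gamma,\tau\rangle\}$, which is not assumed closed under the $\sigma$-maps. The whole point of the lemma is that the single-step closure $y$ is already closed, so that the saturation terminates after one step. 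Your argument, by assuming $x$ is already $\sigma$-closed, essentially assumes the conclusion.

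The paper's proof avoids this by never moving the witness $b \in x$: in every case it rewrites $\sigma_{N,N'}(a)$ as $\sigma_{P,P'}(b)$ for the \emph{original} $b$, where $P,P'\in A$ are isomorphic. The work goes into manufacturing $P$ and $P'$ inside $A$ using Lemma~2.5 (to find an isomorphic copy inside the smaller model) together with Definition~2.4(3) (to transport that copy via $\sigma_{N,N'}$ or $\sigma_{M',M}$ and stay in $A$). A secondary issue: your case $M'\cap\omega_1 \ge N\cap\omega_1$ seems to collapse to $M'\cap\omega_1 = N\cap\omega_1$ by arguing that otherwise $M'\in Sk(N)$ fails, but $a\in Sk(N)$ does not require $M'\in Sk(N)$; the strict case $M'\cap\omega_1 > N\cap\omega_1$ (equivalently $N\cap\beta_{M',N}\in Sk(M')$) genuinely occurs and needs its own treatment, as in the paper's third case.
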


\begin{proof}
Let $N$ and $N'$ be isomorphic sets in $A$ and $a \in y$. 
If $a \in x$, then $\sigma_{N,N'}(a) \in y$ by definition. 
Otherwise there are $M$ and $M'$ in $A$ which are isomorphic and 
$b$ in $x$ such that $a = \sigma_{M,M'}(b)$. 
So $a$ is in $Sk(M') \cap Sk(N) = Sk(M' \cap N)$. 

First assume that $M' \cap \beta_{M',N} \in Sk(N)$. 
By Lemma 2.5 there is $M^*$ in $Sk(N)$ which is isomorphic to 
$M'$ such that $M' \cap \beta_{M',N} = M^* \cap \beta_{M',N}$. 
In particular, $a \in Sk(M' \cap N) = Sk(M' \cap \beta_{M',N}) = 
Sk(M^* \cap \beta_{M',M})$. 
By Lemma 2.8, $\sigma_{M^*,M}(a) = \sigma_{M',M}(a) = b$. 
So $\sigma_{M,M^*}(b) = \sigma_{M,M'}(b) = a$. 
Let $P := \sigma_{N,N'}(M^*)$. 
Then $\sigma_{N,N'} \restriction Sk(M^*) = \sigma_{M^*,P}$. 
By Lemma 2.8, $\sigma_{M',P} \restriction Sk(M' \cap M^*) = 
\sigma_{M^*,P} \restriction Sk(M' \cap M^*)$. 
Hence $\sigma_{M',P}(a) = \sigma_{M^*,P}(a) = \sigma_{N,N'}(a)$. 
So $\sigma_{M,P}(b) = \sigma_{M^*,P}(\sigma_{M,M^*}(b)) = 
\sigma_{M^*,P}(a) = \sigma_{N,N'}(a)$. 
Since $b \in x$, $\sigma_{M,P}(b) \in y$ by definition. 
So $\sigma_{N,N'}(a) \in y$.

Now suppose that $M' \cap \beta_{M',N} = N \cap \beta_{M',N}$. 
Then by Lemma 2.8, $\sigma_{M',N'} \restriction Sk(M' \cap N) = 
\sigma_{N,N'} \restriction Sk(M' \cap N)$. 
Since $a$ is in $Sk(M' \cap N)$, 
$\sigma_{N,N'}(a) = \sigma_{M',N'}(a) = 
\sigma_{M',N'}(\sigma_{M,M'}(b)) = \sigma_{M,N'}(b)$, which is in $y$ 
since $b \in x$.

Finally assume that $N \cap \beta_{M',N} \in Sk(M')$. 
Fix $N^* \in Sk(M')$ which is isomorphic to $N$ such that 
$N \cap \beta_{M',N} = N^* \cap \beta_{M',N}$. 
Let $L := \sigma_{M',M}(N^*)$. 
Then $a \in Sk(M' \cap N) = Sk(N \cap \beta_{M',N}) = 
Sk(N^* \cap \beta_{M',N})$, so $a \in Sk(N^*)$. 
Also $\sigma_{M',M} \restriction N^* = \sigma_{N^*,L}$. 
Hence $\sigma_{N^*,L}(a) = \sigma_{M',M}(a) = b$. 
By Lemma 2.8, $\sigma_{N,N'} \restriction Sk(N \cap N^*) = 
\sigma_{N^*,N'} \restriction Sk(N \cap N^*)$. 
Therefore $\sigma_{N,N'}(a) = \sigma_{N^*,N'}(a)$. 
So $\sigma_{N,N'}(a) = \sigma_{N^*,N'}(a) = 
\sigma_{N^*,N'}(\sigma_{M,M'}(b)) = 
\sigma_{N^*,N'}(\sigma_{L,N^*}(b)) = 
\sigma_{L,N'}(b)$, which is in $y$ since $b \in x$.
\end{proof}

Recall that a forcing poset $\q$ is \emph{strongly proper} if for all 
sufficiently large regular cardinals 
$\theta$ with $\q \in H(\theta)$, there are club many sets 
$N$ in $P_{\omega_1}(H(\theta))$ such that 
for all $p \in N \cap \q$ there exists $q \le p$ which is \emph{strongly $N$-generic}, which means 
that for any dense subset $D$ of the forcing poset $\q \cap N$, 
$D$ is predense below $q$ in $\q$ (\cite{mitchell}).
Strong properness implies properness, which in turn implies that $\omega_1$ 
is preserved.

\begin{proposition}
The forcing poset $\p$ is strongly proper.
\end{proposition}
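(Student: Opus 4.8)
The plan is to follow the standard strategy for side-condition forcings, with Proposition~3.5 and Lemma~4.2 doing the heavy lifting. Fix a sufficiently large regular $\theta$ with $\p \in H(\theta)$ and let $N \prec (H(\theta),\in,\p,\pi,\mathcal X,\Lambda)$ be countable; for club many such $N$, the set $M := N \cap \omega_2$ lies in $\mathcal X$ and $Sk(M) = \pi[M] = N \cap H(\omega_2)$. I claim every such $N$ witnesses strong properness. Given $p = (x_p,A_p) \in N \cap \p$, we have $A_p \in N \cap H(\omega_2) = Sk(M)$, so by Lemma~2.6 the set $A_p \cup \{M\}$ is coherent adequate; requirements (3) and (4) of Definition~4.1 for $(x_p, A_p \cup \{M\})$ are then readily checked, using that $x_p \subseteq Sk(M)$ and that $M \cap \omega_1$ exceeds $M' \cap \omega_1$ for every $M' \in A_p$. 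Hence $q := (x_p, A_p \cup \{M\})$ is a condition with $q \le p$.

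To see that $q$ is strongly $N$-generic it suffices to show that for every $r \le q$ there is a condition $r \restriction N \in N$ such that every $s \le r \restriction N$ with $s \in \p \cap N$ is compatible with $r$: given a dense $D \subseteq \p \cap N$ and $r \le q$, density then yields $s \in D$ with $s \le r \restriction N$, and $s$ is compatible with $r$, so $D$ is predense below $q$. Fix $r \le q$; then $M \in A_r$, and we set $r \restriction N := (x_r \cap Sk(M),\, A_r \cap Sk(M))$. This lies in $N$, since its two coordinates are finite subsets of $Sk(M) = N \cap H(\omega_2)$, and it is a condition: $A_r \cap Sk(M)$ is coherent adequate by Lemma~2.7, requirements (1) and (3) are inherited from $r$, and (4) holds since $\sigma_{M_1,M_2}(a) \in Sk(M_2) \subseteq Sk(M)$ for $M_1, M_2 \in A_r \cap Sk(M)$.

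Now fix $s = (x_s,A_s) \le r \restriction N$ with $s \in \p \cap N$; I must build a common extension of $r$ and $s$. Since $A_r$ is coherent adequate with $M \in A_r$ and $A_r \cap Sk(M) \subseteq A_s \subseteq Sk(M)$, Proposition~3.5 applied over $M$ yields a coherent adequate set $C$ with $A_r \cup A_s \subseteq C$. Lemma~4.2 applied to $C$ and $x_r \cup x_s$ yields a finite set $y \supseteq x_r \cup x_s$ of triples closed under the isomorphisms of $C$, which is precisely requirement (4) of Definition~4.1 for $(y,C)$. Since $x_r, x_s \subseteq y$ and $A_r, A_s \subseteq C$, once we know $(y,C) \in \p$ it follows that $(y,C) \le r$ and $(y,C) \le s$; and requirement (2) is given by Proposition~3.5. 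So everything comes down to verifying requirements (1) and (3): that $y$ is pairwise nonoverlapping and that each $P \in C$ is nonoverlapping with each triple of $y$.

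This verification is the main obstacle. It is carried out by a case analysis according to whether the relevant models of $C$ and triples of $y$ come from $r$, come from $s$, or are $\sigma$-images under the amalgamating isomorphisms $\sigma_{M,N'}$ (for $N' \in A_r$ with $N' \cap \omega_1 = M \cap \omega_1$). The key points are: (a) $x_r \cup x_s$ is already pairwise nonoverlapping — if two triples share a first coordinate $\delta$, then one of them lies in $Sk(M)$, so its coordinates lie in $M$, and then either both triples lie in $x_r \cap Sk(M) \subseteq x_s$ and are nonoverlapping there, or the disjointness of their middle intervals is forced by $\sup(M \cap \delta)$ together with the identity $M \cap P = M \cap \beta_{M,P}$ for the relevant $P$; (b) the nonoverlapping relation is preserved by isomorphisms (as recorded after Definition~4.1), so combining this with requirement (4) for $r$ and for $s$ transports disjointness facts through the maps $\sigma_{M,N'}$ and reduces the mixed cases to comparisons inside a single Skolem hull; and (c) for image models $P = \sigma_{M,N'}(K) \subseteq N'$ one has $\sup(P \cap \delta) \le \sup(N' \cap \delta)$, which with the comparison-point facts of Lemma~1.2 settles the remaining cases. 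Once $(y,C)$ is confirmed to be a condition it witnesses the compatibility of $r$ and $s$, and the proof is complete.
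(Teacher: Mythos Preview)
Your outline is correct and follows the paper's approach almost exactly: restrict $r$ to $N$, extend inside $N$ to $s$, amalgamate the side conditions via Proposition~3.5 and close the triples, then verify (1) and (3) by cases. The one subtlety you should make explicit: you define $y$ as the Lemma~4.2 closure of $x_r \cup x_s$ under \emph{all} isomorphisms of $C$ (which gives requirement (4) for free), but your case analysis for (1) and (3) assumes every triple in $y$ is either from $x_r$, from $x_s$, or of the form $\sigma_{M,N'}(a)$ for a single amalgamating isomorphism over $M$. That these three forms exhaust the closure is itself a nontrivial fact --- it is precisely what the paper establishes as its step (4), there argued in the reverse direction (the paper defines $y$ by the explicit three-case formula, verifies (1)--(3) from that description, and only then proves $y$ equals the full closure). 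Without that equivalence your case split does not a priori cover all of $y$, so you should either prove it first or adopt the paper's ordering.
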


\begin{proof}
Fix a regular cardinal $\theta > \omega_2$, and let $N^*$ be a countable 
elementary substructure of $H(\theta)$ satisfying that 
$\p$ and $\pi$ are in $N^*$ and 
$N := N^* \cap \omega_2 \in \mathcal X$. 
Clearly there are club many such sets $N^*$. 
Note that since $\pi \in N^*$, 
$Sk(N) = \pi[N] = N^* \cap H(\omega_2)$. 
In particular, $\p \cap N^* \subseteq Sk(N)$.

Let $p$ be a condition in $N^* \cap \p$. 
Define $q = (x_p,A_p \cup \{ N \})$. 
Then $q$ is a condition and $q \le p$. 
We will prove that $q$ is strongly $N^*$-generic. 
So let $D$ be a dense subset of $N^* \cap \p$, and we will show that 
$D$ is predense below $q$.

Fix $r \le q$, and we will 
find a condition $w$ in $D$ which is compatible with $r$. 
Since $N \in A_r$, $A_r \cap Sk(N)$ is a coherent adequate set by Lemma 2.7. 
Let $v = (x_r \cap Sk(N),A_r \cap Sk(N))$. 
Then $v$ is a condition in $\p$. 
Since $D$ is dense in $N^* \cap \p$, 
we can fix $w$ which is an extension of $v$ in $D$. 
Then $A_r \cap Sk(N) \subseteq A_w \subseteq Sk(N)$.

Let $C$ be the set 
$$
\{ M \in A_r : N \cap \omega_1 \le M \cap \omega_1 \} \cup 
\{ \sigma_{N,N'}(K) : N' \in A_r, \ N \cap \omega_1 = N' \cap \omega_1, \ 
K \in A_w \}.
$$
By Proposition 3.5, 
$C$ is a coherent adequate set which contains $A_r \cup A_w$. 
Let $y$ be the set 
$$
(x_r \setminus Sk(N)) \cup 
\{ \sigma_{N,N'}(a) : N' \in A_r, \ N \cap \omega_1 = N' \cap \omega_1, \ 
a \in x_w \}.
$$
Let $s := (y,C)$.

We claim that $s$ is a condition and $s \le r, w$, which completes the 
proof since $w$ is in $D$. 
If $a$ is in $x_w$, then $\sigma_{N,N}(a) = a$ is in $y$. 
And if $a$ is in $x_r$, then either $a$ is in $x_r \setminus Sk(N)$, and 
hence is in $y$ by definition, 
or else $a$ is in $x_w$, and hence is in 
$y$ as just noted. 
So $x_r$ and $x_w$ are subsets of $y$. 
Also $A_r$ and $A_w$ are subsets of $C$. 
Thus if $s$ is a condition then $s \le r, w$.

(1) We show that $y$ is a set of nonoverlapping triples. 
So let $a_0$ and $a_1$ be in $y$. 
Let $a_0 = \langle \alpha_0, \gamma_0, \beta_0 \rangle$ and 
$a_1 = \langle \alpha_1, \gamma_1, \beta_1 \rangle$. 
If $\alpha_0 \ne \alpha_1$ then $a_0$ and $a_1$ are nonoverlapping, 
so assume that $\alpha_0 = \alpha_1$. 
If $a_0$ and $a_1$ are both in $x_r \setminus Sk(N)$ then they are nonoverlapping 
since $r$ is a condition.

Suppose that $a_0 \in x_r \setminus Sk(N)$ 
and $a_1 = \sigma_{N,N'}(a)$ for some $a \in x_w$ and 
$N'$ in $A_r$ which is isomorphic to $N$. 
Since $\alpha_0 \in N'$, either $\gamma_0$ and $\beta_0$ are in $N'$ 
or $\sup(N' \cap \alpha_0) < \gamma_0$. 
In the latter case, $\beta_1 < \gamma_0$ and hence $a_0$ and $a_1$ 
are nonoverlapping. 
In the former case, $a_0$ is in $Sk(N') \cap x_r$. 
Hence $a^* := \sigma_{N',N}(a_0)$ is in $Sk(N) \cap x_r \subseteq x_w$. 
So $a^*$ and $a$ are nonoverlapping. 
Therefore their images under $\sigma_{N,N'}$, namely $a_0$ and $a_1$, 
are nonoverlapping.

Now suppose that $a_0 = \sigma_{N,N'}(a_0^*)$ and 
$a_1 = \sigma_{N,N^*}(a_1^*)$, where $a_0^*$ and $a_1^*$ are 
in $x_w$ and $N'$ and $N^*$ are isomorphic in $A_w$. 
If $N' = N^*$, then since $a_0^*$ and $a_1^*$ are nonoverlapping, so 
are their images under $\sigma_{N,N^*}$, namely $a_0$ and $a_1$. 
Suppose $N \ne N'$. 
By Lemma 2.8, fix $\beta \in N \cap \Lambda$ such that 
$\sigma_{N,N'} \restriction Sk(N \cap \beta) = 
\sigma_{N,N^*} \restriction Sk(N \cap \beta)$ is an isomorphism of 
$N \cap \beta$ to $N' \cap N^*$. 
But $\alpha_0 = \alpha_1$ implies that 
$\beta_{N',N^*} > \alpha_0$. 
Hence $a_0$ and $a_1$ are in $Sk(N' \cap N^*)$. 
Since $a_0^*$ and $a_1^*$ are nonoverlapping, their images under 
$\sigma_{N,N'} \restriction Sk(N \cap \beta)$, namely $a_0$ and $a_1$, 
are also nonoverlapping.

(2) We already noted that $C$ is a finite coherent adequate set.

(3) Let $M$ be in $C$ and $a$ in $y$, and we will show that $M$ and $a$ 
are nonoverlapping. 
If $M \cap \omega_1 \ge N \cap \omega_1$ and $a$ is in $x_r \setminus Sk(N)$, 
then we are done since $r$ is a condition. 
Let $a = \langle \alpha, \gamma, \beta \rangle$. 
If $\alpha \notin M$, then $a$ and $M$ are nonoverlapping, so 
assume that $\alpha \in M$. 
We will show that either $\gamma$ and $\beta$ are in $M$ or 
$\sup(M \cap \alpha) < \gamma$.

Suppose that $M \cap \omega_1 \ge N \cap \omega_1$ and 
$a = \sigma_{N,N'}(a^*)$ for some $N'$ in $A_r$ isomorphic to $N$ and 
some $a^*$ in $x_w$. 
Since $M \cap \omega_1 \ge N' \cap \omega_1$, either 
$N' \cap \beta_{N',M} \in Sk(M)$ or $N' \cap \beta_{N',M} = M \cap \beta_{N',M}$. 
But $\alpha \in M \cap N'$, so $\beta_{N',M} > \alpha$. 
So $\gamma$ and $\beta$ are in $N \cap \beta_{N',M}$ and hence 
in $M$.

Assume that $M = \sigma_{N,N'}(K)$, where $N' \in A_r$ 
is isomorphic to $N$ and $K \in A_w$, and $a \in x_r \setminus Sk(N)$. 
Since $M \subseteq N'$, $\alpha \in N'$. 
So either $\gamma$ and $\beta$ are in $N'$ or 
$\sup(N' \cap \alpha) < \gamma$. 
In the latter case, clearly $\sup(M \cap \alpha) < \gamma$ and we are done. 
Otherwise $a$ is a member of $Sk(N')$. 
So $b := \sigma_{N',N}(a) \in x_r \cap Sk(N) \subseteq x_w$. 
So $K$ and $b$ are nonoverlapping. 
Hence their images under $\sigma_{N,N'}$, namely $M$ and $a$, 
are nonoverlapping.

In the final case, suppose that 
$M = \sigma_{N,N'}(K)$, where $N' \in A_r$ is isomorphic to $N$ 
and $K \in A_w$, and 
$a = \sigma_{N,N^*}(b)$ for some $N^*$ in $A_r$ isomorphic to $N$ and 
some $b$ in $x_w$. 
So $K$ and $b$ are nonoverlapping. 
If $N' = N^*$, then the images of $K$ and $b$ under 
$\sigma_{N,N'}$, namely $M$ and $a$, are nonoverlapping. 
Otherwise by Lemma 2.8 we can 
fix $\beta \in N \cap \Lambda$ such that 
$\sigma_{N,N'} \restriction Sk(N \cap \beta) = 
\sigma_{N,N^*} \restriction Sk(N \cap \beta)$ is an isomorphism of 
$N \cap \beta$ to $N' \cap N^*$. 
As $\alpha \in M$, $\alpha$ is in $N' \cap N^*$. 
Since $N' \cap N^*$ is an initial segment of $N'$ and $N^*$, 
$a \in Sk(N' \cap N^*)$. 
Hence $b$ is in $Sk(N \cap \beta)$. 
Therefore $a = \sigma_{N,N^*}(b) = \sigma_{N,N'}(b)$. 
Thus $a$ and $M$ are the images of $b$ and $K$ under 
$\sigma_{N,N'}$, and $b$ and $K$ are nonoverlapping. 
So $a$ and $M$ are nonoverlapping.

(4) By Lemma 4.2 it suffices to show that $y$ is equal to the set 
$$
x_r \cup x_w \cup \{ \sigma_{M,M'}(a) : M, M' \in C, \ 
M \cap \omega_1 = M' \cap \omega_1, \ 
a \in (x_r \cup x_w) \cap Sk(M) \}.
$$
Clearly $y$ is a subset of this set. 
It was noted above that $x_r \cup x_w \subseteq y$. 
Suppose that $M$ and $M'$ are 
isomorphic sets in $C$ and $a \in (x_r \cup x_w) \cap Sk(M)$. 
We will show that $a^* := \sigma_{M,M'}(a) \in y$.

Suppose that $M \cap \omega_1 > N \cap \omega_1$. 
Then also $M' \cap \omega_1 > N \cap \omega_1$. 
If $a$ is in $x_r$, then we are done since $r$ is a condition. 
Suppose that $a$ is in $x_w$. 
Fix $N^*$ in $Sk(M)$ which is isomorphic to $N$ such that 
$N \cap \beta_{M,N} = N^* \cap \beta_{M,N}$. 
Then $a \in Sk(N \cap \beta_{M,N}) = Sk(N^* \cap \beta_{M,N})$. 
Let $P := \sigma_{M,M'}(N^*)$. 
So $\sigma_{M,M'} \restriction Sk(N^*) = \sigma_{N^*,P}$. 
By Lemma 2.8, $\sigma_{M,M'}(a) = \sigma_{N^*,P}(a) = 
\sigma_{N,P}(a)$, which is in $y$ by definition.

Now assume that $M \cap \omega_1 = N \cap \omega_1$. 
Then $M$, $M'$, and $N$ are all isomorphic. 
If $a \in x_r$ then we are done since $r$ is a condition. 
Suppose that $a \in x_w$. 
Since $a \in Sk(M) \cap Sk(N) = Sk(M \cap N)$, 
by Lemma 2.8, $\sigma_{M,M'}(a) = \sigma_{N,M'}(a)$, 
which is in $y$ by definition. 

Finally, suppose that $M \cap \omega_1 < N \cap \omega_1$. 
By the definition of $C$, 
$M = \sigma_{N,N'}(K)$ for some $N'$ in $A_r$ 
which is isomorphic to $N$ and some $K \in A_w$. 
Then also $M' = \sigma_{N,N^*}(P)$ for some $N^*$ in $A_r$ which is 
isomorphic to $N$ and some $P \in A_w$. 
Since $a$ is in $Sk(M)$, $a$ is in $Sk(N')$. 
We claim that $b := \sigma_{N',N}(a)$ is in $x_w$. 
If $a \in x_r$, then since $r$ is a condition, $b$ is in 
$x_r \cap Sk(N)$ and hence in $x_w$. 
Otherwise $a$ is in $x_w$ and hence in $Sk(N') \cap Sk(N) = 
Sk(N' \cap N)$. 
But $\sigma_{N',N} \restriction Sk(N' \cap N)$ is the 
identity, so $b = a$.

We have that $\sigma_{N',N} \restriction Sk(M) = \sigma_{M,K}$ 
and $\sigma_{N^*,N} \restriction Sk(M') = \sigma_{M',P}$. 
And $\sigma_{M,M'} = \sigma_{P,M'} \circ \sigma_{K,P} \circ 
\sigma_{M,K} = 
\sigma_{N,N^*} \circ \sigma_{K,P} \circ (\sigma_{N',N} \restriction Sk(M))$. 
So $\sigma_{M,M'}(a) = 
\sigma_{N,N^*}(\sigma_{K,P}(\sigma_{N',N}(a))) = 
\sigma_{N,N^*}(\sigma_{K,P}(b))$. 
Since $b \in x_w$ and $K$ and $P$ are in $A_w$, 
$\sigma_{K,P}(b)$ is in $x_w$. 
Hence $\sigma_{M,M'}(a) = \sigma_{N,N^*}(\sigma_{K,P}(b))$ 
is in $y$ by definition.
\end{proof}

\begin{proposition}
The forcing poset $\p$ is $\omega_2$-c.c.
\end{proposition}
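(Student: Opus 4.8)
Since $2^\omega = \omega_1$ and $2^{\omega_1} = \omega_2$ there are only $\omega_2$ many conditions, so it is enough to show that an arbitrary family $\{ p_i : i < \omega_2 \}$ of conditions contains two compatible members. The plan is a $\Delta$-system argument whose nonroot parts are spread out into a ``staircase'', followed by an appeal to Proposition 3.6 to amalgamate the side conditions and to Lemma 4.2 together with the argument of Proposition 4.3 to amalgamate the triples. To each $p_i$ I would attach the countable set
$$
S_i := \bigcup \{ Sk(M) : M \in A_{p_i} \} \cup \{ M, Sk(M) : M \in A_{p_i} \} \cup A_{p_i} \cup x_{p_i} \cup \{ \alpha, \gamma, \beta : \langle \alpha,\gamma,\beta\rangle \in x_{p_i} \}
$$
and the structure $\mathfrak S_i := (S_i, \in, \pi, \mathcal X, \Lambda, A_{p_i}, x_{p_i})$, where $\pi$ is taken as a binary and the remaining four items as unary predicates. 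A countable structure in a finite language realizes one of at most $2^\omega = \omega_1$ isomorphism types, and the family of countable sets $\{ S_i : i < \omega_2 \}$ has a $\Delta$-system refinement (using $\omega_1^\omega = \omega_1 < \omega_2$); so, after reindexing, I may assume that all $\mathfrak S_i$ are isomorphic, that $S_i \cap S_j = R$ for a fixed root $R$ and all $i \ne j$, that there are isomorphisms $h_{i,j} : \mathfrak S_i \to \mathfrak S_j$ which are the identity on $R$, and that, with $\beta^* := \min(\Lambda \setminus (\sup(R \cap \omega_2)+1))$, every nonroot set $S_i \setminus R$ has all its ordinals above $\beta^*$. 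Since any two distinct nonroot sets are disjoint, a further recursion produces an increasing assignment $i \mapsto \delta_i \in \Lambda$ with $\beta^* < \delta_i$ and with $\sup(S_j \cap \omega_2) < \delta_i \le \min((S_i \setminus R) \cap \omega_2)$ whenever $j < i$.

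Now fix $j < i$ and set $p := p_i$, $q := p_j$, $\beta := \delta_i$. Since $h_{i,j}$ preserves $\in$, $\pi$, $\mathcal X$, $\Lambda$ and fixes $R$, a model $M \in A_p$ satisfies $M \subseteq \beta$ iff $M \subseteq R$ iff $h_{i,j}[M] = M$, and otherwise $M \setminus \beta \ne \emptyset$; thus, in the notation of Proposition 3.6, $A^- := \{ M \in A_p : M \subseteq \beta \}$ equals $\{ M \in A_q : M \subseteq \beta \}$, and $A_q = A^- \cup \{ h_{i,j}[M] : M \in A^+ \}$ where $A^+ := \{ M \in A_p : M \setminus \beta \ne \emptyset \}$. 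I would apply Proposition 3.6 to $A := A_p$, to $\beta$ and $\beta^*$, and to the map $M \mapsto M' := h_{i,j}[M]$ on $A^+$. Here $\beta$ and $\beta^*$ are as required, since every ordinal of a member of $A_p$ that lies below $\beta$ lies in $R$, so $\sup(M \cap \beta) \le \sup(R \cap \omega_2) < \beta^*$. Clause (1) holds because $h_{i,j} \restriction Sk(M)$ is an isomorphism of $\mathfrak M$ onto the corresponding structure on $Sk(M')$ and hence equals $\sigma_{M,M'}$, while $M \cap \beta^* \subseteq R$ is fixed and the ordinals of $M$ above $\beta^*$ are sent to ordinals of $M'$ above $\beta^*$, so $M \cap \beta^* = M' \cap \beta^*$. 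Clause (2) holds because $h_{i,j}$ transports the $Sk$-structure, and then clause (3) holds because $\sigma_{M,M'}(K) = \sigma_{M,M'}[K] = h_{i,j}[K] = K'$ for countable $K \in Sk(M)$. Clause (4) is immediate, since $A^- \cup \{ M' : M \in A^+ \} = A_q$ is a coherent adequate set. Proposition 3.6 then gives that $C := A_p \cup \{ M' : M \in A^+ \} = A_p \cup A_q$ is a coherent adequate set.

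For the triples I would form
$$
y := x_p \cup x_q \cup \{ \sigma_{N,N'}(a) : N, N' \in C, \ N \cap \omega_1 = N' \cap \omega_1, \ a \in (x_p \cup x_q) \cap Sk(N) \},
$$
so that by Lemma 4.2 the set $y$ is closed under $\sigma_{N,N'}$ for all isomorphic $N, N' \in C$, and then argue, essentially as in steps (1) and (3) of the proof of Proposition 4.3, that $y$ is a finite pairwise nonoverlapping set of triples which is nonoverlapping with every member of $C$. The genuinely new cases are: a nonroot triple of $p$ and a nonroot triple of $q$ have distinct first coordinates, as these lie in the disjoint sets $(S_i \setminus R) \cap \omega_2$ and $(S_j \setminus R) \cap \omega_2$; a nonroot triple of $p$ is nonoverlapping with each nonroot model $h_{i,j}[M]$ of $q$, since its first coordinate lies outside $S_j \supseteq h_{i,j}[M]$; and every configuration involving a root triple, a root model, or a pair of isomorphic models of $C$ reduces, via $h_{i,j} \restriction R = \mathrm{id}$ and the fact (noted before Definition 4.1) that isomorphisms preserve nonoverlapping triples and nonoverlapping triple/model pairs, to a configuration already handled in Proposition 4.3. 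Granting this, $(y, C)$ is a condition with $(y,C) \le p_i$ and $(y,C) \le p_j$, so $\{ p_i : i < \omega_2 \}$ is not an antichain.

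I expect the main obstacle to be the setup of the $\Delta$-system: the structure $\mathfrak S_i$ must be made rich enough — carrying $\pi$, $\mathcal X$, $\Lambda$, and the models and triples themselves — that the purely order-theoretic isomorphism $h_{i,j}$ automatically restricts, on each $Sk(M)$, to the Skolem-hull isomorphism $\sigma_{M,M'}$ demanded by Proposition 3.6, and one must simultaneously arrange the ``staircase'' so that $R$ and $\beta^*$ both lie below a common $\delta_i \in \Lambda$ which in turn lies below all nonroot material of $p_i$. Once the $\Delta$-system is set up this way, the appeal to Proposition 3.6 and the transcription of the triple bookkeeping from Proposition 4.3 are routine.
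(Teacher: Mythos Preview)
Your approach is a standard alternative to the paper's and reduces to Proposition~3.6 in the same way. The paper, rather than thinning an $\omega_2$-family by a $\Delta$-system, fixes a single $N^* \prec H(\theta)$ of size $\omega_1$ with $\beta := N^* \cap \omega_2 \in \Lambda$ and proves that the empty condition is $N^*$-generic: given $q$ in a dense $D \in N^*$, the paper records the transitive collapses $\overline{\mathfrak M}_i$ of the models in $A_q^+$, the membership relation among them, and the images of the high triples under the collapsing maps --- all of which lie in $H(\omega_1) \subseteq N^*$ --- and then reflects via elementarity to find $M_i', a_j' \in N^*$ with the same data. The map $M_i \mapsto M_i'$ then satisfies the hypotheses of Proposition~3.6 exactly as in your argument, and the paper takes $y := x_q \cup \{a_0',\ldots,a_m'\}$. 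This route yields the slightly stronger genericity statement and avoids the $\Delta$-system bookkeeping.

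Two technical points in your version need tightening. First, as written $\mathfrak S_i$ is not extensional: the Kuratowski constituents of a triple need not lie in $S_i$, so distinct triples can have the same $S_i$-elements, and nothing yet forces $h_{i,j}$ to be unique, to act coordinatewise on triples, or to fix $R$. You should close $S_i$ under the transitive closures of the triples (or add a predicate for ``$a = \langle \alpha,\gamma,\beta\rangle$'') and then thin once more so that the Mostowski collapses agree on $R$. Second, with your setup the closure $y$ is in fact just $x_p \cup x_q$: for isomorphic $N \in A^+$ and $N' \in A_q$ one has $\sigma_{N,N'} = \sigma_{h_{i,j}(N),N'} \circ (h_{i,j} \restriction Sk(N))$, and $h_{i,j}$ carries $x_p \cap Sk(N)$ into $x_q$, after which condition~(4) for $q$ keeps you inside $x_q$; the remaining cases are symmetric or reduce to root triples. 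Once you observe this, requirements (1), (3), (4) follow from the short case split you sketch, and the appeal to the more intricate bookkeeping of Proposition~4.3 is both unnecessary and not literally applicable, since that argument treats the countable amalgamation, where $y$ genuinely contains new triples.
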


\begin{proof}
Fix $\theta > \omega_2$ regular and let 
$N^*$ be an elementary substructure of $H(\theta)$ of size $\omega_1$ 
such that $\pi$, $\mathcal X$, $\Lambda$, and $\p$ are in $N^*$ and 
$\beta := N^* \cap \omega_2 \in \Lambda$. 
Since $\pi \in N^*$, $N^* \cap H(\omega_2) = \pi[N^* \cap \omega_2] 
= \pi[\beta] = Sk(\beta)$. 
In particular, $N^* \cap \p \subseteq Sk(\beta)$. 
Note that since $\mathcal X \cap P(\beta) \subseteq Sk(\beta)$, 
$N^* \cap \mathcal X = P(\beta) \cap \mathcal X = Sk(\beta) \cap \mathcal X$.

We will prove that the empty condition is $N^*$-generic. 
This implies that $\p$ is $\omega_2$-c.c.\ by the following argument. 
Suppose for a contradiction that $\p$ has a maximal antichain $S$ of size 
at least $\omega_2$. 
By elementarity we may assume that $S$ is in $N^*$. 
Since $N^*$ has size $\omega_1$, we can fix a condition 
$s \in S \setminus N^*$. 
Let $D$ be the set of conditions which are below some member of $S$. 
Then $D$ is dense and $D$ is in $N^*$. 
Since the empty condition is $N^*$-generic, $N^* \cap D$ is predense in $\p$. 
So $s$ is compatible with some member of $N^* \cap D$. 
By elementarity and the definition of $D$, $s$ is compatible with some 
member of $N^* \cap S$, which contradicts that $S$ 
is an antichain.

Note that since $2^\omega = \omega_1$ and $\omega_1 \subseteq N^*$, 
$H(\omega_1) \subseteq N^*$. 
Fix a dense open set $D$ in $N^*$, and we will show that 
$D \cap N^*$ is predense in $\p$. 
Let $p$ be a given condition. 
Extend $p$ to $q$ which is in $D$.

Let $A^- := \{ M \in A_q : M \subseteq \beta \}$.  
Let $A^+ := \{ M \in A_q : M \setminus \beta \ne \emptyset \} = 
\{ M_0, \ldots, M_k \}$. 
Since $\Lambda \in N^*$, $\Lambda \cap \beta$ is cofinal in $\beta$. 
Fix $\beta^*$ in $\Lambda \cap \beta$ such that for all $M \in A_q$, 
$\sup(M \cap \beta) < \beta^*$, and for all 
$\langle \alpha, \gamma, \zeta \rangle$ 
in $x_r \cap Sk(\beta)$, $\alpha < \beta^*$. 
Let $R$ be the set of pairs $\langle i, j \rangle$ in $k+1$ such that 
$M_i \in Sk(M_j)$.
Note that the objects $A^-$, 
$M_0 \cap \beta, \ldots, M_k \cap \beta$, $\beta^*$, and $R$ are in $N^*$.

For each $i = 0, \ldots, k$, let $\overline{\mathfrak M}_i$ denote the 
transitive collapse of the structure 
$\mathfrak M_i = (Sk(M_i),\in,\pi_{M_i},{\mathcal X}_{M_i},\Lambda_{M_i})$. 
And for each $\langle i, j \rangle$ in $R$, let 
$J_{\langle i, j \rangle} := \sigma_{M_j}(M_i)$. 
Note that each $\overline{\mathfrak M}_i$ is in $H(\omega_1)$ and 
hence in $N^*$, and therefore each $J_{\langle i, j \rangle}$ is in $N^*$.

Let $a_0, \ldots, a_m$ enumerate the triples in $x_q$ whose first component 
is larger than $\beta$. 
Let $S$ be the set of pairs $\langle i, j \rangle$ where 
$i \le m$, $j \le k$, and $a_i \in Sk(M_j)$. 
For each $\langle i, j \rangle$ in $S$, 
let $b_{\langle i,j \rangle} = \sigma_{M_j}(a_i)$.

As noted above, the following parameters all belong to $N^*$: 
$x_q \cap Sk(\beta)$, $A^-$, $D$, $M_0 \cap \beta, \ldots, M_k \cap \beta$, 
$\pi$, $\mathcal X$, $\Lambda$, $\mathfrak M_0, \ldots, \mathfrak M_k$, 
$R$, $J_{\langle i, j \rangle}$ for each $\langle i, j \rangle \in R$, 
$\beta^*$, $S$, and $b_{\langle i, j \rangle}$ for each $\langle i, j \rangle \in S$. 
Let $\varphi_{x_0,\ldots,x_k,y_0,\ldots,y_m}$ be the formula in the 
language of set theory with constants for these parameters which expresses 
the following:
\begin{enumerate}
\item the pair 
$$
((x_q \cap Sk(\beta)) \cup \{ y_0, \ldots, y_m \}, 
A^- \cup \{ x_0,\ldots,x_k \})
$$
is in $D$;
\item for each $i = 0, \ldots, k$, $x_i \cap \beta^* = M_i \cap \beta$;
\item for each $i = 0, \ldots, k$, the transitive collapse of 
$(Sk(x_i),\in,\pi_{x_i},{\mathcal X}_{x_i},\Lambda_{x_i})$ 
is equal to $\overline{\mathfrak M}_i$;
\item for each $i, j < k+1$, $x_i \in Sk(x_j)$ iff $\langle i, j \rangle \in R$, 
and in that case, $\sigma_{x_j}(x_i) = J_{\langle i, j \rangle}$;
\item for each $i = 0, \ldots, m$, the first component of $y_i$ is above $\beta^*$;
\item for each $i \le m$ and $j \le k$, 
$y_i \in Sk(x_j)$ iff $\langle i, j \rangle \in S$, and in that case, 
$\sigma_{x_j}(y_i) = b_{\langle i, j \rangle}$.
\end{enumerate}
Note that $H(\theta) \models \varphi[M_0, \ldots, M_k, a_0, \ldots, a_m]$. 
By elementarity we can find 
$M_0', \ldots, M_k'$ and $a_0', \ldots, a_m'$ in $N^*$ 
such that 
$H(\theta) \models \varphi[M_0',\ldots,M_k',a_0',\ldots,a_m']$.

Let $w$ denote the pair 
$$
((x_q \cap Sk(\beta)) \cup \{ a_0', \ldots, a_m' \}, 
A^- \cup \{ M_0',\ldots,M_k'\}).
$$
Then $w$ is in $D$ by (1).

Let us verify that the assumptions of Proposition 3.6 hold for the map 
which sends $M$ to $M'$ for each $M \in A^+$. 
Let $M$ and $K$ be in $A^+$. 
(3) implies that $\mathfrak M$ and 
${\mathfrak M}'$ have the same 
transitive collapse and hence are isomorphic, and (2) implies that 
$M' \cap \beta^* = M \cap \beta = M \cap \beta^*$. 
Let $M = M_j$ and $K = M_i$ for $i, j \le k$. 
By (4), $K \in Sk(M)$ iff $\langle i, j \rangle \in R$ iff $K' \in Sk(M')$, and in that 
case, $\sigma_{M}(K) = J_{\langle i, j \rangle}$ by definition and 
$\sigma_{M'}(K') = J_{\langle i, j \rangle}$ by (4).
But $\sigma_{M,M'} = \sigma_{M'}^{-1} \circ \sigma_M$. 
So $\sigma_{M,M'}(K) = \sigma_{M'}^{-1}(\sigma_M(K)) = 
\sigma_{M'}^{-1}(J_{\langle i, j \rangle}) = K'$. 
Finally, $A^- \cup \{ M_0', \ldots, M_k' \}$ is a coherent adequate set by (1).
It follows by Proposition 3.6 that the set 
$$
C := A_q \cup \{ M' : M \in A^+ \}
$$
is a coherent adequate set.

By (6), for each $i \le m$ and $j \le k$, $a_i \in Sk(M_j)$ iff 
$\langle i, j \rangle \in J$ iff $a_i' \in Sk(M_j')$. 
Also if $a_i \in Sk(M_j)$, then 
$\sigma_{M_j,M_j'}(a_i) = \sigma_{M_j^{-1}}(\sigma_{M_j}(a_i)) = 
\sigma_{M_j^{-1}}(b_{\langle i, j \rangle}) = a_j'$. 
So $\sigma_{M_i,M_i'}(a_j) = a_j'$. 
Let 
$$
y := x_q \cup \{ a_j' : j = 0, \ldots, m \}.
$$
By (5) the first component of each $a_j'$ is above $\beta^*$. 
Hence any element of $y$ is in $x_q \cap Sk(\beta)$, 
$\{ a_j' : j = 0, \ldots, m \}$, or $x_q \setminus Sk(\beta)$ depending on 
whether its first component is in $[0,\beta^*)$, 
$[\beta^*,\beta)$, or $[\beta^*,\omega_2)$.

We claim that $s = (y,C)$ is a condition. 
Then clearly $s \le r, w$, and since $w$ is in $D$, we are done.

(1) Let $\langle \alpha, \gamma, \zeta \rangle$ and 
$\langle \alpha', \gamma', \zeta' \rangle$ be in $y$, and we will show 
that they are nonoverlapping. 
If these triples are either both in $x_q$ or both in $x_w$, then we are done. 
Otherwise we may assume that $\langle \alpha, \gamma, \zeta \rangle$ 
is equal to $a_i$ for some $i = 0, \ldots, m$ and 
$\langle \alpha', \gamma', \zeta' \rangle$ is equal to $a_j'$ for some 
$j = 0, \ldots, m$. 
Then $\alpha' < \beta \le \alpha$, so these triples are nonoverlapping.

(2) The set $C$ is a finite coherent adequate set as previously noted.

(3) Let $M$ be in $C$ and $\langle \alpha, \gamma, \zeta \rangle$ in $y$, and 
we will show that they are nonoverlapping. 
If $\alpha$ is not in $M$, then we are done, so 
assume that $\alpha \in M$. 
If these objects are either both in $q$ or both in $w$, then we are done. 
Assume that $M \in C \setminus Sk(\beta)$ and 
$\langle \alpha, \gamma, \zeta \rangle \in y \cap Sk(\beta)$. 
Since $\alpha \in M \cap \beta$, $\alpha$ is in $M' \cap \beta^*$. 
But the triple and $M'$ are nonoverlapping, and since $\alpha < \beta^*$ 
this clearly implies that the triple and $M$ are nonoverlapping. 
Next assume that $M \in C \cap Sk(\beta)$ and 
$\langle \alpha, \gamma, \zeta \rangle \in y \setminus Sk(\beta)$. 
Then $\alpha \ge \beta$. 
But this is impossible since $M \subseteq \beta$.

(4) Let $M$ and $K$ be isomorphic sets 
in $C$ and $a \in y \cap Sk(M)$. 
We will show that $\sigma_{M,K}(a) \in y$. 
Let $a = \langle \alpha, \gamma, \zeta \rangle$.

Suppose that $M \in A_q$. 
Then $\alpha \notin [\beta^*,\beta)$, hence $a \in x_q$. 
If $K$ is in $A_q$ we are done; otherwise $K = P'$ for some $P \in A^+$. 
Then $\sigma_{M,P}(a) \in x_q \cap Sk(P)$. 
Assume that $\sigma_{M,P}(\alpha) \ge \beta$. 
Then $\sigma_{M,P}(a) = a_i$ for some $i \le m$. 
So $\sigma_{P,P'}(a) = a_i'$. 
So $\sigma_{M,K}(a) = \sigma_{P,P'}(\sigma_{M,P}(a)) = a_i' \in y$. 
Now assume that $\sigma_{M,P}(\alpha) < \beta^*$. 
Then $\sigma_{P,P'}(\sigma_{M,P}(a)) = 
\sigma_{M,P}(a)$ since $\sigma_{P,P'} \restriction \beta^*$ is 
the identity. 
So $\sigma_{M,K}(a) = \sigma_{P,P'}(\sigma_{M,P}(a)) = \sigma_{M,P}(a)$, 
which is in $y$.

Now suppose that $M = L'$ for some $L \in A^+$. 
Then $M \in A_w$.  
So $a$ is in $(x_q \cap Sk(\beta)) \cup \{ a_0', \ldots, a_m' \} = x_w$. 
If $K \in A_w$ then we are done since $w$ is a condition. 
Otherwise $K \in C \setminus Sk(\beta)$. 
Then $K' \in A_w$, so $\sigma_{M,K'}(a) \in x_w$. 
If $\sigma_{M,K'}(a) < \beta^*$, then 
$\sigma_{K',K}(\sigma_{M,K'}(a)) = \sigma_{M,K'}(a)$ since 
$\sigma_{K',K} \restriction \beta^*$ is the identity. 
Hence $\sigma_{M,K}(a) = \sigma_{K',K}(\sigma_{M,K'}(a)) = 
\sigma_{M,K'}(a)$, which is in $y$. 
Otherwise $\sigma_{M,K'}(a)$ is equal to $a_i'$ for some $i = 0, \ldots, m$. 
So $a_i' \in Sk(K')$, which implies that $a_i \in Sk(K)$ and 
$\sigma_{K,K'}(a_i) = a_i'$. 
Hence $\sigma_{M,K}(a) = \sigma_{K',K}(\sigma_{M,K'}(a)) = 
\sigma_{K',K}(a_i') = a_i$, which is in $y$.
\end{proof}

This completes the proof that $\p$ preserves cardinals.

\bigskip

Recall that 
for each $\alpha \in \Lambda$, $\dot c_\alpha$ is a $\p$-name such 
that $\p$ forces
$$
\dot c_\alpha = \{ \gamma : \exists p \in \dot G \ \exists \beta \ 
\langle \alpha, \gamma, \beta \rangle \in x_p \}.
$$
We will show that $\p$ forces that 
$\dot c_\alpha$ is a cofinal subset of $\alpha$. 
Property (3) in the definition of $\p$ will imply that 
$\dot c_\alpha$ is forced to have order type $\omega_1$. 
Property (4) will imply that $\p$ forces that whenever $\xi$ 
is a common limit point of $\dot c_\alpha$ and 
$\dot c_{\alpha'}$, then 
$\dot c_\alpha \cap \xi = \dot c_{\alpha'} \cap \xi$. 

\begin{lemma}
For each $\alpha \in \Lambda$, $\p$ forces that $\dot c_\alpha$ is a cofinal 
subset of $\alpha$ with order type $\omega_1$.
\end{lemma}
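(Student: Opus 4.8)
The plan is to fix $\alpha \in \Lambda$, a condition $p$, and an ordinal $\delta < \alpha$, and to find an extension $q \le p$ together with a triple $\langle \alpha, \gamma, \zeta \rangle \in x_q$ with $\gamma > \delta$; this proves cofinality. For order type, I would show $\p$ forces $\dot c_\alpha$ to be a subset of $\alpha$ of size $\le \omega_1$ (trivially, since $\alpha < \omega_2$ and conditions are finite), and that it is \emph{unbounded} in $\alpha$ together with the square-coherence feature will be exploited only in the companion statements; here the real content is just cofinality plus the fact that $\dot c_\alpha$ has size exactly $\omega_1$. Since $\cf(\alpha) = \omega_1$ and $\dot c_\alpha$ is forced cofinal in $\alpha$, once we know it is forced to be \emph{of size} $\omega_1$ (not $\omega$) we get order type $\omega_1$; and size $\omega$ is impossible because an $\omega$-sized cofinal subset of $\alpha$ would have order type $\omega$, contradicting $\cf(\alpha) = \omega_1$. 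So the crux reduces to: (a) density of adding a triple $\langle \alpha, \gamma, \zeta \rangle$ with $\gamma$ above any prescribed $\delta < \alpha$, and (b) that no condition can force $\dot c_\alpha$ bounded in $\alpha$.

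For (a), given $p = (x_p, A_p)$ and $\delta < \alpha$, I would first enlarge $\delta$ so that $\delta \ge \sup(M \cap \alpha)$ for every $M \in A_p$ with $\alpha \in M$; this is possible because each such $M$ is countable and $\cf(\alpha) = \omega_1$, so the supremum of these finitely many countable sets is below $\alpha$. Then pick $\delta < \gamma < \zeta < \alpha$ with $\gamma$ and $\zeta$ also avoiding all the intervals used by existing triples in $x_p$ with first component $\alpha$ (again finitely many constraints), and also arrange $\gamma, \zeta \in M$ for every $M \in A_p$ with $\alpha \in M$ and $M \cap \alpha$ cofinal in $\ldots$ — actually, the cleanest choice is $\gamma, \zeta \in \bigcap \{ M \in A_p : \alpha \in M \}$; but since these models may have distinct traces, instead I would simply choose $\gamma, \zeta$ above $\sup(M \cap \alpha)$ for \emph{every} $M \in A_p$, which makes $\langle \alpha, \gamma, \zeta \rangle$ nonoverlapping with every such $M$ by the second clause of the nonoverlapping definition for triples and models. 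We then set $x_q := x_p \cup \{ \langle \alpha, \gamma, \zeta \rangle \}$ together with its images under the isomorphisms $\sigma_{M,M'}$ required by clause (4) of Definition 4.1 — but since $\langle \alpha, \gamma, \zeta \rangle \notin Sk(M)$ for any $M$ in $A_p$ (as $\gamma > \sup(M \cap \alpha)$ forces $\gamma \notin M$ whenever $\alpha \in M$, and if $\alpha \notin M$ then the triple is automatically outside $Sk(M)$), no new images are forced, so $x_q = x_p \cup \{ \langle \alpha, \gamma, \zeta \rangle \}$ and $A_q = A_p$. One checks the four clauses of Definition 4.1 hold: pairwise nonoverlapping of triples (by the interval choice), nonoverlapping of the new triple with each $M \in A_p$ (by the supremum choice), and clause (4) vacuously as just argued.

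For (b), suppose toward a contradiction some $q \le p$ forces $\dot c_\alpha \subseteq \eta$ for some $\eta < \alpha$. By (a), densely below $q$ one adds a triple $\langle \alpha, \gamma, \zeta \rangle$ with $\gamma$ arbitrarily large below $\alpha$, in particular with $\gamma > \eta$; such an extension forces $\gamma \in \dot c_\alpha$, contradicting $\dot c_\alpha \subseteq \eta$. Hence $\p$ forces $\dot c_\alpha$ cofinal in $\alpha$. Since $\cf(\alpha) = \omega_1$, any cofinal subset of $\alpha$ has order type of cofinality $\omega_1$, hence of size $\ge \omega_1$; and as remarked, in the $\p$-extension $\omega_1$ is preserved (Proposition 4.3) so $\alpha$ still has cofinality $\omega_1$ there, forcing $\dot c_\alpha$ to be uncountable; combined with $|\dot c_\alpha| \le |\alpha| = \omega_1$ and the fact that $\dot c_\alpha$ being cofinal of cofinality $\omega_1$ in $\alpha$ cannot have order type $> \omega_1$ without its own cofinality exceeding $\omega_1$ — actually the order type could a priori be any ordinal of cofinality $\omega_1$ and size $\omega_1$, so the precise order type $\omega_1$ needs the \emph{club-like} closure that clause (3) of Definition 4.1 provides. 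The main obstacle is precisely this last point: showing the order type is exactly $\omega_1$ rather than merely an uncountable ordinal below $\omega_2$. I expect the argument to run: clause (3) forces that whenever $M \in A_q$ with $\alpha \in M$, the triples $\langle \alpha, \gamma, \zeta \rangle \in x_q \cap Sk(M)$ have their $\gamma$'s below $\sup(M \cap \alpha)$, and by a density argument one shows $\dot c_\alpha \cap \sup(M \cap \alpha)$ is forced to be cofinal in $\sup(M \cap \alpha)$ — so $\dot c_\alpha$ is forced to be \emph{continuous}, i.e. closed in its supremum, and a closed unbounded subset of an ordinal of cofinality $\omega_1$ has order type exactly $\omega_1$. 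Assembling these observations, $\p$ forces $\dot c_\alpha$ to be a cofinal subset of $\alpha$ of order type $\omega_1$.
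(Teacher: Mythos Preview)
Your cofinality argument is correct and essentially matches the paper's: choose $\gamma$ above $\sup(M\cap\alpha)$ for all $M\in A_p$ and above all existing $\alpha$-triples, then $q=(x_p\cup\{\langle\alpha,\gamma,\gamma+1\rangle\},A_p)$ is a condition, with clause~(4) vacuous for the new triple since it lies in no $Sk(M)$.

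The order-type argument, however, has a genuine gap. Your final claim, that ``a closed unbounded subset of an ordinal of cofinality $\omega_1$ has order type exactly $\omega_1$,'' is false: take $\alpha=\omega_1\cdot 2$ and $c=\alpha$. So even if you could show $\dot c_\alpha$ is closed (which you do not actually establish, and which the paper never claims), this would not bound the order type by $\omega_1$. You correctly identify that clause~(3) is the relevant ingredient, but you do not extract the right consequence from it.

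The missing idea is this: to rule out order type greater than $\omega_1$, suppose some $p$ forces $\dot c_\alpha\cap\delta$ uncountable for some $\delta<\alpha$. Choose $M\in\mathcal X$ with $p,\alpha,\delta\in Sk(M)$ and form $q=(x_p,A_p\cup\{M\})$, which is a condition by Lemma~2.6. Now extend $q$ to $r$ witnessing some $\gamma\in\dot c_\alpha\cap\delta$ with $\gamma\notin M$ (possible since $M$ is countable). Clause~(3) applied to $M\in A_r$ and the triple $\langle\alpha,\gamma,\beta\rangle\in x_r$ gives: since $\alpha\in M$ but $\gamma\notin M$, we must have $\sup(M\cap\alpha)<\gamma$. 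But $\delta\in M\cap\alpha$ and $\gamma<\delta$, so $\sup(M\cap\alpha)\ge\delta>\gamma$, a contradiction. The point is not that $\dot c_\alpha$ is closed, but that adding $M$ to the side conditions \emph{forces} $\dot c_\alpha\cap\delta\subseteq M$, hence countable.
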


\begin{proof}
First we show that $\dot c_\alpha$ is forced to be a cofinal 
subset of $\alpha$. 
Let $p$ be a condition and $\delta < \alpha$. 
Choose an ordinal $\gamma$ with $\delta < \gamma < \alpha$ 
such that for all $M \in A_p$, $\sup(M \cap \alpha) < \gamma$, and for 
all triples in $x_p$ of the form $\langle \alpha, \tau, \beta \rangle$, 
$\tau$ and $\beta$ are less than $\gamma$. 
Define $q = (x_p \cup \{ \langle \alpha, \gamma, \gamma+1 \rangle \}, A_p )$. 
It is easy to check that $q$ is a condition, and clearly $q \le p$. 
Also $q$ forces that $\dot c_\alpha \setminus \delta$ is nonempty. 
Thus $\p$ forces that $\dot c_\alpha$ is a cofinal subset of $\alpha$.

Suppose for a contradiction that a condition $p$ forces that $\dot c_\alpha$ 
has order type greater than $\omega_1$. 
Extending $p$ if necessary, assume that for some $\delta < \alpha$, 
$p$ forces that $\dot c_\alpha \cap \delta$ has size $\omega_1$. 
Fix $M$ in $\mathcal X$ such that $p$, $\alpha$, and $\delta$ are in $Sk(M)$. 
Then easily $q = (x_p,A_p \cup \{M\})$ is a condition. 
Since $q$ forces that $\dot c_\alpha \cap \delta$ is uncountable, 
we can extend $q$ to $r$ such that for some triple 
$\langle \alpha, \gamma, \beta \rangle$ in $x_r$, 
$\gamma$ is $\delta \setminus M$. 
Since $M \in A_r$ and $\alpha \in M$, $\sup(M \cap \alpha) < \gamma$, 
which contradicts that $\delta \in M$.
\end{proof}

Now we prove that the sequence of $\dot c_\alpha$'s is coherent. 
Namely, we will show that $\p$ forces that 
whenever $\xi$ is a common limit point of 
$\dot c_\alpha$ and $\dot c_{\alpha'}$, then 
$\dot c_\alpha \cap \xi = \dot c_{\alpha'} \cap \xi$.

\begin{lemma}
Let $\alpha$ be in $\Lambda$, $\xi < \alpha$, and suppose that 
$p$ is a condition which forces that 
$\xi$ is a limit point of $\dot c_\alpha$. 
Then there is $M \in A_p$ such that $\alpha \in M$ and 
$\sup(M \cap \alpha) = \xi$.
\end{lemma}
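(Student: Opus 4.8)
We argue by contradiction, so suppose that no $M \in A_p$ has $\alpha \in M$ and $\sup(M \cap \alpha) = \xi$. The plan is to extend $p$ to a condition $q$ carrying a triple $\langle \alpha, \gamma, \beta \rangle$ with $\gamma < \xi \le \beta < \alpha$. Then no $r \le q$ can carry a triple $\langle \alpha, \gamma'', \beta'' \rangle$ with $\gamma'' \in (\gamma, \xi)$, since $\gamma'' \in [\gamma, \beta)$ and $\langle \alpha, \gamma, \beta \rangle \in x_q$; hence $q$ forces $\sup(\dot c_\alpha \cap \xi) \le \gamma < \xi$, contradicting that $q$, like $p$, forces $\xi$ to be a limit point of $\dot c_\alpha$. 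Observe first that $x_p$ contains no triple $\langle \alpha, \gamma', \beta' \rangle$ with $\gamma' < \xi \le \beta'$, since $p$ itself would then force $\sup(\dot c_\alpha \cap \xi) \le \gamma' < \xi$. Let $\delta^*$ be the maximum of $0$, of the ordinals $\sup(M \cap \alpha)$ for $M \in A_p$ with $\alpha \in M$ and $\sup(M \cap \alpha) < \xi$, and of the ordinals $\beta'$ appearing in triples $\langle \alpha, \gamma', \beta' \rangle \in x_p$ with $\beta' < \xi$; as this is a finite set of ordinals below $\xi$, $\delta^* < \xi$.

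Suppose first that every $M \in A_p$ with $\alpha \in M$ has $\sup(M \cap \alpha) < \xi$. Fix $\gamma$ with $\delta^* < \gamma < \xi$ and put $\beta := \xi$. Then $q := (x_p \cup \{ \langle \alpha, \gamma, \xi \rangle \}, A_p)$ is a condition: the interval $[\gamma, \xi)$ is disjoint from the intervals of the triples of $x_p$ with first component $\alpha$ (by the choice of $\delta^*$ and the observation above), $\langle \alpha, \gamma, \xi \rangle$ and each $M \in A_p$ are nonoverlapping because $\alpha \in M$ implies $\sup(M \cap \alpha) \le \delta^* < \gamma$, and requirement (4) of Definition~4.1 is vacuous since $\gamma \notin M$ whenever $\alpha \in M \in A_p$, so $\langle \alpha, \gamma, \xi \rangle \notin Sk(M)$ for all $M \in A_p$. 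As $q \le p$ and $q$ performs the capping described above, this is a contradiction.

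Now suppose some $M^0 \in A_p$ has $\alpha \in M^0$ and $\sup(M^0 \cap \alpha) > \xi$. For each $\delta < \xi$ the set of $r \le p$ carrying a triple $\langle \alpha, \gamma, \beta \rangle$ with $\delta < \gamma < \xi$ is dense below $p$, since $p$ forces $\dot c_\alpha$ to meet $(\delta, \xi)$; and for such $r$ and any $M \in A_p$ with $\alpha \in M$ and $\sup(M \cap \alpha) > \xi$, the inequalities $\gamma < \xi < \sup(M \cap \alpha)$ together with the nonoverlapping of $\langle \alpha, \gamma, \beta \rangle$ and $M$ force $\gamma \in M$. So every such $M$ has $M \cap \xi$ cofinal in $\xi$. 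Moreover, if $M, M'$ are two such models then $\alpha \in M \cap M'$, so $\alpha < \beta_{M,M'}$, and intersecting the adequacy dichotomy for $M, M'$ with $\alpha$ shows that one of $M \cap \alpha$, $M' \cap \alpha$ is contained in the other; let $M^-$ be such a model with $\subseteq$-least trace $M^- \cap \alpha$. Choose $\gamma \in M^- \cap (\delta^*, \xi)$ and $\beta := \min(M^- \cap [\xi, \alpha))$ — both exist, the first because $M^- \cap \xi$ is cofinal in $\xi$, the second because $\sup(M^- \cap \alpha) > \xi$ — so that $\gamma < \xi \le \beta < \alpha$. Then $\langle \alpha, \gamma, \beta \rangle$ is nonoverlapping with every triple of $x_p$ (for $\langle \alpha, \gamma', \beta' \rangle \in x_p$ with $\gamma' \ge \xi$, the nonoverlapping of this triple with $M^-$ forces $\beta \le \gamma'$) and with every $M \in A_p$ (if $\alpha \in M$ and $\sup(M \cap \alpha) < \xi$ then $\sup(M \cap \alpha) \le \delta^* < \gamma$, and otherwise $\gamma, \beta \in M^- \cap \alpha \subseteq M$). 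Let $y$ be the closure of $x_p \cup \{ \langle \alpha, \gamma, \beta \rangle \}$ under the isomorphisms $\sigma_{M,M'}$ with $M, M' \in A_p$ isomorphic; by Lemma~4.2 the set $y$ is already closed under those isomorphisms. Since $\gamma, \beta < \alpha < \beta_{M,M'}$ whenever $\alpha \in M \cap M'$, any such isomorphism with $\langle \alpha, \gamma, \beta \rangle \in Sk(M)$ either moves $\alpha$ or fixes $\langle \alpha, \gamma, \beta \rangle$; hence $\langle \alpha, \gamma, \beta \rangle$ is the only triple of $y$ with first component $\alpha$ that is not already in $x_p$, and so $q := (y, A_p)$ performs the same capping, which is again a contradiction.

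The substance is in this last case: checking that $(y, A_p)$ really is a condition, i.e., that the images of $\langle \alpha, \gamma, \beta \rangle$ under the maps $\sigma_{M,M'}$ are pairwise nonoverlapping and nonoverlapping with the models of $A_p$. This is the bookkeeping already present in the proof of Proposition~4.3, carried out via Lemma~4.2 and the comparison-point facts of Section~1; the structural point making a single choice of $\gamma, \beta$ work for all side conditions at once is the $\subseteq$-linear ordering of the traces $M \cap \alpha$ over the models $M \in A_p$ with $\alpha \in M$ and $\sup(M \cap \alpha) > \xi$.
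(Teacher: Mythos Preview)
Your approach is essentially the same as the paper's: argue by contradiction, identify among the $M\in A_p$ with $\alpha\in M$ and $\sup(M\cap\alpha)>\xi$ one with minimal $\omega_1$-trace (equivalently, $\subseteq$-least trace $M\cap\alpha$), choose the capping triple $\langle\alpha,\gamma,\tau\rangle$ with $\gamma\in M\cap\xi$ large and $\tau=\min(M\setminus\xi)$, close under the isomorphisms of $A_p$ via Lemma~4.2, and contradict that $p$ forces $\xi\in\lim(\dot c_\alpha)$. Your explicit Case~1 (no model in $A_p$ reaches above $\xi$ inside $\alpha$) is a clean addition; the paper jumps straight to ``fix $M\in A_2$'' without isolating that degenerate case.

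Where your write-up is too compressed is the final verification that $(y,A_p)$ is a condition. The appeal to ``bookkeeping already present in the proof of Proposition~4.3'' is misleading: that proposition amalgamates two conditions over a fixed $N\in A$, which is a different configuration from closing a single new triple under \emph{all} isomorphisms in $A_p$. The paper carries out the verification directly for this lemma, and its substantive step is not covered by what you wrote: for part~(1) one must show that any two images $\sigma_{N_0,N'}(\langle\alpha,\gamma,\tau\rangle)$ and $\sigma_{N_1,N^*}(\langle\alpha,\gamma,\tau\rangle)$ sharing a first component $\alpha^*$ are actually \emph{equal}, by arguing that the restrictions $\sigma_{N_0,N'}\restriction\alpha$ and $\sigma_{N_1,N^*}\restriction(N_0^*\cap\alpha)$ coincide as the unique order-isomorphism between the relevant traces. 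Your observation that the only new triple with first component $\alpha$ is $\langle\alpha,\gamma,\beta\rangle$ itself is the special case $\alpha^*=\alpha$ of this, and it suffices for the capping conclusion, but not for the condition-check.
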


\begin{proof}
Note that for all $q \le p$, 
since $q$ forces that $\xi$ is a limit point of $\dot c_\alpha$, if 
$\langle \alpha, \gamma, \beta \rangle \in x_q$ and $\gamma < \xi$, 
then $\beta < \xi$. 
Suppose for a contradiction that for all $M \in A_p$, if 
$\alpha \in M$ then $\sup(M \cap \alpha) \ne \xi$. 

We claim that if $M \in A_p$, $\alpha \in M$, and $\sup(M \cap \xi) < \xi$, 
then $\sup(M \cap \alpha) < \xi$. 
Otherwise fix a counterexample $M$. 
Then $\alpha \in M$, $\sup(M \cap \xi) < \xi$, and $\sup(M \cap \alpha) \ge \xi$. 
Since $\xi$ is forced to be a limit point of $\dot c_\alpha$, we can find 
$q \le p$ and $\gamma, \beta < \xi$ such that 
$\langle \alpha, \gamma, \beta \rangle \in x_q$ and 
$\sup(M \cap \xi) < \gamma$. 
Then $\gamma$ and $\beta$ are not in $M$, but 
$\sup(M \cap \alpha) \ge \xi > \gamma$, which contradicts 
that $q$ is a condition.

It follows from the claim that $A$ 
is the union of the sets 
$A_0$, $A_1$, and $A_2$ 
defined by 
$$
A_0 = \{ M \in A_p : \alpha \notin M \},
$$
$$
A_1 = \{ M \in A_p : \alpha \in M, \ \sup(M \cap \alpha) < \xi \},
$$
$$
A_2 = \{ M \in A_p : \alpha \in M, \ \sup(M \cap \xi) = \xi \}.
$$
Since we are assuming that there is no $M$ in $A_p$ with $\alpha \in M$ and 
$\sup(M \cap \alpha) = \xi$, 
any every set in $A_2$ meets the interval $[\xi,\alpha)$. 
Observe that if $N \in A_1$ and $M \in A_2$, then since 
$\alpha \in M \cap N$, $\beta_{M,N} > \alpha$; hence 
$\sup(N \cap \alpha) < \xi < \sup(M \cap \alpha)$ implies that 
$N \cap \beta_{M,N} \in Sk(M)$.

Fix $M$ in $A_2$ such that $M \cap \omega_1$ is minimal. 
Let $\tau = \min(M \setminus \xi)$. 
Then $\xi \le \tau < \alpha$. 
Since $\sup(M \cap \xi) = \xi$, we can 
fix $\gamma < \xi$ in $M$ such that for all $N \in A_1$, 
$\sup(N \cap \alpha) < \gamma$, and 
for all $\langle \alpha,\zeta,\beta \rangle \in x_p$, if $\zeta < \xi$ then 
$\zeta, \beta < \gamma$.

Let $y$ be the set of triples of the form 
$\sigma_{N,N'}(\langle \alpha, \gamma, \tau \rangle)$, where 
$N$ and $N'$ are isomorphic sets in $A_p$ and 
$\langle \alpha, \gamma, \tau \rangle \in Sk(N)$. 
Let $q = (x_p \cup y, A_p)$. 
We claim that $q$ is a condition. 
Then clearly $q \le p$ and $q$ forces that $\xi$ is not a limit point of 
$\dot c_\alpha$, which is a contradiction.

Let us note that $\langle \alpha, \gamma, \tau \rangle$ is nonoverlapping with 
every triple in $x_p$. 
Let $\langle \alpha, \gamma', \beta' \rangle$ be in $x_p$. 
If $\gamma' < \xi$ then $\gamma'$ and $\beta'$ are below $\gamma$, so 
we are done. 
Suppose that $\gamma' \ge \xi$. 
Since $M \in A_p$, either $\gamma'$ and $\beta'$ are in $M$ or 
$\sup(M \cap \alpha) < \gamma'$. 
In the former case, $\tau = \min(M \setminus \xi) \le \gamma'$. 
In the latter case, $\tau < \sup(M \cap \alpha) < \gamma'$. 
In either case, $\tau \le \gamma'$, which implies that 
$[\gamma,\tau) \cap [\gamma',\beta') = \emptyset$.

Next we claim that if $K \in A_p$ then $K$ and 
$\langle \alpha, \gamma, \tau \rangle$ are nonoverlapping. 
If $\alpha$ is not in $K$ then we are done, so assume that $\alpha \in K$. 
Then either $K \in A_1$ or $K \in A_2$. 
If $K \in A_1$, then $\sup(K \cap \alpha) < \gamma$ by the choice of $\gamma$. 
If $K \in A_2$, then since $M \cap \omega_1 \le K \cap \omega_1$, 
either $M \cap \beta_{K,M} \in Sk(K)$ or 
$M \cap \beta_{K,M} = K \cap \beta_{K,M}$. 
In either case, $M \cap \beta_{K,M} \subseteq K$. 
But since $\alpha \in K \cap M$, $\beta_{K,M} > \alpha$. 
So $\gamma$ and $\tau$ are in $K$.

Now we prove that $q$ is a condition.

(1) Consider a triple $\langle \alpha', \gamma', \beta' \rangle$ in $x_p$ and 
a triple $\sigma_{N,N'}(\langle \alpha, \gamma, \tau \rangle)$, where $N$ and 
$N'$ are isomorphic in $A_p$ and 
$\langle \alpha, \gamma, \tau \rangle \in Sk(N)$. 
If $\alpha' \ne \sigma_{N,N'}(\alpha)$ then we are done, so assume that 
$\alpha' = \sigma_{N,N'}(\alpha)$. 
If $\gamma'$ and $\beta'$ are not in $Sk(N')$, 
then $\sup(N' \cap \alpha') < \gamma'$, so clearly the triples are nonoverlapping. 
Otherwise $\gamma'$ and $\beta'$ are both in $Sk(N')$. 
Then $\langle \alpha', \gamma', \beta' \rangle \in x_p \cap Sk(N')$, 
so $\sigma_{N',N}(\langle \alpha',\gamma',\beta' \rangle)$ is in $x_p$. 
By the comments above, 
$\sigma_{N',N}(\langle \alpha',\gamma',\beta' \rangle)$ and 
$\langle \alpha, \gamma, \tau \rangle$ are nonoverlapping. 
Hence the images of these triples under $\sigma_{N,N'}$ 
are nonoverlapping and we are done.

Now consider $\sigma_{N_0,N'}(\langle \alpha, \gamma, \tau \rangle)$ and 
$\sigma_{N_1,N^*}(\langle \alpha, \gamma, \tau \rangle)$, where 
$N_0$ and $N'$ are isomorphic in $A_p$ and 
$\langle \alpha, \gamma, \tau \rangle \in Sk(N_0)$, 
and $N_1$ and $N^*$ are isomorphic in $A_p$ and 
$\langle \alpha, \gamma, \tau \rangle \in Sk(N_1)$. 
If $\sigma_{N_0,N'}(\alpha) \ne \sigma_{N_1,N^*}(\alpha)$ then the 
triples are nonoverlapping, so assume that 
$\alpha^* := \sigma_{N_0,N'}(\alpha) = \sigma_{N_1,N^*}(\alpha)$. 
Then $\beta_{N_0,N_1} > \alpha$ and 
$\beta_{N',N^*} > \alpha^*$.

We will show that 
$\sigma_{N_0,N'}(\langle \alpha, \gamma, \tau \rangle) = 
\sigma_{N_1,N^*}(\langle \alpha, \gamma, \tau \rangle)$. 
By symmetry it suffices to consider the cases when 
$N_0 \cap \beta_{N_0,N_1} \in Sk(N_1)$ and 
$N_0 \cap \beta_{N_0,N_1} = N_1 \cap \beta_{N_0,N_1}$. 
Suppose the former case. 
Then also $N' \cap \beta_{N',N^*} \in Sk(N^*)$. 
Fix $N_0^*$ in $Sk(N_1) \cap A_p$ which is isomorphic to $N_0$ such that 
$N_0 \cap \beta_{N_0,N_1} = N_0^* \cap \beta_{N_0,N_1}$. 
Then $\langle \alpha, \gamma, \tau \rangle \in Sk(N_0^*)$. 
Also fix $P \in Sk(N^*) \cap A_p$ which is isomorphic to $N'$ such that 
$N' \cap \beta_{N',N^*} = P \cap \beta_{N',N^*}$. 
Since $\beta_{N',N^*} > \alpha^*$, $\alpha^* \in P$.

Since $\sigma_{N_1,N^*}(\alpha) = \alpha^*$, 
$\alpha^* \in P \cap \sigma_{N_1,N^*}(N_0^*)$. 
As $P$ and $\sigma_{N_1,N^*}(N_0^*)$ are isomorphic and are 
in the adequate set $A_p$, it follows 
that $P \cap \alpha^* = \sigma_{N_1,N^*}(N_0^*) \cap \alpha^*$. 
Now $\sigma_{N_0,N'} \restriction \alpha$ is the unique order preserving 
map from $N_0 \cap \alpha = N_0^* \cap \alpha$ onto 
$N' \cap \alpha^* = P \cap \alpha^* = \sigma_{N_1,N^*}(N_0^*) \cap \alpha^*$. 
But also $\sigma_{N_1,N^*} \restriction (N_0^* \cap \alpha)$ 
is an order preserving map from $N_0^* \cap \alpha$ onto 
$\sigma_{N_1,N^*}(N_0^*) \cap \alpha^*$. 
It follows that $\sigma_{N_0,N'} \restriction \alpha = 
\sigma_{N_1,N^*} \restriction (N_0^* \cap \alpha)$. 
In particular, $\sigma_{N_0,N'}(\langle \alpha, \gamma, \tau \rangle) = 
\sigma_{N_1,N^*}(\langle \alpha, \gamma, \tau \rangle)$.

Now suppose that 
$N_0 \cap \beta_{N_0,N_1} = N_1 \cap \beta_{N_0,N_1}$. 
Then also $N' \cap \beta_{N',N^*} = N^* \cap \beta_{N',N^*}$. 
In particular, $N_0 \cap \alpha = N_1 \cap \alpha$ and 
$N' \cap \alpha^* = N^* \cap \alpha^*$. 
But $\sigma_{N_0,N'} \restriction \alpha$ is the unique order preserving 
map from $N_0 \cap \alpha$ onto $N' \cap \alpha^*$, and 
$\sigma_{N_1,N^*} \restriction \alpha$ is the unique order preserving 
map from $N_1 \cap \alpha$ onto $N^* \cap \alpha$. 
Hence $\sigma_{N_0,N'} \restriction \alpha = 
\sigma_{N_1,N^*} \restriction \alpha$. 
So $\sigma_{N_0,N'}(\langle \alpha, \gamma, \tau \rangle) = 
\sigma_{N_1,N^*}(\langle \alpha, \gamma, \tau \rangle)$.

(2) is immediate.

(3) Let $K$ be in $A_p$ and consider 
$\langle \alpha^*, \gamma^*, \tau^* \rangle := 
\sigma_{N,N'}(\langle \alpha, \gamma, \tau \rangle)$, where 
$N$ and $N'$ are isomorphic sets in $A_p$ 
and $\langle \alpha, \gamma, \tau \rangle$ is in $Sk(N)$. 
We will prove that $K$ and 
$\langle \alpha^*, \gamma^*, \tau^* \rangle$ 
are nonoverlapping. 
If $\alpha^*$ is not in $K$, then we are done, so assume that 
$\alpha^* \in K$.
Then $\beta_{K,N'} > \alpha^*$.

If $N' \cap \beta_{K,N'}$ is either in $Sk(K)$ or equal to $K \cap \beta_{K,N'}$ 
then $\gamma'$ and $\tau'$ are in $K$ and we are done. 
So assume that $K \cap \beta_{K,N'} \in Sk(N')$. 
Then there is $K^*$ in $Sk(N') \cap A_p$ 
which is isomorphic to $K$ such that 
$K^* \cap \beta_{K,N'} = K \cap \beta_{K,N'}$. 
Since $\alpha^* < \beta_{K,N'}$, it suffices to show that $K^*$ and 
$\langle \alpha^*, \gamma^*, \tau^* \rangle$ are nonoverlapping. 
But $L := \sigma_{N',N}(K^*)$ is in $A_p$, and we showed above that 
$L$ is nonoverlapping with $\langle \alpha, \gamma, \tau \rangle$. 
Therefore the images of $L$ and $\langle \alpha, \gamma, \tau \rangle$ 
under $\sigma_{N,N'}$, namely $K^*$ and 
$\langle \alpha^*, \gamma^*, \tau^* \rangle$, are nonoverlapping.

(4) By Lemma 4.2 it suffices to show that 
$x_p \cup y$ is equal to 
$$
x_p^* \cup 
\{ \sigma_{N,N'}(a) : N, N' \in A_p, \ 
N \cap \omega_1 = N' \cap \omega_1, \ 
a \in x_p^* \cap Sk(N) \},
$$
where $x_p^* = x_p \cup \{ \langle \alpha, \gamma, \tau \rangle \}$. 
Clearly $x_p \cup y$ is included in the second set by definition, and 
$x_p^* \subseteq x_p \cup y$. 
Consider $a \in x_p \cup \{ \langle \alpha, \gamma, \tau \rangle \}$ and isomorphic 
$N$ and $N'$ in $A_p$ with $a \in Sk(N)$. 
If $a \in x_p$ then $\sigma_{N,N'}(a) \in x_p$ since $p$ is a condition. 
Otherwise $a = \langle \alpha, \gamma, \beta \rangle$, and 
$\sigma_{N,N'}(a) \in y$ by the definition of $y$.
\end{proof}

\begin{proposition}
Let $\alpha$ and $\alpha'$ be distinct ordinals in $\Lambda$. 
Then $\p$ forces that whenever $\xi$ is a common limit point of 
$\dot c_\alpha$ and $\dot c_{\alpha'}$, 
$\dot c_\alpha \cap \xi = \dot c_{\alpha'} \cap \xi$.
\end{proposition}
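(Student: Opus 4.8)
The plan is to argue by contradiction. Suppose some condition forces that $\xi$ is a common limit point of $\dot c_\alpha$ and $\dot c_{\alpha'}$ while $\dot c_\alpha \cap \xi \ne \dot c_{\alpha'} \cap \xi$. By the symmetry of the conclusion in $\alpha$ and $\alpha'$, we may fix a condition $r$ below it, an ordinal $\gamma < \xi$, and an extension of $r$ (still called $r$) with $\langle \alpha, \gamma, \beta \rangle \in x_r$ for some $\beta$, such that $r$ forces $\gamma \notin \dot c_{\alpha'}$. Exactly as in the proof of Lemma 4.6, since $r$ forces that $\xi$ is a limit point of $\dot c_\alpha$ and $\gamma < \xi$, we must have $\beta < \xi$. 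Applying Lemma 4.6 to $r$ twice, fix $M \in A_r$ with $\alpha \in M$ and $\sup(M \cap \alpha) = \xi$, and $M' \in A_r$ with $\alpha' \in M'$ and $\sup(M' \cap \alpha') = \xi$. Since $\langle \alpha, \gamma, \beta \rangle$ and $M$ are nonoverlapping and $\sup(M \cap \alpha) = \xi > \gamma$, we get $\gamma, \beta \in M$, so $\langle \alpha, \gamma, \beta \rangle \in Sk(M) \cap x_r$; and since $\xi \in (M \cup \lim(M)) \cap (M' \cup \lim(M'))$, the property $(M \cup \lim(M)) \cap (M' \cup \lim(M')) \subseteq \beta_{M,M'}$ gives $\xi < \beta_{M,M'}$. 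Note also that $\alpha' = \min(M' \setminus \xi)$ when $\xi \notin M'$, and $\alpha' = \min(M' \setminus (\xi+1))$ when $\xi \in M'$, and symmetrically for $\alpha$ and $M$.

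The goal is to produce, after extending $r$ if necessary, a condition $s \le r$ with a set $M^\dagger \in A_s$ which is isomorphic to $M$, satisfies $\sigma_{M,M^\dagger}(\alpha) = \alpha'$, and has $\beta_{M,M^\dagger} > \xi$. Granting this, $\sigma_{M,M^\dagger}$ is the identity on $Sk(M \cap M^\dagger) = Sk(M \cap \beta_{M,M^\dagger})$ by Lemma 2.3 (since $M$ and $M^\dagger$ are isomorphic they have the same intersection with $\omega_1$, hence $M \cap \beta_{M,M^\dagger} = M^\dagger \cap \beta_{M,M^\dagger}$), and $\gamma, \beta \in M \cap \beta_{M,M^\dagger}$ because $\gamma, \beta < \xi < \beta_{M,M^\dagger}$. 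Therefore requirement (4) in Definition 4.1, applied to the isomorphic pair $M, M^\dagger$ in $A_s$ and the triple $\langle \alpha, \gamma, \beta \rangle \in Sk(M) \cap x_s$, forces $\sigma_{M,M^\dagger}(\langle \alpha, \gamma, \beta \rangle) = \langle \alpha', \gamma, \beta \rangle$ into $x_s$. Thus $s$ forces $\gamma \in \dot c_{\alpha'}$, contradicting $s \le r$ and $r \Vdash \gamma \notin \dot c_{\alpha'}$.

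It remains to produce $M^\dagger$, and this is the crux. If $M$ and $M'$ are already isomorphic (equivalently $M \cap \omega_1 = M' \cap \omega_1$) one may take $M^\dagger := M'$: since $\sigma_{M,M'}$ fixes $M \cap \beta_{M,M'}$, which contains the cofinal-in-$\xi$ set $M \cap \alpha$, the image $\sigma_{M,M'}[M \cap \alpha]$ is again cofinal in $\xi$ and equals $M' \cap \sigma_{M,M'}(\alpha)$, which together with the description of $\alpha'$ above forces $\sigma_{M,M'}(\alpha) = \alpha'$. When $M$ and $M'$ are not isomorphic, one uses coherence (clause (2) of Definition 2.4 together with Lemmas 2.1, 2.5, 2.6) to replace one of $M, M'$ by an isomorphic copy lying inside the Skolem hull of the other, reducing to the isomorphic situation; alternatively, one extends $r$ by adjoining a fresh isomorphic copy $M^\dagger$ of $M$ chosen to agree with $M$ below $\alpha$ (so $\sigma_{M,M^\dagger}$ fixes $M \cap \alpha$ and $\beta_{M,M^\dagger} \ge \alpha > \xi$) and to have $\alpha'$ in the position of $\alpha$, and checks that the pair obtained by closing $A_r \cup \{M^\dagger\}$ under the coherence requirements and $x_r$ under requirement (4) is a condition. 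The delicate step in either route — and the actual content of the proposition — is that the triples newly put into $x$ by requirement (4), in particular $\langle \alpha', \gamma, \beta \rangle$, remain pairwise nonoverlapping; this reduces to showing that $x_r$ contains no triple $\langle \alpha', \gamma'', \beta'' \rangle$ with $\gamma'' < \gamma < \beta''$, which is where one must genuinely invoke that $\xi$ is a common limit point of $\dot c_\alpha$ and $\dot c_{\alpha'}$ and use the analysis already carried out in Lemma 4.6.
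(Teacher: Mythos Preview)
Your setup is right, and in the case where $M$ and $M'$ are isomorphic your argument coincides with the paper's: property~(4) of Definition~4.1 applied to $M,M'$ and the triple $\langle \alpha,\gamma,\beta\rangle$ immediately gives $\langle \alpha',\gamma,\beta\rangle\in x_r$, contradicting $r\Vdash\gamma\notin\dot c_{\alpha'}$.

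The gap is that you treat ``$M$ and $M'$ not isomorphic'' as a genuine case and then leave it essentially unproved. In fact this case never occurs, and seeing why is the whole point. Since $\alpha'\in\Lambda$ is a limit ordinal and $\sup(M'\cap\alpha')=\xi$, one cannot have $\xi\in M'$ (otherwise $\xi+1\in M'\cap\alpha'$, contradicting $\sup(M'\cap\alpha')=\xi$); similarly $\xi\notin M$. Now $\xi$ is a limit point of $M$ and of $M'$, and $\xi<\beta_{M,M'}$. If $M\cap\beta_{M,M'}\in Sk(M')$, then $\xi$, being a limit point of the countable set $M\cap\beta_{M,M'}\in Sk(M')$, would lie in $Sk(M')\cap\omega_2=M'$, a contradiction; symmetrically $M'\cap\beta_{M,M'}\notin Sk(M)$. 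Hence $M\cap\beta_{M,M'}=M'\cap\beta_{M,M'}$, so $M$ and $M'$ are isomorphic. No extension of $r$, no new $M^\dagger$, and no verification of nonoverlapping for new triples is needed.

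Your proposed workaround in the non-isomorphic case is also not correct as written: replacing $M$ by an isomorphic copy $M^*\in Sk(M')$ via Lemma~2.5 does not ``reduce to the isomorphic situation,'' since $M^*\cap\omega_1=M\cap\omega_1\ne M'\cap\omega_1$, so $M^*$ and $M'$ remain non-isomorphic. And the alternative of adjoining a fresh $M^\dagger$ would require verifying all four clauses of Definition~4.1 for the enlarged pair, a construction on the order of Propositions~3.5--3.6 that you do not carry out.
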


\begin{proof}
Let $p$ be a condition which forces that $\xi$ is a common limit point 
of $\dot c_\alpha$ and $\dot c_{\alpha'}$. 
Then by the previous lemma, there are $M$ and $M'$ in $A_p$ such that 
$\alpha \in M$ and $\sup(M \cap \alpha) = \xi$, and 
$\alpha' \in M'$ and $\sup(M' \cap \alpha') = \xi$. 
Since $\xi$ is a common limit point of $M$ and $M'$, 
$\xi < \beta_{M,M'}$. 
It is not possible that $M \cap \beta_{M,M'} \in Sk(M')$, since 
in that case $\xi$, which is a limit point of $M \cap \beta_{M,M'}$, would be in $M'$. 
Similarly, $M' \cap \beta_{M,M'}$ is not in $Sk(M)$. 
So $M \cap \beta_{M,M'} = M' \cap \beta_{M,M'}$.
It follows that $M$ and $M'$ are isomorphic. 
Also $\sigma_{M,M'} \restriction M \cap \beta_{M,M'}$ is the identity 
and $\sigma_{M,M'}(\alpha) = \alpha'$. 

Suppose that $q \le p$ and $q$ forces that $\gamma$ is in $\dot c_\alpha \cap \xi$. 
Extending $q$ if necessary, assume that 
$\langle \alpha, \gamma, \beta \rangle \in x_q$ 
for some $\beta$. 
Since $\gamma < \xi = \sup(M \cap \alpha)$, $\gamma$ and $\beta$ are in $M$. 
So $\sigma_{M,M'}(\langle \alpha, \gamma, \beta \rangle) = 
\langle \alpha', \gamma, \beta \rangle$ is in $x_q$. 
Hence $q$ forces that $\gamma$ is in $\dot c_{\alpha'}$. 
This proves that $p$ forces that $\dot c_\alpha \cap \xi \subseteq \dot c_{\alpha'}$. 
The other inclusion is proved using a symmetric argument.
\end{proof}

Let us show that $\Box_{\omega_1}$ holds in any generic extension by $\p$. 
This follows from well-known arguments which we review for completeness. 
First note that it suffices to find a sequence 
$\langle d_\alpha : \alpha \in \omega_2 \cap \cof(\omega_1) \rangle$ 
such that each $d_\alpha$ is a club subset of $\alpha$ with order 
type $\omega_1$, and for any $\alpha < \alpha'$ and $\xi$ a common limit 
point of $d_\alpha$ and $d_{\alpha'}$, 
$d_\alpha \cap \xi = d_{\alpha'} \cap \xi$. 
For then we can extend this sequence to a square sequence by defining 
$d_\gamma$ for $\gamma \in \omega_2 \cap \cof(\omega)$ by letting 
$d_\gamma = d_\alpha \cap \gamma$ for some (any) $\alpha$ in 
$\omega_2 \cap \cof(\omega_1)$ such that $\gamma$ is a limit point of 
$d_\alpha$, and if no such $\alpha$ exist, letting 
$d_\gamma$ be a cofinal subset 
of $\gamma$ of order type $\omega$.

Recall that each $\alpha$ in $\Lambda$ is in $C^* \cap \cof(\omega_1)$ and 
is a limit point of $C^*$. 
For each $\alpha \in \Lambda$ let 
$d_\alpha = \lim(c_\alpha) \cap C^* \cap \alpha$. 
Then by Lemma 4.5 and Proposition 4.7, the sequence 
$\langle d_\alpha : \alpha \in \Lambda \rangle$ satisfies that each 
$d_\alpha$ is a club subset of $\alpha$ with order type $\omega_1$, and for all 
$\xi$ in $d_\alpha \cap d_{\alpha'}$, 
$d_\alpha \cap \xi = d_{\alpha'} \cap \xi$.

One can easily prove by induction that for any $\xi < \omega_2$, 
there exists a sequence $\langle e_\beta : \beta \in \xi \cap \cof(\omega_1) \rangle$ 
such that each $e_\beta$ is a club subset of $\beta$ of order type $\omega_1$ 
and any $e_\beta$ and $e_{\beta'}$ share no common limit points. 
Consider $\beta_0 < \beta_1$ which are 
consecutive elements of $C^* \cup \{ 0 \}$. 
Using the fact just mentioned, we can transfer a sequence of clubs defined 
on $\ot(\beta_1 \setminus \beta_0) \cap \cof(\omega_1)$ to a sequence 
$\langle d_\alpha : \alpha \in (\beta_0,\beta_1) \cap 
\cof(\omega_1) \rangle$ so that each $d_\alpha$ is a club subset of $\alpha$ 
with minimum element greater than $\beta_0$ and order type $\omega_1$, 
such that any $d_{\alpha}$ and $d_{\alpha'}$ share no common limit points. 
But any ordinal in $\omega_2 \cap \cof(\omega_1)$ which is not in $C^*$ 
belongs to such an interval. 
So we have defined $d_\alpha$ for all $\alpha \in \omega_2 \cap \cof(\omega_1)$. 
It is straightforward to check that the extended sequence is as required.

\bibliographystyle{plain}
\bibliography{paper23}

\begin{thebibliography}{1}

\bibitem{cummings}
U.~Abraham and J.~Cummings.
\newblock More results in polychromatic {R}amsey theory.
\newblock {\em Cent. Eur. J. Math.}, 10(3):1014--1016, 2012.

\bibitem{mirna}
M.~Dzamonja and G.~Dolinar.
\newblock Forcing {$\Box_{\omega_1}$} with finite conditions.
\newblock {\em Ann. Pure Appl. Logic}, 164(1):49--64, 2013.

\bibitem{krueger1}
J.~Krueger.
\newblock Forcing with adequate sets of models as side conditions.
\newblock To appear.

\bibitem{krueger2}
J.~Krueger.
\newblock Strongly adequate sets and adding a club with finite conditions.
\newblock To appear.

\bibitem{mitchell}
W.~Mitchell.
\newblock {$I[\omega_2]$} can be the nonstationary ideal on {${\rm
  Cof}(\omega_1)$}.
\newblock {\em Trans. Amer. Math. Soc.}, 361(2):561–601, 2009.

\bibitem{todor1}
S.~{Todor\v cevi\' c}.
\newblock A note on the proper forcing axiom.
\newblock In {\em Axiomatic set theory (Boulder, Colo., 1983)}, volume~31 of
  {\em Contemp. Math.}, pages 209--218. Amer. Math. Soc., Providence, RI, 1984.

\end{thebibliography}

\end{document}